\newtheorem{theorem}{Theorem}[section]
\newtheorem{lemma}[theorem]{Lemma}
\newtheorem{assumption}[theorem]{Assumption}
\newtheorem{definition}[theorem]{Definition}
\newtheorem{remark}[theorem]{Remark}
\def\<#1,#2>{\langle #1,#2\rangle}
\newcommand{\bg}{B}
\DeclareMathOperator{\range}{range}
\newcommand{\R}{\mathbb{R}}
\newcommand{\E}{\mathbb{E}}
\newcommand{\D}{\mathbb{D}}
\newcommand{\mA}{{\bf A}}
\newcommand{\cC}{{\mathcal{C}}}
\renewcommand{\epsilon}{\varepsilon}
\newcommand{\norm}[1]{\left\| #1 \right\|}
\newcommand{\sqn}[1]{\left\|#1\right\|^2}
\newcommand{\angles}[1]{\left\langle #1 \right\rangle}
\newcommand{\braces}[1]{\left\{ #1 \right\}}
\newcommand{\ovl}[1]{\overline{#1}}
\author{%
	Dmitriy Metelev \\
	Moscow Institute of Physics and Technology \\
	\texttt{dimik701@gmail.com} \\
	\And
	Alexander Rogozin \\
	Moscow Institute of Physics and Technology \\
	\texttt{aleksandr.rogozin@phystech.edu} \\
	\AND
	Alexander Gasnikov \\
	Moscow Institute of Physics and Technology \\
	\texttt{gasnikov@yandex.ru} \\
	\And
	Dmitry Kovalev \\
	King Abdullah University of Science\\ and Technology \\
	\texttt{dmitry.kovalev@kaust.edu.sa} \\
}
\newcommand{\ar}[1]{\textcolor{black}{#1}} 
\begin{document}
    \title{Decentralized Saddle-Point Problems with Different Constants of Strong Convexity and Strong Concavity}

    \author{
        Dmitriy Metelev\textsuperscript{1}\thanks{CONTACT Dmitriy Metelev. Email:    dimik701@gmail.com}, Alexander Rogozin\textsuperscript{1,2}, Alexander Gasnikov\textsuperscript{1,2,3}, Dmitry Kovalev\textsuperscript{4}\\
        \textsuperscript{1}Moscow Institute of Physics and Technology, Russia\\ \textsuperscript{2}HSE University, Russia\\
        \textsuperscript{3}ISP RAS Research Center for Trusted Artificial Intelligence, Russia\\
        \textsuperscript{4}King Abdullah University of Science and Technology, Saudi Arabia
    }
    
    \maketitle
    
    \begin{abstract}
        Large-scale saddle-point problems arise in such machine learning tasks as GANs and linear models with  affine constraints. In this paper, we study distributed saddle-point problems (SPP) with strongly-convex-strongly-concave smooth objectives that have different strong convexity and strong concavity parameters of composite terms, which correspond to $min$ and $max$ variables, and bilinear saddle-point part. \ar{We consider two types of first-order oracles: deterministic (returns gradient) and stochastic (returns unbiased stochastic gradient)}. Our method works in both cases and takes several consensus steps between oracle calls.
    \end{abstract}

    \textbf{Keywords:} Decentralized optimization; time-varying graphs; saddle-point problem; stochastic optimization; consensus subroutine; inexact oracle

    \section{Introduction}
    
    In this paper we study saddle-point problems (SPP) \ar{with two composite terms and a bilinear part}
    \begin{equation}\label{problem_main}
        \min_{x \in \R^{d_x}}\max_{y \in \R^{d_y}}F(x, y) = f(x) + y^\top \mA x - g(y),
    \end{equation}
    where function $f$ is $\mu_x$-strongly convex ($\mu_x>0$) and $L_x$-smooth and function $g$ is $\mu_y$-strongly convex ($\mu_y>0$) and $L_y$-smooth. The interest to this class of problems has grown in the last few years \cite{zhang2019lower,alkousa2020accelerated,lin2020near,wang2020improved,tominin2021accelerated,kovalev2022first} due to the general growth of interest in SPP in ML community. \ar{The lower complexity bound obtained in \cite{zhang2019lower}:}
    \begin{equation}\label{rate}
        \mathcal{O}\left( \left( \sqrt{\frac{L_x}{\mu_x}} + \sqrt{\frac{\lambda_{max}(A^TA)}{\mu_x\mu_y}} + \sqrt{\frac{L_y}{\mu_y}} \right)\log{\frac1\varepsilon} \right).
    \end{equation}
    \ar{Optimal methods which work according to the lower bound were} obtained independently and almost at the same time in \cite{minimax2112,similar_determ2201,jin2022sharper}. For the case $\mu_x = \mu_y$, $L_x = L_y$ the lower bound and optimal methods were known much earlier \cite{nemirovski1983problem,tseng2000modified,lan2020first}. 
    
    \ar{The case of} stochastic oracle, in which we have access to unbiased stochastic gradients of $f$ and $g$, is studied much less in the literature. For example, in \cite{couplin_term2111} a non-bilinear SPP was considered and composite terms $f$, $g$ were assumed to be proximal-friendly. We study a general case when $f$ and $g$ may  be not proximal-friendly.
    
    The problem \eqref{problem_main} often arises in decentralized optimization \cite{boyd2011distributed,yarmoshik2022decentralized} and can be considered as a particular case (when $g \equiv 0$) of decentralized convex optimization problems with affine constraints \cite{decent_review2011}. It is obvious that in such applications we have $L_y \gg \mu_x = \mu_y \sim \varepsilon$, where $\varepsilon$ is a desired accuracy in duality gap.\footnote{Strictly speaking, we must put $\mu_y = 0$, since $g\equiv 0$. But without limiting generality, we can consider $\mu_y$ to be as small as $\varepsilon$ due to the regularization trick \cite{wang2020improved,rogozin2021decentralized}.} The state-of-the-art results for decentralized SPP \eqref{problem_main} \ar{proposed in papers \cite{beznosikov2020distributed,rogozin2021decentralized,kovalev2022optimal,LuoDecentrEXtra} (both for deterministic and stochastic oracles)} require $\mu_x = \mu_y$, $L_x = L_y$.

    The contributions of this paper are summarized as follows.
    \begin{enumerate} 
        \item (Sensitivity) we generalize \eqref{rate} in the case when oracle returns inexact gradients of $f$ and $g$ \cite{First-Order-Methods-of-Smooth-Convex-Optimization-with-Inexact-Oracle};
        \item (Stochasticity) by using sensitivity analysis and standard batch-technique (e.g. see \cite{gasnikov2022power}) we generalize \eqref{rate} in the case when oracle returns stochastic gradients of $f$ and $g$ (with different variances);
        \item (Decentralization) by using consensus-projection trick from \cite{decentr2009,stoch_decentr2103} and sensitivity analysis we generalize stochastic version of \eqref{problem_main} for decentralized setup.
    \end{enumerate}
    Note that we could try to implement the plan above starting with arbitrary optimal method from \cite{minimax2112,similar_determ2201,jin2022sharper} that has complexity bound \eqref{rate}. However, we are definitely preferred \cite{minimax2112}, because the results of \cite{minimax2112} include also the situation when $\mu_x = \mu_y =0$, but we still have a linear rate of convergence \cite{ibrahim2020linear,alkousa2020accelerated}. This gives us a stochastic generalization in non-convex-non-concave setup. But at the same time we can only get results for the strongly-convex-strongly-concave setup in decentralized case, it will be discussed in more detail below. 
    
    The \textit{drawbacks} of the proposed approach are as follows:
    \begin{itemize}
        \item (Lack of overparametrization) Based on the proposed batch-technique we do not know how to replace variance determined on whole space to the variance determined only in the solution, see \cite{gorbunov2021stochastic,beznosikov2022stochastic} for $\mu_x = \mu_y$, $L_x = L_y$ and non-distributed setup.
        \item (Extra logarithmic multiplier) Consensus-projection procedure leads to the \ar{addition} of an extra logarithmic multiplier (on a desired accuracy) in comparison with direct approaches, which was clearly demonstrated in the case $\mu_x = \mu_y$, $L_x = L_y$ in \cite{beznosikov2021near} and \cite{kovalev2022optimal}.
    \end{itemize}
    The \textit{advantages} of the proposed approach are as follows:
    \begin{itemize}
        \item (Universality) The idea to propose general scheme that allows to build optimal decentralized stochastic methods based on non-distributed deterministic ones seems to be quite attractive \cite{gasnikov2022power}. But for the moment this was done only for standard Nesterov's accelerated (momentum) method \cite{stoch_decentr2103}. The acceleration from \cite{minimax2112} is much more difficult\footnote{It seems that \cite{minimax2112} is the most tricky approach among \cite{minimax2112,similar_determ2201,jin2022sharper}, which use significantly new ideas of acceleration rather than standard ones \cite{nesterov2018lectures,lin2020accelerated}.} So the starting point of the plan (sensitivity analysis) required significant generalization of the results from \cite{First-Order-Methods-of-Smooth-Convex-Optimization-with-Inexact-Oracle}, which was used in \cite{stoch_decentr2103}. The results obtained in sensitivity part of this paper building a bridge to much wider class of optimal modern non-distributed non-stochastic methods. It is very important to note that proposed scheme preserve optimality (up to a logarithmic factor) of input method for output one (see \cite{rogozin2021decentralized} for the lower bound in deterministic setup).   
        \item (Average constants) As well as in the works \cite{decentr2009,stoch_decentr2103,beznosikov2021near} the complexity bounds (communication steps, oracle calls) determined by the average (among all the nodes) smoothness, strong convexity (concavity) and variance constants rather than the worse ones, which is typical for any other (which do not use dual oracle) approaches \cite{decent_review2011}.
        \item (Time-varying networks) As well as in the works \cite{decentr2009,stoch_decentr2103,beznosikov2021near} our results can be easily generalized on time-varying networks, rather than almost optimal loop-less approaches that are much more tricky and developed at the moment only for optimization problems \cite{kovalev2021adom,kovalev2021lower,li2021accelerated,song2021optimal}.
    \end{itemize}
    
    An alternative approach that could be used is based on decentralized Catalyst envelope \cite{tian2021acceleration}. In this approach we could build an optimal method for SPP \eqref{problem_main} based on decentralized accelerated method for optimization problem formulation \cite{kovalev2020optimal,kovalev2021adom,kovalev2021lower,li2021accelerated,song2021optimal}. We could try to build an optimal (up to a logarithmic factors) method in the same way as it was done in non-distributed setup \cite{lin2020near,tominin2021accelerated}. Unfortunately, this approach: a) can deal only with deterministic oracles;  b) is characterized by the worst-case constants (not the average ones); c) leads to the third degree (at least) of the logarithmic factor \cite{lin2020near,tominin2021accelerated} that is worse than in described above approach.\footnote{May be this drawback could be eliminated over time, like it was done in \cite{kovalev2022first} for non-distributed setup.} 
    
    Prior to our paper, we were not aware of papers that considered decentralized strongly-convex-strongly-concave SPP with different constants of strong convexity and strong concavity, even in deterministic case. The most competitive paper to ours is \cite{kovalev2022optimal} which considers equal constants of strong convexity and strong concavity. Therefore, the result of \cite{kovalev2022optimal} would contain $1/\min\{\mu_x,\mu_y\}$, whereas our result is $1/\sqrt{\mu_x\mu_y}$, which may be much better.

    \subsection{Basic definitions and assumptions}
    \noindent
    
    
    
    The paper is mainly focused on the decentralized saddle point problems. Namely, we aim at solving
    \begin{align}\label{eq:spp_sum_type}
        \min_{x\in\R^{d_x}} \max_{y\in\R^{d_y}} \frac{1}{n} \sum_{i=1}^n f_i(x) + y^\top\mA x - g_i(y).
    \end{align}
    We assume local functions $f_i$ and $g_i$ to be strongly convex and smooth.
    \ar{
    \begin{definition}
        Function $h(x):~ \R^d\to\R$ is called $L$-smooth if for any $x, y\in\R^d$ it holds
        \begin{align*}
            \norm{\nabla h(y) - \nabla h(x)}\leq L\norm{y - x}.
        \end{align*}
    \end{definition}
    \begin{definition}
        Function $h(x):~ \R^d\to\R$ is called $\mu$-strongly convex if for any $x, y\in\R^d$ it holds
        \begin{align*}
            h(y)\geq h(x) + \angles{\nabla h(x), y - x} + \frac{\mu}{2}\norm{y - x}^2.
        \end{align*}
    \end{definition}
    }
    \begin{assumption}
        Function $f_i(x):\R^{d_x} \rightarrow \R$ is $\mu_{x,i}$-strongly convex and $L_{x,i}$-smooth function, $\mu_{x,i} \geq 0$, $L_{x,i} > 0$. There exists $i$ such that $\mu_{x,i}>0$.
    \end{assumption}
    
    \begin{assumption}
        Function $g_i(y):\R^{d_y} \rightarrow \R$ is $\mu_{y,i}$-strongly convex and $L_{y,i}$-smooth function, $\mu_{y,i} \geq 0$, $L_{y,i} > 0$. There exists $i$ such that $\mu_{y,i}>0$.
    \end{assumption}
    
    Every agent's oracle has access only to stochastic gradients of $f_i(x)$ and $g_i(y)$, we denote them as $\nabla f_i(x, \xi_x)$ and $\nabla g_i(y, \xi_y)$ correspondingly, where $\xi_x$ and $\xi_y$ are random variables.
    
    \begin{assumption}\label{ass:3}
        For each i there exists $\sigma_{f, i}^2$ such that
        \begin{equation*}
            \E_{\xi_x}\nabla f_i(x, \xi_x) = \nabla f_i(x),~
            \E_{\xi_x}\sqn{\nabla f_i(x, \xi_x)-\nabla f_i(x)} \leq \sigma_{f, i}^2.
        \end{equation*}
    \end{assumption}
    
    \begin{assumption}\label{ass:4}
        For each i there exists $\sigma_{g, i}^2$ such that
        \begin{equation*}
            \E_{\xi_y}\nabla g_i(y, \xi_y) = \nabla g_i(y),~
            \E_{\xi_y}\sqn{\nabla g_i(y, \xi_y)-\nabla g_i(y)} \leq \sigma_{g, i}^2.
        \end{equation*}
    \end{assumption}
    
    \begin{assumption}\label{ass:5}
        There exist constants $L_{xy} > 0$, $\mu_{xy}, \mu_{yx} \geq 0$ such that
        \begin{align*}
            L_{xy}^2 &\geq \lambda_{max}\left(\mA^T\mA \right)=\lambda_{max}\left( \mA \mA^T \right)\label{as:1}, \\
            \mu_{xy}^2 &\leq
            \begin{cases}
                \lambda_{min}^+(\mA\mA^\top), &\text{if }~\nabla g(y, \xi) \in \range\mA \text{ for all $\xi$ and y $\in \R^{d_y}$} \\ 
                \lambda_{min}(\mA\mA^\top), &\text{otherwise}
            \end{cases} \\
            \mu_{yx}^2 &\leq
            \begin{cases}
                \lambda_{min}^+(\mA^\top\mA), &\text{if~ $\nabla f(x, \xi) \in \range\mA$ for all $\xi$ and x $\in \R^{d_x}$} \\ 
                \lambda_{min}(\mA^\top\mA), &\text{otherwise}
            \end{cases}
        \end{align*}
        
        where $\lambda_{min}(\cdot)$,  $\lambda_{min}^+(\cdot)$ and  $\lambda_{max}(\cdot)$ denote the smallest, smallest positive and largest eigenvalue of a matrix, respectively, and $\range(\cdot)$ denotes the range space of a matrix.
    \end{assumption}
    
    
    
    Each node holds its own copy of global variables $x$ and $y$, and we introduce matrices $X=[x_1 x_2 \dots x_n]\in \R^{d_x \times n}$ and $Y=[y_1 y_2 \dots y_n]\in \R^{d_y\times n}$. We also denote $\ovl{x} = \frac{1}{n}\sum_{i=1}^n x_i$, $\ovl{y} = \frac{1}{n}\sum_{i=1}^n y_i$ and $\ovl{X} = (\ovl{x} \dots \ovl{x})\in \R^{d_x \times n}$, $\ovl{Y} = (\ovl{y} \dots \ovl{y})\in \R^{d_y \times n}$.
    
    Introduce functions
    \begin{equation}
        F(X) = \sum_{i=1}^n f_i(x_i),~
        G(Y) = \sum_{i=1}^n g_i(y_i).
    \end{equation}
    
    \begin{definition}
        Let $\mathcal{S}$ be a nonempty set of solutions of saddle-point problem. Then we call a pair of vectors $(x, y)$ an $\epsilon$-solution to SPP \eqref{eq:spp_sum_type} for given accuracy $\epsilon > 0$ if it satisfies
        \begin{equation}
            \min_{(x^*, y^*) \in \mathcal{S}} \max \{\sqn{x-x^*}, \sqn{y-y^*}\} \leq \epsilon.
        \end{equation}
    \end{definition}
    
    The complexity of the algorithm is comprised of the number of communications between nodes and the number of computations of matrix-vector products $\mA x$,  $y^\top \mA$ and stochastic first-order oracle calls $\nabla f(x, \xi)$, $\nabla g(y, \xi)$.
    
    \begin{remark}
        Our analysis provides an algorithm whose complexity linearly depends on global parameters 
        \begin{align*}
            L_{x} &= \frac{1}{n}\sum_{i=1}^n L_{x, i},~ \mu_{x} = \frac{1}{n}\sum_{i=1}^n \mu_{x, i},~ \sigma_f^2 = \frac{1}{n}\sum_{i=1}^n \sigma_{f, i}^2, \\
            L_{y} &= \frac{1}{n}\sum_{i=1}^n L_{y, i},~ \mu_{y} = \frac{1}{n}\sum_{i=1}^n \mu_{y, i},~ \sigma_g^2 = \frac{1}{n}\sum_{i=1}^n \sigma_{g, i}^2
        \end{align*}
        Local parameters are defined as $L_{lx} = \max_i\{L_{x, i}\}$, $\mu_{lx} = \min_i\{\mu_{x, i}\}$, $L_{ly} = \max_i\{L_{y, i}\}$, $\mu_{ly} = \min_i\{\mu_{y, i}\}$.
    \end{remark}

    \subsection{Main idea: approximation of non-distributed algorithm via consensus subroutine}
    \noindent
    
    Problem~\eqref{eq:spp_sum_type} can be written as
    \begin{equation}\label{problem:2}
        \min_{X \in \mathcal{C}_{d_x}}\max_{Y \in \mathcal{C}_{d_y}} \frac{1}{n}\sum_{i=1}^n f_i(x_i) + y_i^\top \mA x_i - g_i(y_i),
    \end{equation}
    where 
    \begin{center}
    $\mathcal{C}_{d_x} = \braces{X\in\R^{d_x \times n}:~ x_1 = \ldots = x_n}$ and $\cC_{d_y} = \braces{Y\in\R^{d_y \times n}:~ y_1 = \ldots = y_n}$.
    \end{center}
    
    We want to expand the existing algorithm to the decentralized case of finite sum SPP, and it would be very convenient if we could pass the information about gradients from every node to a single center, average the gradients and pass them back. It would be the same if we made one gradient iteration and averaged variables at each node. Therefore, the main idea of consensus is to make this "averaging" as precise as possible. The only thing we can do is to transmit the information to neighbors. There is a number of solutions in the literature of how to make this "averaging", which often translate to the properties, provided in the "Consensus" section. The relevance of such properties is discussed in more detail in "Consensus" section in \cite{decentr2009}.
    
    So we want to make our "averaging" as close as possible to the ideal one, but there are always "inexactness" in equality of node values, which is expressed by the distance from $Z$ to $C_{d_z}$ subspace. Hence, we need the Lemma \ref{l1} which helps us to transform the "averaging inexactness" to $\delta$ constant in the definition of inexact oracle which introduced in the section below. Therefore, adjusting the number of consensus iterations, we bound the $\delta$ constant.
    
    We use a fixed number of consensus iterations, so to get the theoretical guarantees that $\delta$ constant would be sufficiently small during the algorithm. We need to prove that the possible "inexactness" after a gradient iteration is bounded and that this bound can be expressed polynomially through the initial constants.
    Since we work in a stochastic setup, we can only get the bound for expectation of "inexactness" after gradient iteration, therefore get the bound for expectation of "inexactness" after consensus, which guarantees us the bound in expectation of $\delta$ constant.
    
    To sum up, implementation of this plan requires sensitive and stochastic (in stochastic decentralized setup) analysis of the algorithm used in decentralized case.
    
    In our case in particular, we take Algorithm 1 from \cite{minimax2112} as a basis for our decentralized algorithm. For this purpose, we need to expand results from \cite{minimax2112} on the case of stochastic inexact oracle.
    
    \section{Inexact oracle framework}
    \subsection{Preliminaries}
    We will use the definition of $(\delta, L, \mu)$-oracle. Let $h(x)$ be a convex function defined on a convex set $Q\subseteq \mathbb{R}^m$. We say that $(h_{\delta, \mu, L}(x), s_{\delta, \mu, L}(x))$ is a $(\delta, L, \mu)$-model of $h(x)$ at  point $x\in Q$ if for all $y \in Q$ holds
    
    $$\frac{\mu}{2}\|y - x\|^2 \leq h(y) - \left( h_{\delta, \mu, L}(x) + \langle s_{\delta, \mu, L}(x), y - x \rangle \right) \leq \frac{L}{2}\|y - x\|^2 + \delta.$$
    
    With slight abuse of notation, we say \ar{that} $\nabla f_{\delta}(x)$ is $(\delta, L, \mu)$-model of $f(x)$ at point $x$ if there exists $c$ such that $(c, \nabla f_{\delta}(x))$ is a $(\delta, L, \mu)$-model of $f(x)$ at  point $x$. \ar{Constants} $L$ and $\mu$ are \ar{derived} from the context.
    
    \subsection{Inexact oracle for $f(x)$}
    
    Consider the sequence $\{f_i(x)\}_{i=1}^n$, $f_i$ is $L_i$-smooth and $\mu_i$ strongly convex.
    
    Define $f(\ovl{x})=\frac1n F(\ovl{X})$.
    
    \begin{lemma}\label{l1}
        Let $X\in \mathbb{R}^{d \times n}$ and $\|X-\overline{X}\|^2\leq\delta'$
        
        Define
        $$\delta = \frac{1}{2n}\left( \frac{L_l^2}{L_g} + \frac{2L_l^2}{\mu_g} + L_l - \mu_l \right)\delta',$$
        $$f_{\delta,L,\mu}(\overline{x}, X) = \frac{1}{n}\left( F(X) + \langle \nabla F(X), \overline{X} - X \rangle + \frac{1}{2} \left( \mu_l - \frac{2L_l^2}{\mu_g} \right)\|\overline{X} - X\|^2 \right),$$
        $$g_{\delta, L, \mu}(\overline{x}, X) = \frac{1}{n}\sum_{i=1}^n \nabla f_i(x_i).$$
        
        Then $\left( f_{\delta,L,\mu}(\overline{x}, X), g_{\delta, L, \mu}(\overline{x}, X)\right)$ is a $\left( \delta, 2L_g, \mu_g/2 \right)$-model of $f$ at point $\overline{x}$.
        
    \end{lemma}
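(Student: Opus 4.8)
The plan is to verify directly the two inequalities defining a $(\delta, 2L_g, \mu_g/2)$-model of $f$ at $\overline{x}$, for an arbitrary $y\in\R^{d}$. Write $s = y - \overline{x}$ and $t_i = \overline{x} - x_i$, so that $\sum_{i=1}^n t_i = 0$ and $\sum_{i=1}^n\norm{t_i}^2 = \norm{\overline{X} - X}^2 \le \delta'$. First I would expand
\[
\Phi(y) := f(y) - f_{\delta,L,\mu}(\overline{x}, X) - \angles{g_{\delta,L,\mu}(\overline{x}, X), y - \overline{x}},
\]
using $f(y) = \tfrac1n\sum_i f_i(y)$, $F(X) = \sum_i f_i(x_i)$, and the identity $\angles{\nabla f_i(x_i), \overline{x} - x_i} + \angles{\nabla f_i(x_i), y - \overline{x}} = \angles{\nabla f_i(x_i), y - x_i}$. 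The linear parts combine and one is left with
\[
\Phi(y) = \frac1n\sum_{i=1}^n\Big(f_i(y) - f_i(x_i) - \angles{\nabla f_i(x_i), y - x_i}\Big) + \frac cn\sum_{i=1}^n\norm{t_i}^2, \qquad c := \frac{L_l^2}{\mu_g} - \frac{\mu_l}2,
\]
where $c > 0$ since $\mu_g \le L_g \le L_l$ and $0 \le \mu_l \le L_l$.

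Next, for each $i$ I would sandwich the bracketed term using $L_i$-smoothness and $\mu_i$-strong convexity of $f_i$,
\[
\frac{\mu_i}2\norm{y - x_i}^2 \le f_i(y) - f_i(x_i) - \angles{\nabla f_i(x_i), y - x_i} \le \frac{L_i}2\norm{y - x_i}^2,
\]
then substitute $y - x_i = s + t_i$ and expand $\norm{s + t_i}^2 = \norm s^2 + 2\angles{s, t_i} + \norm{t_i}^2$. The decisive step is the cross term: since $\sum_i t_i = 0$, writing $L_i = L_l - (L_l - L_i)$ (respectively $\mu_i = \mu_l + (\mu_i - \mu_l)$) kills the uniform part of $\sum_i L_i\angles{s, t_i}$ (respectively $\sum_i\mu_i\angles{s, t_i}$) and leaves $-\sum_i(L_l - L_i)\angles{s, t_i}$ (respectively $\sum_i(\mu_i - \mu_l)\angles{s, t_i}$), which I would bound via Young's inequality $|\angles{s, t_i}| \le \tfrac\tau2\norm s^2 + \tfrac1{2\tau}\norm{t_i}^2$ together with the crude estimates $0 \le L_l - L_i \le L_l$ and $0 \le \mu_i - \mu_l \le L_l$ (the latter using $\mu_i \le L_i \le L_l$).

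Finally I would choose the free parameter $\tau = L_g/L_l$ for the upper estimate and $\tau = \mu_g/(2L_l)$ for the lower one. For the upper bound this yields $\Phi(y) \le \big(\tfrac{L_g}2 + \tfrac{L_l}2\tau\big)\norm s^2 + \big(\tfrac{L_l}{2\tau} + \tfrac{L_l}2 + c\big)\tfrac1n\sum_i\norm{t_i}^2$; the first coefficient equals $L_g$, and, using $\sum_i\norm{t_i}^2 \le \delta'$, the remaining sum is exactly $\tfrac{\delta'}{2n}\big(\tfrac{L_l^2}{L_g} + \tfrac{2L_l^2}{\mu_g} + L_l - \mu_l\big) = \delta$, i.e. $\Phi(y) \le \tfrac{2L_g}2\norm{y - \overline{x}}^2 + \delta$. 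For the lower bound, retaining also the nonnegative term $\tfrac{\mu_l}{2n}\sum_i\norm{t_i}^2$ produced by $\mu_i \ge \mu_l$, one gets $\Phi(y) \ge \big(\tfrac{\mu_g}2 - \tfrac{L_l}2\tau\big)\norm s^2 + \big(c - \tfrac{L_l}{2\tau} + \tfrac{\mu_l}2\big)\tfrac1n\sum_i\norm{t_i}^2$, and with $\tau = \mu_g/(2L_l)$ the first coefficient is $\mu_g/4$ while the second is $c - \tfrac{L_l^2}{\mu_g} + \tfrac{\mu_l}2 = 0$, hence $\Phi(y) \ge \tfrac{\mu_g/2}2\norm{y - \overline{x}}^2$. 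These are precisely the two inequalities in the definition of a $(\delta, 2L_g, \mu_g/2)$-model, so the lemma follows.

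The main obstacle is purely constant-tracking: the two Young parameters must be picked so that simultaneously the $\norm{y - \overline{x}}^2$ coefficient stays within budget ($\le L_g$ above, $\ge \mu_g/4$ below) and the residual $\norm{\overline{X} - X}^2$ terms collapse to exactly the prescribed $\delta$. In particular the lower estimate only closes because the regularizing coefficient $\mu_l - 2L_l^2/\mu_g$ in $f_{\delta,L,\mu}$ is negative with exactly the magnitude needed to absorb the $\tfrac{L_l}{2\tau}$ loss; the per-node inequalities, the identity $\sum_i t_i = 0$, and Young's inequality are the only analytic ingredients.
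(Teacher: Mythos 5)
Your proof is correct. Note that the paper does not actually prove Lemma~\ref{l1} itself — it only cites Lemma~2.1 of \cite{decentr2009} — and your argument is essentially the same one used there: a per-node Bregman sandwich $\frac{\mu_i}{2}\|y-x_i\|^2 \le f_i(y)-f_i(x_i)-\langle\nabla f_i(x_i),y-x_i\rangle \le \frac{L_i}{2}\|y-x_i\|^2$, the identity $\sum_i t_i=0$ to recentre the cross terms, and Young's inequality with the parameters $\tau=L_g/L_l$ and $\tau=\mu_g/(2L_l)$; I checked that these choices reproduce the coefficients $L_g=\tfrac{2L_g}{2}$ and $\mu_g/4=\tfrac{\mu_g/2}{2}$ and that the residual terms sum to exactly the stated $\delta$ (upper bound) and to zero (lower bound, using $c+\tfrac{\mu_l}{2}-\tfrac{L_l^2}{\mu_g}=0$).
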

    
    \begin{proof}
        The proof is provided in Lemma 2.1 in \cite{decentr2009}.
    \end{proof}
    
    This theorem assumes that $L_g, \mu_g$ variables reflect global constants, while $L_l, \mu_l$ reflect local constants.
    
    This is a fundamental theorem for our analysis because it allows us to use inexact oracle, through which we can use such an approximations (consider the case of $\|X-\overline{X}\|^2\leq\delta'$).
    $$\frac{\mu_g}{4}\norm{\ovl{y}-\ovl{x}}^2 \leq f(\ovl{y}) - f_{\delta,L,\mu}(\overline{x}, X) - \langle g_{\delta, L, \mu}(\overline{x}, X),\ovl{y}-\ovl{x}\rangle \leq L_g\norm{\ovl{y}-\ovl{x}}^2+\delta$$
    
    If we could calculate the gradient at the point $\ovl{x}$, we could get rid of $\delta$, but this is the price for decentralization, so we are trying to call the oracle at the point that is as close as possible to $C_z$.  The iteration of our decentralized algorithm is performed as follows: being in the $\sqrt{\delta'}$ neighborhood of $C_z$, we iterate the basic algorithm, then make projection to $\sqrt{\delta'}$ neighborhood of $C_z$.
    
    \subsection{Inexact APDG}
    In this section we consider the Algorithm 1 from \cite{minimax2112}, but for the sake of using it as a basis for decentralized \ar{SPP} with consensus subroutine we need its inexact variant.
    
    \begin{algorithm}[H]\caption{Inexact APDG}\label{alg:APDG}
        \begin{algorithmic}
            \State \textbf{Input:} {$\eta_x, \eta_y, \alpha_x, \alpha_y, \beta_x, \beta_y > 0$, $\tau_x, \tau_y, \sigma_x, \sigma_y\in \left(0, 1\right]$, $\theta\in \left(0, 1\right)$
                
                \State $x^0_f = x^0 \in \range \mA^\top$
                
                \State $y^0_f = \overline{y}^{-1} = y^0 \in \range \mA$}
            
            \For{$k=0,1,2,\dots$}
            \State $y_m^k = y^k + \theta\left(y^k-y^{k-1}\right)$
            
            \State $x_g^k = \tau_x x^k + \left(1 - \tau_x\right)x_f^k$
            
            \State $y_g^k = \tau_y y^k + \left(1 - \tau_y\right)y_f^k$
            
            \State $x^{k+1}=x^k+\eta_x\alpha_x\left(x_g^k - x^k\right) - \eta_x\beta_x A^T\left(A x^k-\nabla g_{\delta}(y_g^k)\right) - \eta_x\left(\nabla f_{\delta}(x_g^k) + A^T y_m^k\right)$
            
            \State $y^{k+1} = y^k + \eta_y\alpha_y\left(y_g^k - y^k\right) - \eta_y\beta_y A\left(A^T y^k+\nabla f_{\delta}(x_g^k)\right) - \eta_y\left(\nabla g_{\delta}(y_g^k) - A x^{k+1}\right)$
            
            \State $x_f^{k+1} = x_g^k + \sigma_x\left(x^{k+1} - x^k\right)$
            
            \State $y_f^{k+1} = y_g^k + \sigma_y\left(y^{k+1} - y^k\right)$
            
            \EndFor
        \end{algorithmic}
    \end{algorithm}
    
    \begin{theorem}\label{th2}
        Let $f(x)$ be $\mu_x$-strongly convex and $L_x$ smooth, and let $g(y)$ be $\mu_y$-strongly convex and $L_y$ smooth (where $\mu_x,\mu_y > 0$). Let us have access to $(\delta_x, L_x, \mu_x)$-oracle of $f$ and $(\delta_y, L_y, \mu_y)$-oracle of $g$. Also suppose Assumption \ref{ass:5} holds. Then there exist parameters of Algorithm \ref{alg:APDG} such that
        $$\|x^k - x^*\|^2 \leq \frac{1}{3L_{xy}}\sqrt{\frac{\mu_y}{\mu_x}}\left( \theta^k\Psi^0 + \frac{4}{(1-\theta)^2}(\delta_x+\delta_y) \right),$$
        $$\|y^k - y^*\|^2 \leq \frac{1}{4L_{xy}}\sqrt{\frac{\mu_x}{\mu_y}}\left( \theta^k\Psi^0 + \frac{4}{(1-\theta)^2}(\delta_x+\delta_y) \right),$$
        $$\frac{1}{1-\theta} \leq 4+4\max\left\{\sqrt{\frac{L_x}{\mu_x}}, \sqrt{\frac{L_y}{\mu_y}}, \frac{L_{xy}}{\sqrt{\mu_x\mu_y}}\right\}.$$
        Here $\Psi^0$ depends polynomially on initial values.
    \end{theorem}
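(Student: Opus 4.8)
The plan is to view Algorithm~\ref{alg:APDG} as the exact APDG method of \cite{minimax2112} in which every call of the true gradients $\nabla f(x_g^k)$ and $\nabla g(y_g^k)$ has been replaced by the corresponding $(\delta,L,\mu)$-model, and then to rerun the Lyapunov argument of \cite{minimax2112} while bookkeeping the additive errors that the inexactness introduces. Concretely, I would first recall from \cite{minimax2112} the Lyapunov function $\Psi^k$ — a fixed nonnegative weighted combination of $\|x^k-x^*\|^2$, $\|y^k-y^*\|^2$, $\|x_f^k-x^*\|^2$, $\|y_f^k-y^*\|^2$ and $\|y^k-y^{k-1}\|^2$ — together with the concrete parameter choice for $\eta_x,\eta_y,\alpha_x,\alpha_y,\beta_x,\beta_y,\tau_x,\tau_y,\sigma_x,\sigma_y,\theta$ (which uses Assumption~\ref{ass:5}, including $\mu_{xy},\mu_{yx}$) for which the exact method satisfies the one-step contraction $\Psi^{k+1}\le\theta\Psi^k$ and for which $1/(1-\theta)$ obeys the bound stated in the theorem. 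I keep precisely these parameters; in particular $\theta$ is unchanged, since it is governed only by the preserved constants $\mu_x,\mu_y,L_x,L_y,L_{xy},\mu_{xy},\mu_{yx}$.

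The heart of the argument is to revisit every place in the one-step estimate of \cite{minimax2112} where $f$ or $g$ enters through a convexity/smoothness inequality at the anchor points $x_g^k$ (resp.\ $y_g^k$) — the only points where the oracle is evaluated — and to replace it by the two-sided model inequality
\[
\tfrac{\mu}{2}\|v-u\|^2 \;\le\; h(v)-\bigl(h_{\delta,\mu,L}(u)+\langle s_{\delta,\mu,L}(u),v-u\rangle\bigr) \;\le\; \tfrac{L}{2}\|v-u\|^2+\delta .
\]
The lower (strong convexity) side is used unchanged — it carries no $\delta$ — while each use of the upper (smoothness) side costs one extra $+\delta$. Since $f$ supplies only finitely many such upper estimates, with weights fixed by the parameters, and likewise $g$, the net effect is a perturbed recursion $\Psi^{k+1}\le\theta\Psi^k+C_x\delta_x+C_y\delta_y$ with explicit constants $C_x,C_y$ that depend polynomially on the parameters (hence on $\mu_x,\mu_y,L_x,L_y,L_{xy}$). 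One also has to re-check that the initialization $x^0\in\range\mA^\top$, $y^0\in\range\mA$ stays invariant: the range-containment argument of \cite{minimax2112} used only the algebraic form of the updates, and $\nabla f_\delta(x_g^k)$, $\nabla g_\delta(y_g^k)$ land in the same subspaces as the true gradients under Assumption~\ref{ass:5}, so nothing changes.

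From there I would unroll the recursion, $\Psi^k\le\theta^k\Psi^0+\frac{C_x\delta_x+C_y\delta_y}{1-\theta}$, bound $C_x,C_y$ by a common constant $C$, and reduce the claimed form to the inequality $C\le\frac{4}{1-\theta}$, i.e.\ that the accumulated inexactness is absorbed into one extra factor of $\frac{1}{1-\theta}$; this holds because each weight in $\Psi$ and each $\delta$-coefficient scales with the same $(\mu,L,L_{xy})$-combinations that already appear in $\frac{1}{1-\theta}$, whose bound is quoted verbatim from \cite{minimax2112}. Finally, under the chosen parameters the coefficient of $\|x^k-x^*\|^2$ in $\Psi^k$ is at least $3L_{xy}\sqrt{\mu_x/\mu_y}$ and that of $\|y^k-y^*\|^2$ is at least $4L_{xy}\sqrt{\mu_y/\mu_x}$ — this is exactly what produces the prefactors $\tfrac{1}{3L_{xy}}\sqrt{\mu_y/\mu_x}$ and $\tfrac{1}{4L_{xy}}\sqrt{\mu_x/\mu_y}$ in the statement — so discarding the remaining nonnegative terms of $\Psi^k$ yields the two displayed bounds, with $\Psi^0$ the value of $\Psi$ at $k=0$, which is manifestly polynomial in $x^0,y^0,x^*,y^*$ and the parameters.

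The main obstacle is the second step: \cite{minimax2112} itself remarks that APDG relies on a genuinely new acceleration mechanism, so its one-step estimate is long and its coupling terms are delicate; one must verify carefully that inserting $\delta_x,\delta_y$ forces no change in $\theta$ and that the error constant really is $O\!\bigl(\tfrac{1}{1-\theta}\bigr)$ rather than $O\!\bigl(\tfrac{1}{(1-\theta)^2}\bigr)$ already at a single step. A secondary nuisance is correctly attributing errors to $\delta_x$ versus $\delta_y$ across the asymmetric "first $x$, then $y$" update order; but since the final bound only needs the symmetric combination $\delta_x+\delta_y$, this can be dispatched by over-counting.
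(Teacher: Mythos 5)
Your proposal matches the paper's proof in essentially every respect: the paper obtains Theorem~\ref{th2} as the special case $\sigma_f^2=\sigma_g^2=0$ of Theorem~\ref{th4}, whose proof reruns the one-step Lyapunov estimates of \cite{minimax2112} with each smoothness-type inequality replaced by the upper side of the $(\delta,L,\mu)$-model (costing $+\delta$ per use), arrives at exactly the perturbed recursion $\Psi^{k+1}\le\theta\Psi^k+\frac{4}{1-\theta}(\delta_x+\delta_y)$ with $\theta$ unchanged, unrolls it, and extracts the two displayed bounds from $\Psi^k\ge\frac{3}{4\eta_x}\|x^k-x^*\|^2+\frac{1}{\eta_y}\|y^k-y^*\|^2$ with $\eta_x\le\omega/(4L_{xy})$, $\eta_y\le 1/(4L_{xy}\omega)$, $\omega=\sqrt{\mu_y/\mu_x}$. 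The only cosmetic discrepancy is that the paper's $\Psi^k$ uses Bregman divergences $B_f(x_f^k,x^*)$, $B_g(y_f^k,y^*)$ and a signed cross term $-2\langle y^k-y^{k-1},\mA(x^k-x^*)\rangle$ rather than a nonnegative combination of squared norms, which does not affect the argument.
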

    Theorem \ref{th2} is a consequence of Theorem~\ref{th4} with $\sigma_f^2=\sigma_g^2=0$. \ar{We discuss Theorem~\ref{th4} in Section~\ref{sec:stochastic_inexact_case} and its proof is presented in Appendix~\ref{app:inexact_setting}.}
    
    \section{Stochastic inexact case}\label{sec:stochastic_inexact_case}
    \subsection{Inexact stochastic oracle}
    Consider the sequence $\{f_i(x)\}_{i=1}^n$, $f_i$ is $L_i$-smooth and $\mu_i$ strongly convex. 
    \begin{lemma}\label{lemma:3}
        
        Let $X\in \mathbb{R}^{d \times n}$ and $\|X-\overline{X}\|^2\leq\delta'$.
        
        Define
        $$\delta = \frac{1}{2n}\left( \frac{L_l^2}{L_g} + \frac{2L_l^2}{\mu_g} + L_l - \mu_l \right)\delta',$$
        $$f_{\delta,L,\mu}(\overline{x}, X) = \frac{1}{n}\left( F(X) + \langle \nabla F(X), \overline{X} - X \rangle + \frac{1}{2} \left( \mu_l - \frac{2L_l^2}{\mu_g} \right)\|\overline{X} - X\|^2 \right),$$
        $$g_{\delta, L, \mu}(\overline{x}, X) = \frac{1}{n}\sum_{i=1}^n \nabla f_i(x_i),$$
        $$\widetilde{g}_{\delta, L, \mu}(\overline{x}, X) = \frac{1}{n}\sum_{i=1}^n\frac{1}{r}\sum_{j=1}^r \nabla f_i(x_i, \xi_i^j).$$
        
        Then $\left( f_{\delta,L,\mu}(\overline{x}, X), g_{\delta, L, \mu}(\overline{x}, X)\right)$ is a $\left( \delta, 2L_g, \mu_g/2 \right)$-model of $f$ at point $\overline{x}$. Moreover, we have
        \begin{align*}
            &\mathbb{E}\widetilde{g}_{\delta, L, \mu}(x) = g_{\delta, L, \mu}(x), \\
            &\mathbb{E}\|\widetilde{g}_{\delta, L, \mu}(x)-g_{\delta, L, \mu}(x)\|^2 \leq \frac{\sum_{i=1}^n\sigma_{f, i }^2}{n^2r}=\frac{\sigma_f^2}{nr}.
        \end{align*}
    \end{lemma}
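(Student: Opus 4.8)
The plan is to split the statement into its deterministic part and its stochastic part, which are essentially independent. The deterministic claim — that $\left(f_{\delta,L,\mu}(\overline{x},X), g_{\delta,L,\mu}(\overline{x},X)\right)$ is a $(\delta, 2L_g, \mu_g/2)$-model of $f$ at $\overline{x}$ — is word for word the content of Lemma~\ref{l1}, with the same definitions of $\delta$, $f_{\delta,L,\mu}$ and $g_{\delta,L,\mu}$. So for this half I would simply invoke Lemma~\ref{l1} (equivalently, Lemma~2.1 of \cite{decentr2009}); no new work is required.

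For the unbiasedness claim I would use linearity of expectation together with Assumption~\ref{ass:3}. Writing $\widetilde{g}_{\delta,L,\mu}(x) = \frac{1}{n}\sum_{i=1}^n \frac{1}{r}\sum_{j=1}^r \nabla f_i(x_i,\xi_i^j)$ and taking expectation term by term, each $\E_{\xi_i^j}\nabla f_i(x_i,\xi_i^j) = \nabla f_i(x_i)$, so the double average collapses to $\frac{1}{n}\sum_{i=1}^n \nabla f_i(x_i) = g_{\delta,L,\mu}(x)$.

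For the variance bound I would center each summand. Set $e_i^j := \nabla f_i(x_i,\xi_i^j) - \nabla f_i(x_i)$, so that $\widetilde{g}_{\delta,L,\mu}(x) - g_{\delta,L,\mu}(x) = \frac{1}{nr}\sum_{i=1}^n \sum_{j=1}^r e_i^j$, with every $e_i^j$ zero-mean and $\E\sqn{e_i^j}\le \sigma_{f,i}^2$ by Assumption~\ref{ass:3}. Assuming the mini-batch samples $\{\xi_i^j\}$ are drawn independently across both the node index $i$ and the batch index $j$, all cross terms $\E\langle e_i^j, e_{i'}^{j'}\rangle$ with $(i,j)\neq(i',j')$ vanish, and the computation reads
$$\E\sqn{\widetilde{g}_{\delta,L,\mu}(x) - g_{\delta,L,\mu}(x)} = \frac{1}{n^2 r^2}\sum_{i=1}^n\sum_{j=1}^r \E\sqn{e_i^j} \le \frac{1}{n^2 r^2}\sum_{i=1}^n r\,\sigma_{f,i}^2 = \frac{\sum_{i=1}^n \sigma_{f,i}^2}{n^2 r} = \frac{\sigma_f^2}{nr},$$
the last equality being just the definition $\sigma_f^2 = \frac1n\sum_{i=1}^n \sigma_{f,i}^2$.

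The only point that needs care is making explicit the independence of the mini-batch samples so that the cross terms disappear; everything else is bookkeeping. I would also note that $\widetilde{g}$ is only ever fed into the inexact-oracle framework of Theorem~\ref{th2} (and its stochastic counterpart Theorem~\ref{th4}), so no further properties — higher moments, bounded support, etc. — are needed here, which is why the lemma can be stated and proved in this compact form.
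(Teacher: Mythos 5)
Your proposal is correct: the paper itself proves this lemma only by citing Lemma 4.1 of \cite{stoch_decentr2103}, and your argument — deferring the deterministic model property to Lemma~\ref{l1}, then using linearity of expectation and the vanishing of cross terms for independent zero-mean batch errors to get $\frac{1}{n^2r^2}\sum_{i,j}\E\sqn{e_i^j}\le\frac{\sigma_f^2}{nr}$ — is exactly the standard computation that citation stands in for. The independence across nodes and batch indices that you flag as the one point needing care is indeed assumed in the paper's setup ("Every node has its own sequence of independent random variables (different sequences are also independent)"), so nothing is missing.
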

    \begin{proof}
        See Lemma 4.1 from \cite{stoch_decentr2103}.
    \end{proof}
    
    This lemma is similar to the Lemma \ref{l1} and follows the same idea (to transform the "inexactness" in equality of node values in $\delta$ constant in inexact oracle), but also considers stochastic case.
    
    \subsection{Inexact stochastic APDG}

    \begin{definition}
        There are a sequence of independent random variables for stochastic oracle, which represent a history of stochastic process. Let $\xi^k = (\xi_x^1, \xi_x^2, \dots, \xi_x^k, \xi_y^1, \xi_y^2, \dots, \xi_y^k)$. $\xi^k_x=\{\xi_x^{k,i}\}_{i=1}^{r_f}$ and $\xi^k_y=\{\xi_y^{k,j}\}_{j=1}^{r_g}$ are random variables for batches at iteration k, $r_f,r_g$ denote the batch sizes for $f$ and $g$ respectively. 
    \end{definition}

    \begin{definition}
        For every iteration $k$ of the algorithm, the oracle has access only to stochastic inexact gradients $\nabla f_{\delta}(x, \xi_x^{k,i}, \xi^{k-1})$ and $\nabla g_{\delta}(y, \xi_y^{k,j},  \xi^{k-1})$, where $1 \leq i \leq r_f$, $1 \leq j \leq r_g$. Let $\nabla f_{\delta}^i(x,  \xi^{k-1})=\E_{\xi_x^{k,i}}\nabla f_{\delta}(x, \xi_x^{k,i},  \xi^{k-1})$ and $\nabla g_{\delta}^j(y,  \xi^{k-1})=\E_{\xi_y^{k,j}}\nabla g_{\delta}(y, \xi_y^{k,j},  \xi^{k-1})$. $\nabla^r f_{\delta}(x, \xi_x^k, \xi^{k-1})$ and $\nabla^r g_{\delta}(y, \xi_y^k, \xi^{k-1})$ are batched gradients.
    \end{definition}

    \begin{assumption}\label{ass:6}(Inexact oracle property)\\
        \ar{Let $\nabla g_{\delta}^j(y, \xi^{k-1})$ denote the $(\delta_y(\xi^{k-1}), L_y, \mu_y)$-model of $g$ at point $y$ and $\nabla f_{\delta}^i(x, \xi^{k-1})$ denote the $(\delta_x(\xi^{k-1}), L_x, \mu_x)$-model of $f$ at point $x$, respectively. We assume that}
        $\D_{\xi_x^{k,i}}\nabla f_{\delta}(x, \xi_x^{k,i},  \xi^{k-1}) \leq \sigma_f^2$ and $\D_{\xi_y^{k,j}}\nabla g_{\delta}(y, \xi_y^{k,j}, \xi^{k-1}) \leq \sigma_g^2$. Moreover, we assume that $\E_{\xi^{k-1}} \delta_x(\xi^{k-1}) \leq \delta_x$ and $\E_{\xi^{k-1}} \delta_y(\xi^{k-1}) \leq \delta_y$.
    \end{assumption}

    \noindent\ar{For brevity we write $\nabla g_k=\nabla^r g_{\delta}(y_g^k, \xi_y^k, \xi^{k-1})$, $\nabla f_k = \nabla^r f_{\delta}(x_g^k, \xi_x^k, \xi^{k-1}).$}
    \begin{algorithm}[H]\caption{Inexact stochastic APDG}\label{alg:ISAPDG}
        \begin{algorithmic}
            \State \textbf{Input:} {$\eta_x, \eta_y, \alpha_x, \alpha_y,  \beta_x, \beta_y > 0$, $\tau_x, \tau_y, \sigma_x, \sigma_y\in \left(0, 1\right]$, $\theta\in \left(0, 1\right)$
                
                \State $x^0_f = x^0 \in \range \mA^\top$
                
                \State $y^0_f = \overline{y}^{-1} = y^0 \in \range \mA$}
            
            \For{$k=0,1,2,\dots$}
            \State $y_m^k = y^k + \theta\left(y^k-y^{k-1}\right)$\label{ISAPDG:line:1}
            
            \State $x_g^k = \tau_x x^k + \left(1 - \tau_x\right)x_f^k$\label{ISAPDG:line:2}
            
            \State $y_g^k = \tau_y y^k + \left(1 - \tau_y\right)y_f^k$\label{ISAPDG:line:3}
            
            \State $x^{k+1}=x^k+\eta_x\alpha_x\left(x_g^k - x^k\right) - \eta_x\beta_x A^T\left(A x^k-\nabla g_k\right) - \eta_x\left(\nabla f_k + A^T y_m^k\right)$\label{ISAPDG:line:4}
            
            \State $y^{k+1} = y^k + \eta_y\alpha_y\left(y_g^k - y^k\right) - \eta_y\beta_y A\left(A^T y^k+\nabla f_k\right) - \eta_y\left(\nabla g_k - A x^{k+1}\right)$\label{ISAPDG:line:5}
            
            \State $x_f^{k+1} = x_g^k + \sigma_x\left(x^{k+1} - x^k\right)$\label{ISAPDG:line:6}
            
            \State $y_f^{k+1} = y_g^k + \sigma_y\left(y^{k+1} - y^k\right)$\label{ISAPDG:line:7}
            
            \EndFor
        \end{algorithmic}
    \end{algorithm}
    
    Such general conditions are needed because they cover the properties of average values obtained after averaging Algorithm~\ref{alg:DAPDG}. As already said, we keep the expectations of values $\sqn{X-\ovl{X}}$ and $\sqn{Y-\ovl{Y}}$ small enough. By Lemma~\ref{lemma:3}, this implies that the expectations of gradients $\nabla g_{\delta}(y_g^k)=\E g_{\delta}^r(y_g^k, \xi_y^k, \xi^{k-1})$ and $\nabla f_{\delta}(x_g^k)=\E g_{\delta}^r(x_g^k, \xi_x^k, \xi^{k-1})$ are $(\delta_y, L_y, \mu_y)$-model of $g$ at point $y_g^k$ and $(\delta_x, L_x, \mu_x)$-model of $f$ at point $x_g^k$ correspondingly.
    
    \begin{theorem}\label{th4}
        Let $f(x)$ be $\mu_x$-strongly convex and $L_x$ smooth, $g(y)$ be $\mu_y$-strongly convex and $L_y$ smooth ($\mu_x,\mu_y\geq 0$). We have access to $(\delta_x^k, L_x, \mu_x)$-stochastic oracle of $f$ with variance upper bounded by $\sigma_f^2$ and $(\delta_y^k, L_y, \mu_y)$-stochastic oracle of $g$ with variance upper bounded by $\sigma_g^2$ at iteration $k$. We also denote batch size with $r$. Also suppose Assumption~\ref{ass:5} and Assumption~\ref{ass:6} hold in environment of Algorithm~\ref{alg:ISAPDG}. Then there exist different sets of parameters of Algorithm~\ref{alg:ISAPDG} such that
        $$\mathbb{E}\|x^k - x^*\|^2 \leq \frac{\omega}{3L_{xy}}\left( \theta^k \Psi^0 +  \frac{4}{(1-\theta)^2}(\delta_x+\delta_y)+\frac{\Sigma^2}{2(1-\theta)} \right),$$
        $$\mathbb{E}\|y^k - y^*\|^2 \leq \frac{1}{4L_{xy}\omega}\left( \theta^k \Psi^0 +  \frac{4}{(1-\theta)^2}(\delta_x+\delta_y)+\frac{\Sigma^2}{2(1-\theta)} \right),$$
        $$\Sigma^2 = \left( \frac{1}{L_x}+\frac{\omega}{L_{xy}} \right)\frac{\sigma_f^2}{r_f}+\left( \frac{1}{L_y}+\frac{1}{L_{xy}\omega} \right)\frac{\sigma_g^2}{r_g}.$$
        
        Here is the list of possible estimations depending on different constants:

        \begin{subequations}
            \begin{align}
            \frac{1}{1-\theta} &\leq
            4 + 4\max\left\{\sqrt{\frac{L_x}{\mu_x}}, \sqrt{\frac{L_y}{\mu_y}},\frac{L_{xy}}{\sqrt{\mu_x\mu_y}}\right\}, \omega = \sqrt{\frac{\mu_y}{\mu_x}}, \label{eq:th4_bound_1}\\
            \frac{1}{1-\theta} & \leq
            4+8\max\left\{
            \frac{\sqrt{L_xL_y}}{\mu_{xy}},
            \frac{L_{xy}}{\mu_{xy}}\sqrt{\frac{L_x}{\mu_x}},
            \frac{L_{xy}^2}{\mu_{xy}^2}
            \right\}, \omega = \sqrt{\frac{\mu_{xy}^2}{2\mu_xL_x}}, \label{eq:th4_bound_2}\\
            \frac{1}{1-\theta} & \leq
            4+8\max\left\{
            \frac{\sqrt{L_xL_y}}{\mu_{yx}},
            \frac{L_{xy}}{\mu_{yx}}\sqrt{\frac{L_y}{\mu_y}},
            \frac{L_{xy}^2}{\mu_{yx}^2}
            \right\}, \omega = \sqrt{\frac{2\mu_yL_y}{\mu_{yx}^2}}, \label{eq:th4_bound_3}\\
            \frac{1}{1-\theta} &\leq 2+8\max\left\{
            \frac{\sqrt{L_xL_y}L_{xy}}{\mu_{xy}\mu_{yx}},
            \frac{L_{xy}^2}{\mu_{yx}^2},
            \frac{L_{xy}^2}{\mu_{xy}^2}
            \right\}, \omega = \frac{\mu_{xy}}{\mu_{yx}}\sqrt{\frac{L_y}{L_x}}. \label{eq:th4_bound_4}
        \end{align}
    \end{subequations}
    Here $\Psi^0$ depends polynomially on initial values.
    \end{theorem}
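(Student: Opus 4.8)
Since Theorem~\ref{th2} is exactly Theorem~\ref{th4} with $\sigma_f^2=\sigma_g^2=0$, it suffices to prove the latter. The plan is a Lyapunov-function argument in expectation that follows the convergence proof of APDG in \cite{minimax2112} but carries along the two error sources specific to Algorithm~\ref{alg:ISAPDG}: the $(\delta,L,\mu)$-oracle inexactness and the batched stochastic noise. I would fix a potential $\Psi^k$ assembled from $\sqn{x^k-x^*}$, $\sqn{y^k-y^*}$, $\sqn{x_f^k-x^*}$, $\sqn{y_f^k-y^*}$ together with the bilinear-coupling terms (such as $\sqn{\mA^\top(y^k-y^{k-1})}$) that make the $\angles{y,\mA x}$ cross-terms telescope in the exact analysis, leaving the weights — including the free scaling parameter $\omega$ — as parameters to be pinned down later. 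The goal is a one-step inequality
\[
\E_k\Psi^{k+1}\ \le\ \theta\,\Psi^k\ +\ c_1\bigl(\delta_x(\xi^{k-1})+\delta_y(\xi^{k-1})\bigr)\ +\ c_2\Sigma^2,
\]
where $\E_k$ is the conditional expectation over the $k$-th batch given the history $\xi^{k-1}$. Taking total expectations, using $\E\,\delta_x(\xi^{k-1})\le\delta_x$ and $\E\,\delta_y(\xi^{k-1})\le\delta_y$ from Assumption~\ref{ass:6}, and unrolling the geometric series yields $\E\Psi^k\le\theta^k\Psi^0+\tfrac{c_1}{1-\theta}(\delta_x+\delta_y)+\tfrac{c_2}{1-\theta}\Sigma^2$; since in the APDG tuning $c_1$ itself carries one factor $\tfrac{1}{1-\theta}$ (the momentum weight against which the oracle error accumulates, as in \cite{First-Order-Methods-of-Smooth-Convex-Optimization-with-Inexact-Oracle}), the $\delta$-term ends up with $\tfrac{1}{(1-\theta)^2}$, while the martingale noise keeps a single $\tfrac{1}{1-\theta}$.

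The heart of the proof is the one-step inequality, obtained by substituting lines \ref{ISAPDG:line:4}--\ref{ISAPDG:line:7} of Algorithm~\ref{alg:ISAPDG} into $\Psi^{k+1}$, expanding the squares, and grouping terms. Four groups arise. The \emph{optimality/monotonicity} terms of $f$ and $g$: here each use of a plain smoothness or strong-convexity inequality in \cite{minimax2112} is replaced by the two-sided model inequality of the oracle, e.g. $\tfrac{\mu_x}{2}\sqn{z-x_g^k}\le f(z)-f_\delta(x_g^k)-\angles{\nabla f_\delta(x_g^k),z-x_g^k}\le\tfrac{L_x}{2}\sqn{z-x_g^k}+\delta_x$, applied at $z=x^*$ and at the relevant convex combinations of iterates, every invocation of the upper branch emitting a $+\delta_x$ (resp.\ $+\delta_y$). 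The \emph{bilinear} terms: these telescope verbatim as in the deterministic case because $\mA$ is deterministic and the initialization $x^0\in\range\mA^\top$, $y^0\in\range\mA$ keeps all iterates in these subspaces, which is exactly where Assumption~\ref{ass:5} ($\mu_{xy},\mu_{yx}$ lower-bounding $\mA\mA^\top$, $\mA^\top\mA$ on their ranges) enters. The \emph{stochastic} terms: writing $\nabla f_k=\nabla f_\delta(x_g^k)+\zeta_x^k$ with $\E_k\zeta_x^k=0$ and $\E_k\sqn{\zeta_x^k}\le\sigma_f^2/r_f$ (Lemma~\ref{lemma:3} / Assumption~\ref{ass:6}), the inner products $\angles{\zeta_x^k,\cdot}$ drop out in conditional expectation and the residual $\sqn{\zeta_x^k}$-terms, each paid for by a stepsize factor ($\eta_x$, $\eta_x\beta_x$, $\eta_y\beta_y$, etc.), contribute precisely the quantity $\Sigma^2$ with the coefficients $\tfrac{1}{L_x}+\tfrac{\omega}{L_{xy}}$ and $\tfrac{1}{L_y}+\tfrac{1}{L_{xy}\omega}$. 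Finally, Young's inequality handles whatever cross-terms remain between the $\delta$'s and the contracting quadratics.

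It then remains to choose the algorithm parameters. The exact APDG analysis already provides one admissible tuning; the four displayed estimates \eqref{eq:th4_bound_1}--\eqref{eq:th4_bound_4} correspond to four different admissible choices of $(\eta_x,\eta_y,\alpha_x,\alpha_y,\beta_x,\beta_y,\tau_x,\tau_y,\sigma_x,\sigma_y,\theta)$ together with the potential weight $\omega$, each making $\theta$ as large as the stated maximum permits while preserving $\Psi^{k+1}\le\theta\Psi^k+\dots$. When $\mu_x=0$ or $\mu_y=0$ the first regime degenerates and one falls back on the $\mu_{xy}$- or $\mu_{yx}$-driven regimes, which is why the statement allows $\mu_x,\mu_y\ge0$. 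For each regime I would check (a) that the parameters satisfy the inequalities required for the one-step contraction — the bulk of the casework, and the place where one confirms that the added $\delta$ and $\sigma^2$ enter only additively and do not disturb the exact-case inequalities — and (b) that $\Psi^k$ dominates the distances with the advertised constants, $\sqn{x^k-x^*}\le\tfrac{\omega}{3L_{xy}}\Psi^k$ and $\sqn{y^k-y^*}\le\tfrac{1}{4L_{xy}\omega}\Psi^k$, which is immediate from the weights on those two terms. Combining (a), (b) with the unrolled bound on $\E\Psi^k$ gives the three displayed conclusions, with $\Psi^0$ a polynomial in the initial iterates.

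The main obstacle is the bookkeeping inside the APDG potential, which is among the more delicate accelerated analyses. Two points demand care. First, the model inequality is one-sided in $\delta$ (the $+\delta$ sits on the smoothness side only), so every appeal to "descent" or smoothness in \cite{minimax2112} must be audited to ensure the $\delta$'s accumulate against the $O(\tfrac{1}{1-\theta})$ momentum weight and no worse — getting this exactly right is what produces the $(1-\theta)^{-2}$ (not $(1-\theta)^{-3}$) dependence the paper advertises as an advantage. Second, the variance must emerge with the small coefficients of $\Sigma^2$: this requires that every $\sqn{\zeta^k}$ produced when expanding the squared updates be absorbed into negative quadratic slack created by the strong-convexity / co-coercivity of the exact oracle, exactly the mechanism behind classical batched stochastic acceleration \cite{stoch_decentr2103,gasnikov2022power}. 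A loose treatment of either point would spoil the sharp $1/\sqrt{\mu_x\mu_y}$-type rate that distinguishes this result from $\min\{\mu_x,\mu_y\}$-based bounds.
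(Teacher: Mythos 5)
Your proposal follows essentially the same route as the paper: the paper likewise perturbs the APDG potential argument of \cite{minimax2112}, replacing each smoothness/strong-convexity step by the two-sided $(\delta,L,\mu)$-model inequality (each application emitting a $+\delta_x$ or $+\delta_y$) and handling the batch noise in conditional expectation, so that the one-step contraction reads $\E\Psi^{k+1}\le\theta\E\Psi^k+\frac{4}{1-\theta}(\delta_x+\delta_y)+\frac{1}{2}\Sigma^2$ and unrolls to exactly the $(1-\theta)^{-2}$ and $(1-\theta)^{-1}$ dependencies you predict. The only cosmetic differences are that the paper's Lyapunov function uses the Bregman divergences $B_f(x_f^k,x^*)$ and $B_g(y_f^k,y^*)$ rather than squared distances for the $x_f,y_f$ terms, and the four regimes \eqref{eq:th4_bound_1}--\eqref{eq:th4_bound_4} are imported directly from the parameter choices of \cite{minimax2112}, exactly as you anticipate.
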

    The proof is provided in appendix \ref{app:inexact_setting}, including the constants values for every estimation.
    
    Each line in the list of possible estimations symbolises particular case in a convex-concave conditions. Firstly, \eqref{eq:th4_bound_1} expresses strongly-convex-strongly-concave case. Secondly, \eqref{eq:th4_bound_2} describes strongly-convex-concave and positive $\mu_{xy}$ case. Thirdly, line \eqref{eq:th4_bound_3} expresses convex-strongly-concave and positive $\mu_{yx}$ case. Forthly, \eqref{eq:th4_bound_4} covers convex-concave and positive $\mu_{xy}$, $\mu_{yx}$ case. Every case provides a linear convergence rate for corresponding class of problems. The last case is associated with convex-concave problem with square full-range matrix(in this case $\mu_{xy},\mu_{yx}>0$ and Assumption~\ref{ass:5} holds).
    
    Speaking about deterministic side, according to \cite{zhang2019lower} this upper bound is optimal in strongly-convex-strongly-concave case.  
    
    \section{\ar{Decentralized} algorithm and results}
    We take Algorithm 1 from \cite{minimax2112} as a basis for our method. This algorithm works according to lower bounds and also cover convex-concave case, therefore we obtain desirable non-decentralized stochastic generalization, which may be futher generalized to decentralized case.
    
    The algorithm can be executed in a decentralized way due to "decentralized" property of mixing matrices in Consensus algorithm ("Consensus" section). Every node has its own sequence of independent random variables (different sequences are also independent), which is used in calls of stochastic oracle. \ar{Let $r_{f,i}$ and $r_{g,i}$ denote batch sizes at node $i$ for $f$ and $g$, respectively}, and let $\xi_{x, i}^k$ and $\xi_{y, i}^k$ be the sets of random variables on $i$'th node at iteration $k$. Also introduce $\xi_x^k = (\xi_{x, 1}^k \xi_{x, 2}^k \dots \xi_{x, n}^k)$, $\nabla^r F(X, \xi_x^k) = (\nabla^r f_1(x_1, \xi_{x, 1}^k) \nabla^r f_2(x_2, \xi_{x, 2}^k) \dots \nabla^r f_n(x_n, \xi_{x, n}^k))$. Here $\nabla^r f_i(x_i, \xi_{x, i}^k)$ is a batched gradient. Notations $\xi_y^k$ and  $\nabla^r G(X, \xi_y^k)$ are defined in the same way.
    
    \ar{In the algorithm below, we write $\nabla F_k = \nabla^r F(X_g^k, \xi_x^k)$, $\nabla G_k = \nabla^r G(Y_g^k, \xi_y^k)$ for brevity}.

    \begin{algorithm}[H]\caption{Decentralized APDG with consensus subroutine}\label{alg:DAPDG}
        \begin{algorithmic}
            \State \textbf{Input:} {$\eta_x, \eta_y, \alpha_x, \alpha_y, \beta_x, \beta_y > 0$, $\tau_x, \tau_y, \sigma_x, \sigma_y\in \left(0, 1\right]$, $\theta\in \left(0, 1\right)$
                
                \State $X^0_f = X^0 = \overline{X^0}$, $\overline{x}^0 \in \range \mA^\top$

                \State $Y^0_f = Y^{-1} = Y^0 = \overline{Y^0}$, $\overline{y}^0 \in \range \mA$}
            
            \For{$k=0,1,2,\dots$}
            \State $Y_m^k = Y^k + \theta\left(Y^k-Y^{k-1}\right)$\label{DAPDG:line:1}
            
            \State $X_g^k = \tau_x X^k + \left(1 - \tau_x\right)X_f^k$\label{DAPDG:line:2}
            
            \State $Y_g^k = \tau_y Y^k + \left(1 - \tau_y\right)Y_f^k$\label{DAPDG:line:3}
            
            \State $U^{k+1}=X^k+\eta_x\alpha_x\left(X_g^k - X^k\right) - \eta_x\beta_x A^T\left(A X^k-\nabla G_k\right) - \eta_x\left(\nabla F_k + A^T Y_m^k\right)$\label{DAPDG:line:4}
            
            \State $W^{k+1} = Y^k + \eta_y\alpha_y\left(Y_g^k - Y^k\right) - \eta_y\beta_y A\left(A^T Y^k+\nabla F_k\right) - \eta_y\left(\nabla G_k - A U^{k+1}\right)$\label{DAPDG:line:5}
            
            \State $X^{k+1} = \text{Consensus}(U^{k+1}, T^k)$\label{DAPDG:line:6}
            
            \State $Y^{k+1} = \text{Consensus}(W^{k+1}, T^k)$\label{DAPDG:line:7}
            
            \State $X_f^{k+1} = X_g^k + \sigma_x\left(X^{k+1} - X^k\right)$\label{DAPDG:line:8}
            
            \State $Y_f^{k+1} = Y_g^k + \sigma_y\left(Y^{k+1} - Y^k\right)$\label{DAPDG:line:9}
            
            \EndFor
        \end{algorithmic}
    \end{algorithm}

    \begin{algorithm}[H]\caption{Consensus}
        \begin{algorithmic}
            \State \textbf{Input:} $X^0$, the number of iterations $T$
            
            \For{$t=0,1,\dots, T-1$}
            \State $X^{t+1}=X^t W^t$
            \EndFor
        \end{algorithmic}
    \end{algorithm}
    
    One may notice that it is possible to rewrite this algorithm in terms of average values of variables at nodes (algorithm is provided in appendix \ref{app:consensus_iterations}).
    We make Consensus iterations in order to make calls of stochastic oracle from relatively close points, allowing average gradient by nodes to be inexact gradient at average point with sufficiently small $\delta$. Consensus iterations do not change the average value due to doubly stochastic property of mixing matrices.
    \subsection{Consensus}
    We consider a sequence of  non-directed communication graphs $\left\{\left(V,E^k \right)\right\}_{k=0}^{\infty}$ and a sequence of corresponding mixing matrices $\left\{ W^k \right\}_{k=0}^{\infty}$ associated with it. We impose the following assumption.
    \begin{assumption}\label{ass:7}
        Mixing matrix sequence $\left\{ W^k \right\}_{k=0}^{\infty}$ satisfies the following properties.
        
        (Decentralized property) If $(i,j)\not\in E^k$, then $\left[ W^k \right]_{ij} = 0$.
        
        (Doubly stochastic property) $W^k 1_n = 1_n$, $1_n^T W^k = 1_n^T$.
        
        (Contraction property) There exist $\tau \in \mathbb{Z}_{++}$ and $\lambda \in \left(0, 1\right)$, such that for every $k \geq \tau - 1$ it holds
        $$\|W_{\tau}^k X - \overline{X}\| \leq (1-\lambda)\|X-\overline{X}\|,$$
        where $W_{\tau}^k = W^k\dots W^{k - \tau + 1}$.
    \end{assumption}
    
    \subsection{Complexity results for Algorithm \ref{alg:DAPDG}}
    \noindent
    
    Our analysis shows that
    performance of Algorithm \ref{alg:DAPDG} depends on global constants.
    
    \begin{theorem}
        Let $f_i(x)$ be $\mu_{x, i}$-strongly convex and $L_{x, i}$ smooth, and let $g_i(y)$ be $\mu_{y, i}$-strongly convex and $L_{y, i}$ smooth ($\mu_{x, i},\mu_{y, i}\geq0$). Also assume that there exist $i_1$ and $i_2$ such that $\mu_{x,i_1},\mu_{y,i_2}>0$. The $i$'th node has access only to unbiased stochastic gradients of function $f_i$ with variance $\sigma_{f, i}^2$ and of function $g_i$ with variance $\sigma_{g, i}^2$. Also suppose Assumption \ref{ass:7} holds. There exist $L_{xy}$, $\mu_{xy}$ and $\mu_{yx}$, which satisfy Assumption \ref{ass:5} for pairs of functions $(f_i, g_i)$ at every node. Then there exist sets of constants of Algorithm \ref{alg:DAPDG}, such that Algorithm~\ref{alg:DAPDG} has the following complexities.
        
        The number of iterations of Algorithm \ref{alg:DAPDG}
        $$N = \mathcal{O}\left(  \frac{1}{1-\theta} \log\left(\frac{D''}{\epsilon}\right)\right).$$
        
        The number of communications
        $$N_{comm} = \mathcal{O}\left( \frac{1}{1-\theta}\kappa\log\left(\frac{D''}{\epsilon}\right)\log\left(\frac{D'}{\epsilon}\right)\right).$$
        
        The number of stochastic oracle calls at node $i$
        $$N_{comp}^i  = 2N + \mathcal{O}\left( \frac{\max\{\omega, \omega^{-1}\}}{nL_{xy}(1-\theta)^2\epsilon}\left( \left( \frac{1}{L_{x}}+\frac{\omega}{L_{xy}} \right)\sigma_{f, i}^2 + \left( \frac{1}{L_{y}}+\frac{1}{L_{xy}\omega} \right)\sigma_{g, i}^2 \right)\log\left(\frac{D''}{\epsilon}\right)\right).$$
        
        Here is the list of possible estimations depending on different constants:
        \begin{align*}
            \frac{1}{1-\theta} &=
            \mathcal{O}\left(\max\left\{\sqrt{\frac{L_{x}}{\mu_{x}}}, \sqrt{\frac{L_{y}}{\mu_{y}}},\frac{L_{xy}}{\sqrt{\mu_{x}\mu_{y}}}\right\}\right),~ \omega = \sqrt{\frac{\mu_{y}}{\mu_{x}}},\\
            \frac{1}{1-\theta} & =
            \mathcal{O}\left(\max\left\{
            \frac{\sqrt{L_{x}L_{y}}}{\mu_{xy}},
            \frac{L_{xy}}{\mu_{xy}}\sqrt{\frac{L_{x}}{\mu_{x}}},
            \frac{L_{xy}^2}{\mu_{xy}^2}
            \right\}\right),~ \omega = \sqrt{\frac{\mu_{xy}^2}{2\mu_{x}L_{x}}},\\
            \frac{1}{1-\theta} & =
            \mathcal{O}\left(\max\left\{
            \frac{\sqrt{L_{x}L_{y}}}{\mu_{yx}},
            \frac{L_{xy}}{\mu_{yx}}\sqrt{\frac{L_{y}}{\mu_{y}}},
            \frac{L_{xy}^2}{\mu_{yx}^2}
            \right\}\right),~ \omega = \sqrt{\frac{2\mu_{y}L_{y}}{\mu_{yx}^2}},\\
            \frac{1}{1-\theta} &= \mathcal{O}\left(\max\left\{
            \frac{\sqrt{L_{x}L_{y}}L_{xy}}{\mu_{xy}\mu_{yx}},
            \frac{L_{xy}^2}{\mu_{yx}^2},
            \frac{L_{xy}^2}{\mu_{xy}^2}
            \right\}\right),~ \omega = \frac{\mu_{xy}}{\mu_{yx}}\sqrt{\frac{L_{y}}{L_{x}}},
        \end{align*}
        
        where $D'$ and $D''$ are polynomial and not depend on $\epsilon$.
    \end{theorem}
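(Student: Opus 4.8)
The plan is to combine the stochastic inexact analysis of Algorithm~\ref{alg:ISAPDG} (Theorem~\ref{th4}) with the consensus-projection lemmas (Lemma~\ref{l1} and Lemma~\ref{lemma:3}), by viewing Algorithm~\ref{alg:DAPDG} as an instance of Algorithm~\ref{alg:ISAPDG} run on the averaged iterates $\ovl{x}^k,\ovl{y}^k$ with an inexact stochastic oracle whose inexactness parameter $\delta$ is controlled by the number of consensus steps $T^k$. First I would reformulate Algorithm~\ref{alg:DAPDG} in terms of the averages of the node variables (the reformulation promised in Appendix~\ref{app:consensus_iterations}): since the mixing matrices are doubly stochastic, averaging the update lines \ref{DAPDG:line:4}--\ref{DAPDG:line:5} over nodes shows that $\ovl{x}^k,\ovl{y}^k,\ovl{x}_f^k,\ldots$ satisfy exactly the recursions of Algorithm~\ref{alg:ISAPDG}, where the roles of $\nabla f_{\delta},\nabla g_{\delta}$ are played by $\frac{1}{n}\sum_i\nabla^r f_i(x_{g,i}^k,\xi_{x,i}^k)$ and $\frac{1}{n}\sum_i\nabla^r g_i(y_{g,i}^k,\xi_{y,i}^k)$. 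By Lemma~\ref{lemma:3}, conditioned on $\|X_g^k-\ovl{X_g^k}\|^2\le\delta'_x$ and $\|Y_g^k-\ovl{Y_g^k}\|^2\le\delta'_y$, these are unbiased $(\delta_x^k,L_x,\mu_x)$- and $(\delta_y^k,L_y,\mu_y)$-stochastic oracles with $\delta_x^k,\delta_y^k$ proportional to $\delta'_x,\delta'_y$ and variances bounded by $\sigma_f^2/r_f$, $\sigma_g^2/r_g$ — precisely the setting of Assumption~\ref{ass:6}.

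Next I would establish the crucial bound that a single gradient step expands the consensus error only polynomially: after the inner update producing $U^{k+1},W^{k+1}$, one has $\E\|U^{k+1}-\ovl{U^{k+1}}\|^2 \le P(\text{constants})\cdot\big(\E\|X^k-\ovl{X^k}\|^2 + \E\|Y^k-\ovl{Y^k}\|^2 + \E\|X_f^k-\ovl{X_f^k}\|^2 + \cdots + \sigma_f^2 + \sigma_g^2\big)$, using $L_{x,i}$-smoothness (so that $\|\nabla^r F(X)-\ovl{\nabla^r F(X)}\|$ is controlled by $\|X-\ovl{X}\|$ plus stochastic noise), boundedness of $A^TA$ by $L_{xy}^2$, and the elementary fact that centering is a contraction. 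The Consensus subroutine with $T^k\ge\tau$ steps then contracts this by $(1-\lambda)^{2\lfloor T^k/\tau\rfloor}$ (Assumption~\ref{ass:7}), so choosing $T^k = \mathcal{O}\!\big(\frac{1}{\lambda}\log\frac{D'}{\epsilon}\big)$ — i.e. $\kappa := \tau/\lambda$ consensus steps per iteration up to logs — drives $\E\|X^{k+1}-\ovl{X^{k+1}}\|^2$ and $\E\|Y^{k+1}-\ovl{Y^{k+1}}\|^2$ below the target $\delta'$, uniformly in $k$. This is where I expect the real work to lie: one must set up a joint recursion (a Lyapunov-type inequality) on the vector of consensus errors of all the auxiliary sequences $X,Y,X_f,Y_f,X_g,Y_g,Y_m$ simultaneously, show it stays bounded, and verify the bound $D'$ is polynomial in the problem data and independent of $\epsilon$ — the interdependence of these sequences and the stochastic terms makes the bookkeeping delicate, and one has to be careful that the $\theta^k\Psi^0$ and $\Sigma^2$ terms entering through Theorem~\ref{th4} do not blow up $D'$.

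With the oracle inexactness $\delta_x,\delta_y$ and the variances under control, I would invoke Theorem~\ref{th4}: choosing $\delta_x+\delta_y = \mathcal{O}((1-\theta)^2\epsilon)$ (achieved by the above choice of $T^k$, hence the $\log(D'/\epsilon)$ factor) and the batch sizes $r_f,r_g$ large enough that $\Sigma^2/(2(1-\theta)) = \mathcal{O}(\epsilon)$, Theorem~\ref{th4} gives $\E\|\ovl{x}^k-x^*\|^2,\E\|\ovl{y}^k-y^*\|^2 \le \mathcal{O}(\theta^k\Psi^0 + \epsilon)$; taking $N = \mathcal{O}(\frac{1}{1-\theta}\log(D''/\epsilon))$ with $D''$ polynomial in $\Psi^0$ and the constants makes $\theta^N\Psi^0\le\epsilon$, and since each node's variable is within $\sqrt{\delta'}$ of the average, $\max\{\|x_i^N-x^*\|^2,\|y_i^N-y^*\|^2\}\le\mathcal{O}(\epsilon)$, yielding the claimed $N$. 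The communication count $N_{comm} = \mathcal{O}(\frac{1}{1-\theta}\kappa\log(D''/\epsilon)\log(D'/\epsilon))$ follows from $N$ iterations times $T^k = \mathcal{O}(\kappa\log(D'/\epsilon))$ consensus steps each; the oracle-call count at node $i$ is $2N$ (two batched gradients per iteration) plus the contribution of the batch sizes $r_{f,i},r_{g,i}$ needed for $\Sigma^2 = \mathcal{O}((1-\theta)\epsilon)$, which unpacking $\Sigma^2$ from Theorem~\ref{th4} with the node-wise variances $\sigma_{f,i}^2,\sigma_{g,i}^2$ gives exactly the stated $N_{comp}^i$. Finally, the four parameter regimes for $1/(1-\theta)$ and $\omega$ are inherited verbatim from the four cases \eqref{eq:th4_bound_1}--\eqref{eq:th4_bound_4} of Theorem~\ref{th4}, with the global averaged constants $L_x,\mu_x,\ldots$ in place of the single-machine ones, which is legitimate because Lemma~\ref{lemma:3} produces a model with exactly those global constants.
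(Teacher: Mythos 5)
Your overall route is the same as the paper's: rewrite Algorithm~\ref{alg:DAPDG} in terms of node averages so that $\ovl{x}^k,\ovl{y}^k$ obey the recursions of Algorithm~\ref{alg:ISAPDG}, use Lemma~\ref{lemma:3} to convert the consensus error $\delta'$ into the oracle inexactness $\delta_x,\delta_y$ and the batched variance, maintain $\E\|X^k-\ovl{X^k}\|\leq\sqrt{\delta'}$ by induction with a fixed number of consensus steps per iteration, and then read off $N$, $N_{comm}$, $N_{comp}^i$ and the four parameter regimes from Theorem~\ref{th4}. The structure, the choice $T=\mathcal{O}\bigl(\tfrac{\tau}{\lambda}\log\tfrac{D'}{\delta'}\bigr)$, and the way the two logarithms multiply are all as in Appendix~\ref{app:consensus_iterations}.

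There is, however, one concrete flaw in your sketch of the expansion step. You claim that $L_{x,i}$-smoothness lets you control $\|\nabla^r F(X)-\ovl{\nabla^r F(X)}\|$ by $\|X-\ovl{X}\|$ plus stochastic noise, so that the per-iteration growth of the consensus error is a polynomial function of the consensus errors and the variances alone. This is false in the heterogeneous setting: the local functions $f_i$ differ across nodes, so even at exact consensus ($X=\ovl{X}$) the columns $\nabla f_i(x)$ are not equal and the centered gradient does not vanish. Consequently your proposed recursion does not close, and $D'$ cannot be expressed only through consensus errors and variances. The paper's Lemma~\ref{l5} instead bounds the full norm $\E\|\nabla^r F(X_g^k,\xi_{x,k})\|$ by the triangle inequality through $\|\nabla F(X^*)\|$, the term $L_{x}\sqrt{n}\,\E\|\ovl{x_g^k}-x^*\|$, the local smoothness times $\sqrt{\delta'}$, and the batched variance; the distance term is in turn bounded by a constant $M_x$ obtained from the convergence estimate of Theorem~\ref{th4} applied to the averaged iterates (together with auxiliary recursions showing $\E\|\ovl{x_g^k}-x^*\|\leq M_x$ and $\E\|\ovl{x_f^k}-x^*\|\leq M_x$). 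So the induction must simultaneously carry the consensus invariant \emph{and} the boundedness of the averaged trajectory, and it is precisely the heterogeneity term $\|\nabla F(X^*)\|$ and the constants $M_x,M_y$ that populate $D'$. You gesture at this when you worry about $\theta^k\Psi^0$ and $\Sigma^2$ entering $D'$, but the mechanism you propose for bounding the gradient deviation would not produce it; once you replace your centered-gradient claim with the paper's bound on the uncentered gradient norm, the rest of your argument goes through.
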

    The proof is provided in Appendix \ref{app:consensus_iterations}.
    
    The result associated with the number of communication calls is provided under the time-varying graph conditions. However, in static graph setting we can use Chebyshev-Accelerated subroutine \cite{ScamanLower} to get $\sqrt{\kappa}$ dependence from the characteristic number of the network. Therefore, our result is optimal up to the logarithmic factor according to \cite{kovalev2022optimal} (when $L_x=L_y=L_{xy}$ and $\mu_x=\mu_y$). Our stochastic oracle calls result is also optimal according to \cite{beznosikov2020distributed}. The deterministic side for coupling term is optimal (up to logarithmic factor for communications) according to \cite{rogozin2021decentralized}.
    
    It is worth mentioning, that this result only serves the case of $\mu_x,\mu_y>0$ due to specificity of analysis of basis algorithm. The Lemma~\ref{lemma:3} requires non-zero $\mu_g$, therefore if we wanted to extend our result to convex-concave case, we would need to use regularization trick, this would be possible with duality gap criterion, not in our case.
    
    The work of A. Gasnikov was supported by a grant for research centers in the field of artificial intelligence, provided by the Analytical Center for the Government of the Russian Federation in accordance with the subsidy agreement (agreement identifier 000000D730321P5Q0002) and the agreement with the Ivannikov Institute for System Programming of the Russian Academy of Sciences dated November 2, 2021 No. 70-2021-00142.
    
		\bibliographystyle{abbrv}
		\bibliography{references}
			
			
			
			
			
			
			

		\newpage
		\appendix
		{\large\textbf{Supplementary material}}
		\section{Inexact setting}\label{app:inexact_setting}
		
		\subsection{Proof of the Theorem~\ref{th4}}
		
		\noindent
		
		\begin{lemma}
			Let us introduce several definitions.
			\begin{equation}
				\tau_x = (\sigma_x^{-1}+1/2)^{-1},\label{eq:1}
			\end{equation}
			\begin{equation}
				\alpha_x=\mu_x,
			\end{equation}
			\begin{equation}
				\beta_x=\min\left\{ \frac{1}{2L_y}, \frac{1}{2\eta_x L_{xy}^2} \right\}.
			\end{equation}
			Then the following inequality holds
			
			\begin{equation}
				\begin{split}
					\frac{1}{\eta_x}\mathbb{E}_{\xi_x^k, \xi_y^k}\|x^{k+1} - x^*\|^2 &\leq \left( \frac{1}{\eta_x}-\mu_x-\beta_x \mu_{yx}^2 \right)\|x^k - x^*\|^2
					\\&
					+\left( \mu_x + L_x\sigma_x - \frac{1}{2\eta_x} \right)\mathbb{E}_{\xi_x^k, \xi_y^k}\|x^{k+1} - x^k\|^2 
					\\&
					+B_g(y_g^k,y^*)
					-B_f(x_g^k,x^*)
					-\frac{2}{\sigma_x}\E_{\xi_x^k\,\xi_y^k}B_f(x_f^{k+1},x^*)
					\\&
					+\left( \frac{2}{\sigma_x} - 1 \right)B_f(x_f^k,x^*)
					-2\mathbb{E}_{\xi_x^k, \xi_y^k}\langle A^T(y_m^k-y^*), x^{k+1} - x^* \rangle
					\\&
					+\delta_y + \left( \frac{4}{\sigma_x}+1 \right)\delta_x + \beta_x\sigma_g^2 + 2\eta_x\sigma_f^2.
				\end{split}\label{L7}
			\end{equation}
		\end{lemma}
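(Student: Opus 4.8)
The plan is to prove \eqref{L7} as a one-step Lyapunov-type bound for the $x$-update of Algorithm~\ref{alg:ISAPDG}, adapting the APDG analysis of \cite{minimax2112} and carrying the inexactness parameters $\delta_x,\delta_y$ and the batch variances $\sigma_f^2,\sigma_g^2$ through every estimate. \textbf{Step 1 (expand the squared distance and insert the update).} Use the identity $\|a-c\|^2=\|b-c\|^2+2\langle a-b,\,a-c\rangle-\|a-b\|^2$ with $a=x^{k+1}$, $b=x^k$, $c=x^*$, divide by $\eta_x$, and substitute line~\ref{ISAPDG:line:4} for $x^{k+1}-x^k$. This rewrites $\tfrac1{\eta_x}\|x^{k+1}-x^*\|^2$ as $\tfrac1{\eta_x}\|x^k-x^*\|^2-\tfrac1{\eta_x}\|x^{k+1}-x^k\|^2$ plus the four inner products $2\alpha_x\langle x_g^k-x^k,\,x^{k+1}-x^*\rangle$, $-2\beta_x\langle A^\top(Ax^k-\nabla g_k),\,x^{k+1}-x^*\rangle$, $-2\langle\nabla f_k,\,x^{k+1}-x^*\rangle$, $-2\langle A^\top y_m^k,\,x^{k+1}-x^*\rangle$. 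Peel off $-2\langle A^\top y^*,\,x^{k+1}-x^*\rangle$ from the last product (so that the term $-2\langle A^\top(y_m^k-y^*),\,x^{k+1}-x^*\rangle$ of \eqref{L7} remains untouched, to be cancelled later against the $y$-update estimate) and fold it into the $\nabla f_k$ term via the saddle-point relations $\nabla f(x^*)=-A^\top y^*$, $Ax^*=\nabla g(y^*)$.

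\textbf{Step 2 (remove the stochasticity).} Write $\nabla f_k=\nabla f_\delta(x_g^k)+\xi$, $\nabla g_k=\nabla g_\delta(y_g^k)+\zeta$ with zero-mean batch errors $\xi,\zeta$ that, by Assumption~\ref{ass:6}, are conditionally independent with second moments bounded by $\sigma_f^2$ and $\sigma_g^2$. Decompose $x^{k+1}=\widehat x^{k+1}-\eta_x\xi+\eta_x\beta_x A^\top\zeta$, where $\widehat x^{k+1}$ uses only the mean gradients and is $\xi^{k-1}$-measurable, hence independent of $(\xi,\zeta)$. Taking $\mathbb{E}_{\xi_x^k,\xi_y^k}$ kills all cross terms between a noise and $\widehat x^{k+1}$ or $x^*$, and the $\xi$--$\zeta$ cross term by independence; Young's inequality on what remains (using $\beta_x L_{xy}^2\le\tfrac1{2\eta_x}$ for the $A^\top\zeta$ contribution) bounds it by $2\eta_x\sigma_f^2+\beta_x\sigma_g^2$, and the residual second moments of $x^{k+1}-x^k$ are absorbed into the coefficient of $\|x^{k+1}-x^k\|^2$.

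\textbf{Step 3 (apply the inexact models and Assumption~\ref{ass:5}).} It remains to bound a deterministic expression in $\nabla f_\delta(x_g^k)$, $\nabla g_\delta(y_g^k)$. For the coupling term, write $Ax^k-\nabla g_\delta(y_g^k)=A(x^k-x^*)-(\nabla g_\delta(y_g^k)-\nabla g(y^*))$: the $A(x^k-x^*)$ part yields $-\beta_x\mu_{yx}^2\|x^k-x^*\|^2$ through $\|A^\top v\|\ge\mu_{yx}\|v\|$ (Assumption~\ref{ass:5}) after a Young step that spends $\beta_x L_{xy}^2\le\tfrac1{2\eta_x}$ of $\|x^{k+1}-x^k\|^2$, while the $\nabla g_\delta(y_g^k)-\nabla g(y^*)$ part yields $+B_g(y_g^k,y^*)+\delta_y$ by combining $\|A^\top v\|\le L_{xy}\|v\|$, the bound $\beta_x\le\tfrac1{2L_y}$, and the smoothness-type consequence of the $(\delta_y,L_y,\mu_y)$-model $\|\nabla g_\delta(y_g^k)-\nabla g(y^*)\|^2\lesssim L_y\bigl(B_g(y_g^k,y^*)+\delta_y\bigr)$. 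For the $\nabla f_\delta(x_g^k)$ term, insert the identity $x^{k+1}-x^*=\tfrac1{\sigma_x}\bigl(x_f^{k+1}-x_g^k\bigr)+\bigl(\sigma_x^{-1}+\tfrac12\bigr)\bigl(x_g^k-x^*\bigr)-\bigl(\sigma_x^{-1}-\tfrac12\bigr)\bigl(x_f^k-x^*\bigr)$, which follows from $x_g^k=\tau_x x^k+(1-\tau_x)x_f^k$, $x_f^{k+1}=x_g^k+\sigma_x(x^{k+1}-x^k)$, and the choice $\tau_x=(\sigma_x^{-1}+\tfrac12)^{-1}$; then use the smoothness side of the $(\delta_x,L_x,\mu_x)$-model on the $x_f^{k+1}$ piece (producing $-\tfrac2{\sigma_x}B_f(x_f^{k+1},x^*)$, the term $L_x\sigma_x\|x^{k+1}-x^k\|^2$ via $\|x_f^{k+1}-x_g^k\|^2=\sigma_x^2\|x^{k+1}-x^k\|^2$, and $\delta_x$ contributions), the strong-convexity side on the $x_g^k$ piece (producing $-B_f(x_g^k,x^*)$ once combined with the $+\tfrac2{\sigma_x}B_f(x_g^k,x^*)$ left over from the $x_f^{k+1}$ piece, plus a $\mu_x\|x_g^k-x^*\|^2$-type slack), and the appropriate one-sided model bound on the $x_f^k$ piece (producing $+(\tfrac2{\sigma_x}-1)B_f(x_f^k,x^*)$). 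Finally, the term $2\alpha_x\langle x_g^k-x^k,\,x^{k+1}-x^*\rangle$ with $\alpha_x=\mu_x$ and $x_g^k-x^k=(1-\tau_x)(x_f^k-x^k)$ is used to soak up the leftover quadratics $\|x_g^k-x^*\|^2$ and to turn the coefficients of $\|x^k-x^*\|^2$ and $\|x^{k+1}-x^k\|^2$ into $\tfrac1{\eta_x}-\mu_x-\beta_x\mu_{yx}^2$ and $\mu_x+L_x\sigma_x-\tfrac1{2\eta_x}$; summing all $\delta_x$'s gives the factor $\tfrac4{\sigma_x}+1$.

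\textbf{Main obstacle.} The genuinely delicate part is Step~3: reorganizing the $\nabla f_\delta(x_g^k)$ (and the cross-)terms so that (i) the three Bregman-type quantities $B_f(x_f^{k+1},x^*),B_f(x_g^k,x^*),B_f(x_f^k,x^*)$ come out with exactly the coefficients $-\tfrac2{\sigma_x},-1,\tfrac2{\sigma_x}-1$, (ii) every auxiliary quadratic ($\|x_g^k-x^*\|^2$, $\|A(x^k-x^*)\|^2$, residual $\|x^{k+1}-x^k\|^2$) cancels or is absorbed, and (iii) each use of an inexact-model inequality is charged the right amount of $\delta$. This is precisely what pins down $\tau_x=(\sigma_x^{-1}+\tfrac12)^{-1}$, $\alpha_x=\mu_x$, and the two-way bound on $\beta_x$; a wrong Young weight anywhere would leave a stray $\|x^{k+1}-x^*\|^2$ or an uncancelled quadratic on the right-hand side. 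A secondary point requiring care is the measurability claim in Step~2 that makes every stochastic cross term vanish.
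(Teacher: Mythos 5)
Your proposal is correct and follows essentially the same route as the paper's proof: expand $\tfrac1{\eta_x}\|x^{k+1}-x^*\|^2$ via the update line and the parallelogram rule, kill the stochastic cross terms by conditional expectation and independence of $\xi_x^k,\xi_y^k$ (yielding exactly the $2\eta_x\sigma_f^2+\beta_x\sigma_g^2$ terms), invoke the optimality conditions $\nabla g(y^*)=\mA x^*$ and $\nabla f(x^*)=-\mA^\top y^*$, and then feed the three pieces of $x^{k+1}-x^*$ into the inexact-model inequalities to produce the Bregman terms with coefficients $-\tfrac{2}{\sigma_x}$, $-1$, $\tfrac{2}{\sigma_x}-1$ and the $\bigl(\tfrac{4}{\sigma_x}+1\bigr)\delta_x$ tally. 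Your Step 3 identity for $x^{k+1}-x^*$ is an algebraically equivalent repackaging of the paper's decomposition $x^{k+1}-x^k+x^k-x_g^k+x_g^k-x^*$ combined with the algorithm's lines and $\tau_x=(\sigma_x^{-1}+\tfrac12)^{-1}$, so there is no substantive difference in the argument.
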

		
		\begin{proof}
			
			The proof is similar to the proof of Lemma B.2 in \hyperlink{2}{[2]}. But we should expand the proof to cover inexact stochastic case. In order to do this we need replace several inequalities in the proof from above article on corresponding inequalities with inexact stochastic oracle.
			
			In the following analysis, we will need the following inequalities (the term $\xi^{k-1}$ is omitted).
			
			\begin{equation}
				\frac{1}{2L_y}\mathbb{E}_{\xi_y^k}\|\nabla g_{\delta}(y_g^k, \xi_y^k)-\nabla g(y^*)\|^2 \leq B_g(y_g^k,y^*)+\delta_y+\frac{\sigma_g^2}{2L_y},\label{diff:1}
			\end{equation}
			
			\begin{equation}
				\mathbb{E}_{\xi_x^k}\langle \nabla f_{\delta}(x_g^k, \xi_x^k) - \nabla f(x^*), x^{k+1} - x^* \rangle \geq \E_{\xi_x^k}\langle \nabla f_{\delta}(x_g^k) - \nabla f(x^*), x^{k+1} - x^* \rangle - \eta_x\sigma_f^2,\label{diff:2}
			\end{equation}
			
			\begin{equation}
				\langle \nabla f_{\delta}(x^g_k) - \nabla f(x^*), x_f^{k+1} - x_g^k \rangle \geq B_f(x_f^{k+1},x^*) - B_f(x_g^k,x^*)-\frac{L_x}{2}\|x_f^{k+1}-x_g^k\|^2-\delta_x,\label{diff:3}
			\end{equation}

			\begin{equation}
				2\langle \nabla f_{\delta}(x_g^k) - \nabla f(x^*), x_g^k - x^* \rangle \geq 2B_f(x_g^k,x^*) + \mu_x\|x_g^k-x^*\|^2 - 2\delta_x,\label{diff:4}
			\end{equation}
			
			\begin{equation}
				\langle \nabla f_{\delta}(x^g_k) - \nabla f(x^*), x_f^k - x_g^k \rangle \leq B_f(x_f^k,x^*)-B_f(x_g^k, x^*)+\delta_x.\label{diff:5}
			\end{equation}
			
			Let us prove these inequalities.
			
			Inequality \eqref{diff:1}. Using $\mathbb{E}_{\xi_y^k}\nabla g_{\delta}(y_g^k, \xi_y^k) = \nabla g_{\delta}(y_g^k)$ and according to Theorem 1 from \cite{First-Order-Methods-of-Smooth-Convex-Optimization-with-Inexact-Oracle}, we have
			\begin{align*}
				\frac{1}{2L_y}\mathbb{E}_{\xi_y^k}\|\nabla g_{\delta}(y_g^k, \xi_y^k)-\nabla g(y^*)\|^2 &= \frac{1}{2L_y}\mathbb{E}_{\xi_y^k}\|\nabla g_{\delta}(y_g^k, \xi_y^k)-\nabla g_{\delta}(y_g^k)\|^2
				\\&
				+ \frac{1}{2L_y}\|\nabla g_{\delta}(y_g^k)-\nabla g(y^*)\|^2 \\& \leq 
				g(y_g^k)-g(y^*)-\langle \nabla g(y^*), y_g^k - y^* \rangle + \delta_y + \frac{\sigma_g^2}{2L_y}
				\\&= B_g(y_g^k,y^*)+\delta_y+\frac{\sigma_g^2}{2L_y}.
			\end{align*}
			
			We choose inexact oracle $(\hat{g}_{\delta}, \nabla \hat{g}_{\delta})$ such that it is the same as $(g_{\delta}, \nabla g_{\delta})$ at all points except $y^*$, and at $y^*$ it equals $(g(y^*), \nabla g(y^*))$.
			
			Inequality \eqref{diff:2}:
			\begin{align*}
				\mathbb{E}_{\xi_x^k}\langle \nabla f_{\delta}(x_g^k, \xi_x^k) - \nabla f(x^*), x^{k+1} - x^* \rangle &= \mathbb{E}_{\xi_x^k}\langle \nabla f_{\delta}(x_g^k, \xi_x^k) -  \nabla f_{\delta}(x_g^k), x^{k+1} - x^* \rangle 
				\\&+ \langle \nabla f_{\delta}(x_g^k) - \nabla f(x^*), x^{k+1} - x^* \rangle.
			\end{align*}
			
			Using Line~\ref{ISAPDG:line:4} of the Algorithm~\ref{alg:ISAPDG} we obtain
			\begin{align*}
				\mathbb{E}_{\xi_x^k}\langle \nabla f_{\delta}(x_g^k, \xi_x^k) - \nabla f(x^*), x^{k+1} - x^* \rangle &= \mathbb{E}_{\xi_x^k}\langle \nabla f_{\delta}(x_g^k, \xi_x^k) -  \nabla f_{\delta}(x_g^k), -\eta_x  \nabla f_{\delta}(x_g^k, \xi_x^k) \rangle 
				\\&+ \E_{\xi_x^k}\langle \nabla f_{\delta}(x_g^k) - \nabla f(x^*), x^{k+1} - x^* \rangle 
				\\&= \E_{\xi_x^k}\langle \nabla f_{\delta}(x_g^k) - \nabla f(x^*), x^{k+1} - x^* \rangle 
				\\&- \eta_x\E_{\xi_x^k}\|\nabla f_{\delta}(x_g^k, \xi_x^k) - \nabla f_{\delta}(x_g^k)\|^2 
				\\&\geq  \E_{\xi_x^k} \langle \nabla f_{\delta}(x_g^k) - \nabla f(x^*), x^{k+1} - x^* \rangle -\eta_x \sigma_f^2.
			\end{align*}
			
			Inequality \eqref{diff:3}:
			\begin{align*}
				\langle \nabla f_{\delta}(x^g_k) - \nabla f(x^*), x_f^{k+1} - x_g^k \rangle
				&\geq f(x_f^{k+1}) - f_{\delta}(x_g^k)-\frac{L_x}{2}\|x_f^{k+1}-x_g^k\|^2
				\\&-\delta_x - \langle \nabla f(x^*), x_f^{k+1} - x_g^k \rangle 
				\\&
				\geq f(x_f^{k+1}) - f(x_g^k)-\frac{L_x}{2}\|x_f^{k+1}-x_g^k\|^2
				\\&-\delta_x - \langle \nabla f(x^*), x_f^{k+1} - x_g^k \rangle
				\\& = B_f(x_f^{k+1},x^*) - B_f(x_g^k,x^*)-\frac{L_x}{2}\|x_f^{k+1}-x_g^k\|^2-\delta_x.
			\end{align*}
			Inequality \eqref{diff:4}:
			\begin{align*}
				2(f(x^*)-f(x_g^k))-2\langle \nabla f_{\delta}(x_g^k), x^*-x_g^k \rangle + 2\delta_x &\geq 2(f(x^*)-f_{\delta}(x_g^k))-2\langle \nabla f_{\delta}(x_g^k), x^*-x_g^k \rangle
				\\&
				\geq \mu_x\|x_g^k-x^*\|^2.
			\end{align*}
			
			Inequality \eqref{diff:5}:
			\begin{align*}
				\langle \nabla f_{\delta}(x^g_k) - \nabla f(x^*), x_f^k - x_g^k \rangle &\leq f(x_f^k) - f_{\delta}(x_g^k)-\langle \nabla f(x^*), x_f^k - x_g^k \rangle
				\\&
				\leq f(x_f^k) - f(x_g^k)-\langle \nabla f(x^*), x_f^k - x_g^k \rangle+\delta_x 
				\\&= B_f(x_f^k,x^*)-B_f(x_g^k)+\delta_x.
			\end{align*}
			
			Using Line~\ref{ISAPDG:line:4} of the Algorithm~\ref{alg:ISAPDG} we get
			\begin{align*}
				\frac{1}{\eta_x}\sqn{x^{k+1} - x^*}
				&=
				\frac{1}{\eta_x}\sqn{x^{k} - x^*} + \frac{2}{\eta_x}\<x^{k+1} - x^k,x^{k+1} - x^*> - \frac{1}{\eta_x}\sqn{x^{k+1} - x^k}
				\\&=
				\frac{1}{\eta_x}\sqn{x^{k} - x^*} + 2\alpha_x\<x_g^k - x^k,x^{k+1}- x^*>
				\\&-
				2\beta_x\<\mA^\top(\mA x^k - \nabla g_{\delta}(y_g^k, \xi_y^k, \xi^{k-1}),x^{k+1} - x^*>
				\\&-
				2\<\nabla f_{\delta}(x_g^k, \xi_x^k, \xi^{k-1}) + \mA^\top y_m^k,x^{k+1} - x^*>
				-
				\frac{1}{\eta_x}\sqn{x^{k+1} - x^k}.
			\end{align*}
			Using the parallelogram rule we get
			\begin{align*}
				\frac{1}{\eta_x}\sqn{x^{k+1} - x^*}
				&=
				\frac{1}{\eta_x}\sqn{x^{k} - x^*}
				\\&+
				\alpha_x\left(\sqn{x_g^k - x^*}  - \sqn{x_g^k - x^{k+1}} - \sqn{x^{k} - x^*}+\sqn{x^{k+1} - x^k}\right)
				\\&-
				2\beta_x\<\mA x^k - \nabla g_{\delta}(y_g^k, \xi_y^k, \xi^{k-1}),\mA(x^{k+1} - x^*)>
				\\&-
				2\<\nabla f_{\delta}(x_g^k, \xi_x^k, \xi^{k-1}) + \mA^\top y_m^k,x^{k+1} - x^*>
				-
				\frac{1}{\eta_x}\sqn{x^{k+1} - x^k}.
			\end{align*}
			Using the optimality condition $\nabla g(y^*) = \mA x^*$, which follows from $\nabla_y F(x^*, y^*) = 0$, and the parallelogram rule we get
			\begin{align*}
				\frac{1}{\eta_x}\sqn{x^{k+1} - x^*}
				&=
				\frac{1}{\eta_x}\sqn{x^{k} - x^*}
				\\&+
				\alpha_x\left(\sqn{x_g^k - x^*}  - \sqn{x_g^k - x^{k+1}} - \sqn{x^{k} - x^*}+\sqn{x^{k+1} - x^k}\right)
				\\&+
				\beta_x\left(\sqn{\mA(x^{k+1} - x^k)}  - \sqn{\mA(x^k - x^*)}\right) 
				\\&+
				\beta_x\left(\sqn{\nabla g_{\delta}(y_g^k, \xi_y^k, \xi^{k-1}) - \nabla g(y^*)} - \sqn{\nabla g_{\delta}(y_g^k, \xi_y^k, \xi^{k-1}) - \mA(x^{k+1})}\right)
				\\&-
				2\<\nabla f_{\delta}(x_g^k, \xi_x^k, \xi^{k-1}) + \mA^\top y_m^k,x^{k+1} - x^*>
				-
				\frac{1}{\eta_x}\sqn{x^{k+1} - x^k}.
			\end{align*}
			Using Assumption~\ref{ass:5} and Equation~\ref{diff:1}, we get
			\begin{align*}
				\frac{1}{\eta_x}\E_{\xi_x^k, \xi_y^k}\sqn{x^{k+1} - x^*}
				&\leq
				\frac{1}{\eta_x}\sqn{x^{k} - x^*}
				+
				\alpha_x\sqn{x_g^k - x^*}  - \alpha_x\sqn{x^{k} - x^*}
				\\&+
				\alpha_x\E_{\xi_x^k, \xi_y^k}\sqn{x^{k+1} - x^k}
				+
				\beta_xL_{xy}^2\E_{\xi_x^k, \xi_y^k}\sqn{x^{k+1} - x^k}
				\\&-
				\beta_x\mu_{yx}^2\sqn{x^k - x^*}
				+
				2\beta_xL_y\bg_g(y_g^k,y^*) + 2\beta_xL_y\delta_y(\xi^{k-1})+\beta_x\sigma_g^2
				\\&-
				2\E_{\xi_x^k, \xi_y^k}\<\nabla f_{\delta}(x_g^k, \xi_x^k, \xi^{k-1}) + \mA^\top y_m^k,x^{k+1} - x^*>
				\\&-
				\frac{1}{\eta_x}\E_{\xi_x^k, \xi_y^k}\sqn{x^{k+1} - x^k}
				=
				\left(\frac{1}{\eta_x} - \alpha_x- \beta_x\mu_{yx}^2\right)\sqn{x^{k} - x^*}
				\\&+
				\left(\beta_xL_{xy}^2 + \alpha_x-\frac{1}{\eta_x}\right)
				\E_{\xi_x^k, \xi_y^k}\sqn{x^{k+1} - x^k}
				+
				2\beta_xL_y\bg_g(y_g^k,y^*)
				\\&+
				\alpha_x\sqn{x_g^k - x^*}
				-
				2\E_{\xi_x^k, \xi_y^k}\<\nabla f_{\delta}(x_g^k, \xi_x^k, \xi^{k-1}) + \mA^\top y_m^k,x^{k+1} - x^*> 
				\\&+
				2\beta_xL_y\delta_y(\xi^{k-1})+\beta_x\sigma_g^2.
			\end{align*}
			Using the optimality condition $\nabla f(x^*) + \mA^\top y^* = 0$, which follows from $\nabla_x F(x^*, y^*) = 0$ and Equation~\ref{diff:2} , we get
			\begin{align*}
				\frac{1}{\eta_x}\E_{\xi_x^k, \xi_y^k}\sqn{x^{k+1} - x^*}
				&\leq
				\left(\frac{1}{\eta_x} - \alpha_x - \beta_x\mu_{yx}^2\right)\sqn{x^{k} - x^*}
				\\&+
				\left(\beta_xL_{xy}^2 + \alpha_x-\frac{1}{\eta_x}\right)
				\E_{\xi_x^k, \xi_y^k}\sqn{x^{k+1} - x^k}
				+
				2\beta_xL_y\bg_g(y_g^k,y^*)
				\\&+
				\alpha_x\sqn{x_g^k - x^*}
				-
				2\E_{\xi_x^k, \xi_y^k}\<\nabla f_{\delta}(x_g^k, \xi_x^k, \xi^{k-1}) - \nabla f(x^*),x^{k+1} - x^*>
				\\&-
				2\E_{\xi_x^k, \xi_y^k}\<\mA^\top (y_m^k - y^*),x^{k+1} - x^*>
				+
				2\beta_xL_y\delta_y(\xi^{k-1})+\beta_x\sigma_g^2
				\\&=
				\left(\frac{1}{\eta_x} - \alpha_x - \beta_x\mu_{yx}^2\right)\sqn{x^{k} - x^*}
				\\&+
				\left(\beta_xL_{xy}^2 + \alpha_x-\frac{1}{\eta_x}\right)
				\E_{\xi_x^k, \xi_y^k}\sqn{x^{k+1} - x^k}
				\\&+
				2\beta_xL_y\bg_g(y_g^k,y^*)
				+
				\alpha_x\sqn{x_g^k - x^*}
				\\&-
				2\E_{\xi_x^k, \xi_y^k}\<\nabla f_{\delta}(x_g^k, \xi^{k-1}) - \nabla f(x^*),x^{k+1}- x^k + x^k - x_g^k + x_g^k - x^*>
				\\&-
				2\E_{\xi_x^k, \xi_y^k}\<\mA^\top (y_m^k - y^*),x^{k+1} - x^*> + 2\beta_xL_y\delta_y(\xi^{k-1})+\beta_x\sigma_g^2+2\eta_x\sigma_f^2.
			\end{align*}
			Using $\mu_y$-strong convexity of $f$ and Lines~\ref{ISAPDG:line:2} and ~\ref{ISAPDG:line:6} of the Algorithm~\ref{alg:ISAPDG} and Equation~\ref{diff:4} we get
			\begin{align*}
				\frac{1}{\eta_x}\E_{\xi_x^k, \xi_y^k}\sqn{x^{k+1} - x^*}
				&\leq
				\left(\frac{1}{\eta_x} - \alpha_x - \beta_x\mu_{yx}^2\right)\sqn{x^{k} - x^*}
				\\
				&\quad+
				\left(\beta_xL_{xy}^2 + \alpha_x-\frac{1}{\eta_x}\right)
				\E_{\xi_x^k, \xi_y^k}\sqn{x^{k+1} - x^k}
				+
				2\beta_xL_y\bg_g(y_g^k,y^*)
				\\
				&\quad+
				\alpha_x\sqn{x_g^k - x^*}
				-
				\frac{2}{\sigma_x}\E_{\xi_x^k, \xi_y^k}\<\nabla f_{\delta}(x_g^k, \xi^{k-1}) - \nabla f(x^*),x_f^{k+1}- x_g^k>
				\\
				&\quad+
				\frac{2(1-\tau_x)}{\tau_x}\<\nabla f_{\delta}(x_g^k, \xi^{k-1}) - \nabla f(x^*),x_f^{k}- x_g^k>
				-
				2\bg_f(x_g^k,x^*)
				\\
				&\quad-
				\mu_x\sqn{x_g^k -x^*} + 2\delta_x(\xi^{k-1})
				-
				2\E_{\xi_x^k, \xi_y^k}\<\mA^\top (y_m^k - y^*),x^{k+1} - x^*> 
				\\
				&\quad+
				2\beta_xL_y\delta_y(\xi^{k-1})+\beta_x\sigma_g^2+2\eta_x\sigma_f^2
				\\
				&=
				\left(\frac{1}{\eta_x} - \alpha_x - \beta_x\mu_{yx}^2\right)\sqn{x^{k} - x^*}
				\\
				&\quad+
				\left(\beta_xL_{xy}^2 + \alpha_x-\frac{1}{\eta_x}\right)
				\E_{\xi_x^k, \xi_y^k}\sqn{x^{k+1} - x^k}
				+
				(\alpha_x-\mu_x)\sqn{x_g^k - x^*}
				\\
				&\quad+
				2\beta_xL_y\bg_g(y_g^k,y^*)
				-
				2\bg_f(x_g^k,x^*) 
				\\
				&\quad-
				\frac{2}{\sigma_x}\E_{\xi_x^k, \xi_y^k}\<\nabla f_{\delta}(x_g^k, \xi^{k-1}) - \nabla f(x^*),x_f^{k+1}- x_g^k>
				\\
				&\quad+
				\frac{2(1-\tau_x)}{\tau_x}\<\nabla f_{\delta}(x_g^k, \xi^{k-1}) - \nabla f(x^*),x_f^{k}- x_g^k>
				\\
				&\quad-
				2\E_{\xi_x^k, \xi_y^k}\<\mA^\top (y_m^k - y^*),x^{k+1} - x^*>
				\\
				&\quad+2\beta_xL_y\delta_y(\xi^{k-1})+\beta_x\sigma_g^2+2\eta_x\sigma_f^2+ 2\delta_x(\xi^{k-1}).
			\end{align*}
			Using Equation~\ref{diff:5}, we get
			\begin{align*}
				\frac{1}{\eta_x}\E_{\xi_x^k, \xi_y^k}\sqn{x^{k+1} - x^*}
				&\leq
				\left(\frac{1}{\eta_x} - \alpha_x - \beta_x\mu_{yx}^2\right)\sqn{x^{k} - x^*}
				\\&+
				\left(\beta_xL_{xy}^2 + \alpha_x-\frac{1}{\eta_x}\right)
				\E_{\xi_x^k, \xi_y^k}\sqn{x^{k+1} - x^k}
				+
				(\alpha_x-\mu_x)\sqn{x_g^k - x^*}
				\\&+
				2\beta_xL_y\bg_g(y_g^k,y^*)
				-
				2\bg_f(x_g^k,x^*) 
				\\&-
				\frac{2}{\sigma_x}\E_{\xi_x^k, \xi_y^k}\<\nabla f_{\delta}(x_g^k, \xi^{k-1}) - \nabla f(x^*),x_f^{k+1}- x_g^k>
				\\&+
				\frac{2(1-\tau_x)}{\tau_x}\left(\bg_f(x_f^k,x^*) - \bg_f(x_g^k,x^*)\right)
				\\&-
				2\E_{\xi_x^k, \xi_y^k}\<\mA^\top (y_m^k - y^*),x^{k+1} - x^*>
				\\&+2\beta_xL_y\delta_y(\xi^{k-1})+\beta_x\sigma_g^2+2\eta_x\sigma_f^2+ 2\delta_x(\xi^{k-1})+\frac{2(1-\tau_x)}{\tau_x}\delta_x(\xi^{k-1}).
			\end{align*}
			Using Equation~\ref{diff:3}, we get
			\begin{align*}
				\frac{1}{\eta_x}\E_{\xi_x^k, \xi_y^k}\sqn{x^{k+1} - x^*}
				&\leq
				\left(\frac{1}{\eta_x} - \alpha_x - \beta_x\mu_{yx}^2\right)\sqn{x^{k} - x^*}
				\\&+
				\left(\beta_xL_{xy}^2 + \alpha_x-\frac{1}{\eta_x}\right)
				\E_{\xi_x^k, \xi_y^k}\sqn{x^{k+1} - x^k}
				+
				(\alpha_x-\mu_x)\sqn{x_g^k - x^*}
				\\&+
				2\beta_xL_y\bg_g(y_g^k,y^*)
				-
				2\bg_f(x_g^k,x^*) 
				\\&-
				\frac{2}{\sigma_x}\E_{\xi_x^k, \xi_y^k}\left(\bg_f(x_f^{k+1},x^*) - \bg_f(x_g^k,x^*) - \frac{L_x}{2}\sqn{x_f^{k+1} - x_g^k}\right)
				\\&+
				\frac{2(1-\tau_x)}{\tau_x}\left(\bg_f(x_f^k,x^*) - \bg_f(x_g^k,x^*)\right)
				\\&-
				2\E_{\xi_x^k, \xi_y^k}\<\mA^\top (y_m^k - y^*),x^{k+1} - x^*>
				\\&+2\beta_xL_y\delta_y(\xi^{k-1})+\beta_x\sigma_g^2+2\eta_x\sigma_f^2+\left(\frac{2}{\tau_x}+\frac{2}{\sigma_x}\right)\delta_x(\xi^{k-1}).
			\end{align*}
			Using Line~\ref{ISAPDG:line:6} of the Algorithm~\ref{alg:ISAPDG} we get
			\begin{align*}
				\frac{1}{\eta_x}\E_{\xi_x^k, \xi_y^k}\sqn{x^{k+1} - x^*}
				&\leq
				\left(\frac{1}{\eta_x} - \alpha_x - \beta_x\mu_{yx}^2\right)\sqn{x^{k} - x^*}
				\\&+
				\left(\beta_xL_{xy}^2 + \alpha_x-\frac{1}{\eta_x}\right)
				\E_{\xi_x^k, \xi_y^k}\sqn{x^{k+1} - x^k}
				+
				(\alpha_x-\mu_x)\sqn{x_g^k - x^*}
				\\&+
				2\beta_xL_y\bg_g(y_g^k,y^*)
				-
				2\bg_f(x_g^k,x^*) 
				\\&-
				\frac{2}{\sigma_x}\E_{\xi_x^k, \xi_y^k}\left(\bg_f(x_f^{k+1},x^*) - \bg_f(x_g^k,x^*) - \frac{L_x\sigma_x^2}{2}\sqn{x^{k+1} - x^k}\right)
				\\&+
				\frac{2(1-\tau_x)}{\tau_x}\left(\bg_f(x_f^k,x^*) - \bg_f(x_g^k,x^*)\right)
				\\&-
				2\E_{\xi_x^k, \xi_y^k}\<\mA^\top (y_m^k - y^*),x^{k+1} - x^*>
				\\&+2\beta_xL_y\delta_y(\xi^{k-1})+\beta_x\sigma_g^2+2\eta_x\sigma_f^2+\left(\frac{2}{\tau_x}+\frac{2}{\sigma_x}\right)\delta_x(\xi^{k-1}).
			\end{align*}
			Transforming this inequality we get
			\begin{align*}
				\frac{1}{\eta_x}\E_{\xi_x^k, \xi_y^k}\sqn{x^{k+1} - x^*}
				&\leq \left(\frac{1}{\eta_x} - \alpha_x - \beta_x\mu_{yx}^2\right)\sqn{x^{k} - x^*}
				\\&+
				\left(\beta_xL_{xy}^2 + \alpha_x + L_x\sigma_x -\frac{1}{\eta_x}\right)
				\E_{\xi_x^k, \xi_y^k}\sqn{x^{k+1} - x^k}
				\\&+
				(\alpha_x-\mu_x)\sqn{x_g^k - x^*}
				+
				2\beta_xL_y\bg_g(y_g^k,y^*)
				+
				\left(\frac{2}{\sigma_x} - \frac{2}{\tau_x}\right)\bg_f(x_g^k,x^*) 
				\\&-
				\frac{2}{\sigma_x}\E_{\xi_x^k, \xi_y^k}\bg_f(x_f^{k+1},x^*) 
				+
				\left(\frac{2}{\tau_x} - 2\right)\bg_f(x_f^k,x^*) 
				\\&-
				2\E_{\xi_x^k, \xi_y^k}\<\mA^\top (y_m^k - y^*),x^{k+1} - x^*>
				\\&+2\beta_xL_y\delta_y(\xi^{k-1})+\beta_x\sigma_g^2+2\eta_x\sigma_f^2+\left(\frac{2}{\tau_x}+\frac{2}{\sigma_x}\right)\delta_x(\xi^{k-1}).
			\end{align*}
			Using the definition of $\tau_x$, $\alpha_x$ and $\beta_x$ we get
			\begin{align*}
				\frac{1}{\eta_x}\E_{\xi_x^k, \xi_y^k}\sqn{x^{k+1} - x^*}
				&\leq
				\left(\frac{1}{\eta_x} - \mu_x - \beta_x\mu_{yx}^2\right)\sqn{x^{k} - x^*}
				\\&+
				\left(\mu_x + L_x\sigma_x -\frac{1}{2\eta_x}\right)
				\E_{\xi_x^k, \xi_y^k}\sqn{x^{k+1} - x^k}
				\\&
				+
				\bg_g(y_g^k,y^*)
				-
				\bg_f(x_g^k,x^*) 
				\\&-
				\frac{2}{\sigma_x}\E_{\xi_x^k, \xi_y^k}\bg_f(x_f^{k+1},x^*) 
				+
				\left(\frac{2}{\sigma_x} - 1\right)\bg_f(x_f^k,x^*) 
				\\&-
				2\E_{\xi_x^k, \xi_y^k}\<\mA^\top (y_m^k - y^*),x^{k+1} - x^*>
				\\&+
				\delta_y(\xi^{k-1})+\left(\frac{4}{\sigma_x}+1 \right)\delta_x(\xi^{k-1})+\beta_x\sigma_g^2+2\eta_x\sigma_f^2.
			\end{align*}
			
		\end{proof}
		
		\begin{lemma}\label{L8}
			Let us introduce several definitions.
			\begin{equation}
				\tau_y = (\sigma_y^{-1}+1/2)^{-1},
			\end{equation}
			\begin{equation}
				\alpha_y=\mu_y,
			\end{equation}
			\begin{equation}
				\beta_y=\min\left\{ \frac{1}{2L_x}, \frac{1}{2\eta_y L_{xy}^2} \right\}.
			\end{equation}
			Then the following inequality holds
			
			\begin{equation}
				\begin{split}
					\frac{1}{\eta_y}\E_{\xi_x^k, \xi_y^k}\|y^{k+1} - y^*\|^2 &\leq \left( \frac{1}{\eta_y}-\mu_y-\beta_y \mu_{xy}^2 \right)\|y^k - y^*\|^2 
					\\&\quad+
					\left( \mu_y + L_y\sigma_y - \frac{1}{2\eta_y} \right)\E_{\xi_x^k, \xi_y^k}\|y^{k+1} - y^k\|^2
					\\&\quad+B_f(x_g^k,x^*)-B_g(y_g^k,y^*)-\frac{2}{\sigma_y}\mathbb{E}_{\xi_x^k, \xi_y^k}B_g(y_f^{k+1},y^*)
					\\&\quad+
					\left( \frac{2}{\sigma_y} - 1 \right)B_g(y_f^k,y^*)
					+2\E_{\xi_x^k, \xi_y^k}\langle A(x^{k+1}-x^*), y^{k+1} - y^* \rangle
					\\&\quad+
					\delta_x(\xi^{k-1}) + \left( \frac{4}{\sigma_y}+1 \right)\delta_y(\xi^{k-1}) + \beta_y\sigma_f^2 + 2\eta_y\sigma_g^2.
				\end{split}
			\end{equation}
		\end{lemma}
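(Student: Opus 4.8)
The plan is to reproduce the computation of the previous lemma under the formal duality
\[
x\leftrightarrow y,\quad f\leftrightarrow g,\quad \mA\leftrightarrow\mA^\top,\quad L_x\leftrightarrow L_y,\quad \mu_x\leftrightarrow\mu_y,\quad \mu_{yx}\leftrightarrow\mu_{xy},\quad \sigma_f\leftrightarrow\sigma_g,\quad \delta_x\leftrightarrow\delta_y,
\]
together with $\eta_x\leftrightarrow\eta_y$, $\tau_x\leftrightarrow\tau_y$, $\alpha_x\leftrightarrow\alpha_y$, $\beta_x\leftrightarrow\beta_y$, $\sigma_x\leftrightarrow\sigma_y$, using Line~\ref{ISAPDG:line:5} of Algorithm~\ref{alg:ISAPDG} in place of Line~\ref{ISAPDG:line:4}. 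First I would establish the five auxiliary estimates dual to \eqref{diff:1}--\eqref{diff:5}: namely
\begin{align*}
&\tfrac{1}{2L_x}\E_{\xi_x^k}\sqn{\nabla f_\delta(x_g^k,\xi_x^k)-\nabla f(x^*)}\le B_f(x_g^k,x^*)+\delta_x+\tfrac{\sigma_f^2}{2L_x},\\
&\E_{\xi_y^k}\angles{\nabla g_\delta(y_g^k,\xi_y^k)-\nabla g(y^*),\,y^{k+1}-y^*}\ge\E_{\xi_y^k}\angles{\nabla g_\delta(y_g^k)-\nabla g(y^*),\,y^{k+1}-y^*}-\eta_y\sigma_g^2,
\end{align*}
and the three $\delta_y$-inexact convexity/smoothness estimates for $g$ at the point pairs $(y_f^{k+1},y_g^k)$, $(y_g^k,y^*)$, $(y_f^k,y_g^k)$. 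As in the excerpt, each follows from the $(\delta,L,\mu)$-model inequalities for $g$, the unbiasedness $\E_{\xi_y^k}\nabla g_\delta(y_g^k,\xi_y^k)=\nabla g_\delta(y_g^k)$ and the variance bound $\sigma_g^2$; the cross-term estimate additionally uses Line~\ref{ISAPDG:line:5}, and the first estimate uses the $f$-model together with $\sigma_f^2$.

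Next I would expand $\tfrac1{\eta_y}\sqn{y^{k+1}-y^*}$ by the three-point identity applied to $(y^{k+1},y^k,y^*)$, substitute Line~\ref{ISAPDG:line:5}, and apply the parallelogram rule to the $\alpha_y(y_g^k-y^k)$ term and to the $\beta_y$-coupling term $2\beta_y\angles{\mA^\top y^k+\nabla f_k,\,\mA^\top(y^{k+1}-y^*)}$. Inserting the optimality conditions $\nabla g(y^*)=\mA x^*$ and $\nabla f(x^*)+\mA^\top y^*=0$ rewrites $\mA^\top y^k+\nabla f_k$ as $\mA^\top(y^k-y^*)+(\nabla f_k-\nabla f(x^*))$, which splits into a quadratic in $\mA^\top(y^{k+1}-y^k)$ and $\mA^\top(y^k-y^*)$ — bounded via Assumption~\ref{ass:5} by $\lambda_{\max}(\mA\mA^\top)\le L_{xy}^2$ from above and $\mu_{xy}^2$ from below — and a term $\sqn{\nabla f_\delta(x_g^k,\xi_x^k)-\nabla f(x^*)}$ absorbed by the first auxiliary inequality at cost $2\beta_yL_xB_f(x_g^k,x^*)+2\beta_yL_x\delta_x(\xi^{k-1})+\beta_y\sigma_f^2$. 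Then, following the excerpt verbatim, I would feed in the lower-model estimate for $g$ at $y_g^k$, the identities of Lines~\ref{ISAPDG:line:3} and~\ref{ISAPDG:line:7} relating $y_g^k,y_f^k,y_f^{k+1}$ to $y^k,y^{k+1}$, and the three $B_g$-estimates; collecting terms with $\alpha_y=\mu_y$, $\beta_y\le\tfrac1{2\eta_yL_{xy}^2}$, $\beta_y\le\tfrac1{2L_x}$ and $\tfrac1{\tau_y}=\tfrac1{\sigma_y}+\tfrac12$ makes the $\sqn{y^{k+1}-y^k}$ coefficient $\le\mu_y+L_y\sigma_y-\tfrac1{2\eta_y}$, the $\sqn{y^k-y^*}$ coefficient equal to $\tfrac1{\eta_y}-\mu_y-\beta_y\mu_{xy}^2$, the Bregman terms collapse to $+B_f(x_g^k,x^*)-B_g(y_g^k,y^*)-\tfrac2{\sigma_y}B_g(y_f^{k+1},y^*)+(\tfrac2{\sigma_y}-1)B_g(y_f^k,y^*)$, and the remaining error terms accumulate to $\delta_x(\xi^{k-1})+(\tfrac4{\sigma_y}+1)\delta_y(\xi^{k-1})+\beta_y\sigma_f^2+2\eta_y\sigma_g^2$, which is the claim.

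The only parts that are not a literal relabeling of the previous proof — hence where the bookkeeping needs care — come from the Gauss--Seidel asymmetry between Lines~\ref{ISAPDG:line:4} and~\ref{ISAPDG:line:5}: the $y$-update couples to the already-computed $x^{k+1}$ rather than to an extrapolated iterate, so the residual bilinear term appears as $+2\,\E_{\xi_x^k,\xi_y^k}\angles{\mA(x^{k+1}-x^*),\,y^{k+1}-y^*}$, with sign opposite to the $-2\angles{\mA^\top(y_m^k-y^*),x^{k+1}-x^*}$ of the dual lemma; I would check this sign carefully, since when the two lemmas are added in the global Lyapunov analysis these contributions must combine into $2\angles{\mA^\top(y^{k+1}-y_m^k),\,x^{k+1}-x^*}$, which is treated separately. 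A second, benign wrinkle is that $x^{k+1}$ itself contains $\nabla g_k$, so the dual noise enters $y^{k+1}$ through the matrix $-\eta_yI+\eta_x\eta_y\beta_x\mA\mA^\top$ rather than through a scalar; in the dual of \eqref{diff:2} this produces an extra $\eta_x\eta_y\beta_x\E\sqn{\mA^\top(\nabla g_\delta(y_g^k,\xi_y^k)-\nabla g_\delta(y_g^k))}\ge0$ which is simply dropped, leaving the stated $-\eta_y\sigma_g^2$. Everything else is a term-by-term transcription of the computation displayed above.
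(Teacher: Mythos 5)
Your proposal is correct and follows exactly the route the paper intends: the paper's own proof of this lemma is literally "similar to the proof of the previous lemma," i.e., the duality transcription you carry out. Your explicit handling of the two genuine asymmetries — the sign of the bilinear residual $+2\E\angles{\mA(x^{k+1}-x^*),\,y^{k+1}-y^*}$ coming from the Gauss--Seidel coupling to $x^{k+1}$ instead of an extrapolated point, and the fact that the $\xi_y^k$-noise enters $y^{k+1}$ through $-\eta_y I+\eta_x\eta_y\beta_x\mA\mA^\top$ so that the dropped term $\eta_x\eta_y\beta_x\E\sqn{\mA^\top(\nabla g_\delta(y_g^k,\xi_y^k)-\nabla g_\delta(y_g^k))}\ge 0$ still yields the $-\eta_y\sigma_g^2$ bound — is precisely the bookkeeping the paper leaves implicit, and both checks come out as you state.
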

		\begin{proof}
			The proof is similar to the proof of the previous lemma.
		\end{proof}
		
		\begin{lemma}\label{L9}
			Let $\eta_x$ be defined as
			$$\eta_x = \min\left\{ \frac{1}{4(\mu_x + L_x\sigma_x)}, \frac{\omega}{4L_{xy}} \right\},$$
			and let $\eta_y$ be defined as
			$$\eta_y = \min\left\{ \frac{1}{4(\mu_y + L_y\sigma_y)}, \frac{1}{4L_{xy}\omega} \right\},$$
			where $\omega > 0$ is a parameter. Let $\theta$ be defined as
			$$\theta = \theta(\omega, \sigma_x, \sigma_y) = 1 - \max\{\rho_a(\omega, \sigma_x, \sigma_y),\rho_b(\omega, \sigma_x, \sigma_y),\rho_c(\omega, \sigma_x, \sigma_y),\rho_d(\omega, \sigma_x, \sigma_y)\},$$
			where 
			
			$\rho_a(\omega, \sigma_x, \sigma_y) = \left[\max\left\{ \frac{4(\mu_x + L_x\sigma_x)}{\mu_x}, \frac{2}{\sigma_x}, \frac{4(\mu_y + L_y\sigma_y)}{\mu_{y}}, \frac{2}{\sigma_y}, \frac{4L_{xy}}{\mu_x\omega}, \frac{4L_{xy}\omega}{\mu_y} \right\} \right]^{-1},$
			
			$\rho_b(\omega, \sigma_x, \sigma_y) = \left[\max\left\{ \frac{4(\mu_x + L_x\sigma_x)}{\mu_x}, \frac{2}{\sigma_x}, \frac{8L_x(\mu_y + L_y\sigma_y)}{\mu_{xy}^2}, \frac{2}{\sigma_y}, \frac{2L_{xy}^2}{\mu_{xy}^2}, \frac{8L_x L_{xy}\omega}{\mu_{xy}^2},\frac{4L_{xy}}{\mu_x\omega} \right\} \right]^{-1},$
			
			$\rho_c(\omega, \sigma_x, \sigma_y) = \left[\max\left\{ \frac{4(\mu_y + L_y\sigma_y)}{\mu_y}, \frac{2}{\sigma_y}, \frac{8L_y(\mu_x + L_x\sigma_x)}{\mu_{yx}^2}, \frac{2}{\sigma_x}, \frac{2L_{xy}^2}{\mu_{yx}^2}, \frac{8L_y L_{xy}}{\mu_{yx}^2\omega},\frac{4L_{xy}\omega}{\mu_y} \right\} \right]^{-1},$
			
			$\rho_d(\omega, \sigma_x, \sigma_y) = \left[\max\left\{ \frac{8L_y(\mu_x + L_x\sigma_x)}{\mu_{yx}^2}, \frac{2}{\sigma_x}, \frac{8L_x(\mu_y + L_y\sigma_y)}{\mu_{xy}^2}, \frac{2}{\sigma_y}, \frac{8L_y L_{xy}}{\mu_{yx}^2\omega},\frac{8L_x L_{xy}\omega}{\mu_{xy}^2},\frac{2L_{xy}^2}{\mu_{yx}^2},\frac{2L_{xy}^2}{\mu_{xy}^2} \right\} \right]^{-1}.$
			
			Let $\Psi^k$ be the following Lyapunov function:
			
			\begin{equation}
				\begin{split}
					\Psi^k &= \frac{1}{\eta_x}\|x^k - x^*\|^2+\frac{1}{\eta_y}\|y^k - y^*\|^2+\frac{2}{\sigma_x}B_f(x_f^k, x^*)+\frac{2}{\sigma_y}B_g(y_f^k, y^*)
					\\&
					+\frac{1}{4\eta_y}\|y^k - y^{k-1}\|^2-2\langle y^k-y^{k-1}, A(x^k-x^*) \rangle.
				\end{split}
			\end{equation}
			Then, the following inequalities hold
			\begin{equation}
				\Psi^k \geq \frac{3}{4\eta_x}\|x^k - x^*\|^2 + \frac{1}{\eta_y}\|y^k - y^*\|^2,
			\end{equation}
			\begin{equation}
				\E\Psi^{k+1}\leq \theta\E\Psi^k + \frac{4}{1-\theta}\left(\delta_x+\delta_y\right) + \frac{1}{2}\left(\frac{1}{L_x} + \frac{\omega}{L_{xy}}\right)\sigma_f^2 + \frac{1}{2}\left(\frac{1}{L_y} + \frac{1}{L_{xy}\omega}\right)\sigma_g^2.
			\end{equation}
		\end{lemma}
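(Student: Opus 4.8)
\emph{Proof sketch (plan).} The statement splits into two essentially independent claims.

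For the lower bound on $\Psi^k$, note that $B_f(x_f^k,x^*)\ge 0$ and $B_g(y_f^k,y^*)\ge 0$ by convexity of $f$ and $g$, so the only possibly-negative term of $\Psi^k$ is the bilinear one. The plan is to control it by Young's inequality, $2|\langle y^k-y^{k-1},A(x^k-x^*)\rangle|\le\frac{1}{4\eta_y}\|y^k-y^{k-1}\|^2+4\eta_y\|A(x^k-x^*)\|^2$, and then use $\|A\cdot\|\le L_{xy}\|\cdot\|$ (Assumption~\ref{ass:5}) together with $\eta_y\le\frac{1}{4L_{xy}\omega}$, $\eta_x\le\frac{\omega}{4L_{xy}}$, which give $4\eta_y L_{xy}^2\le\frac{L_{xy}}{\omega}\le\frac{1}{4\eta_x}$. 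Thus the $\frac{1}{4\eta_y}\|y^k-y^{k-1}\|^2$ term and one quarter of the $\frac{1}{\eta_x}\|x^k-x^*\|^2$ term of $\Psi^k$ absorb the cross term, leaving $\Psi^k\ge\frac{3}{4\eta_x}\|x^k-x^*\|^2+\frac{1}{\eta_y}\|y^k-y^*\|^2$.

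For the recursion, working conditionally on the history $\xi^{k-1}$ and taking total expectation at the end, I would add inequality~\eqref{L7} (a bound on $\frac{1}{\eta_x}\E\|x^{k+1}-x^*\|^2$) to the conclusion of Lemma~\ref{L8} (a bound on $\frac{1}{\eta_y}\E\|y^{k+1}-y^*\|^2$); the Bregman terms $\pm B_f(x_g^k,x^*)$ and $\pm B_g(y_g^k,y^*)$ produced by the two lemmas cancel. Substituting the resulting bound on $\frac{1}{\eta_x}\E\|x^{k+1}-x^*\|^2+\frac{1}{\eta_y}\E\|y^{k+1}-y^*\|^2$ into the expression for $\E\Psi^{k+1}$, the $\frac{2}{\sigma_x}\E B_f(x_f^{k+1},x^*)$ and $\frac{2}{\sigma_y}\E B_g(y_f^{k+1},y^*)$ terms cancel against the negative ones in the lemmas, and the two bilinear inner products merge, via $\langle A^\top u,v\rangle=\langle u,Av\rangle$, into $2\langle A(x^{k+1}-x^*),\,y^{k+1}-y_m^k\rangle$. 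Writing $y^{k+1}-y_m^k=(y^{k+1}-y^k)-\theta(y^k-y^{k-1})$, the $(y^{k+1}-y^k)$ part cancels the $-2\langle y^{k+1}-y^k,A(x^{k+1}-x^*)\rangle$ term of $\Psi^{k+1}$; splitting $x^{k+1}-x^*=(x^k-x^*)+(x^{k+1}-x^k)$ in the $-\theta(y^k-y^{k-1})$ part produces exactly $\theta$ times the bilinear term of $\Psi^k$ plus a residual $-2\theta\langle A(x^{k+1}-x^k),y^k-y^{k-1}\rangle$, which I would bound by Young's inequality with weight $4\eta_y$, yielding exactly the $\frac{\theta}{4\eta_y}\|y^k-y^{k-1}\|^2$ term of $\theta\Psi^k$ and a leftover $4\theta\eta_y L_{xy}^2\|x^{k+1}-x^k\|^2$. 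At this point the coefficient of $\E\|x^{k+1}-x^k\|^2$ is $\mu_x+L_x\sigma_x-\frac{1}{2\eta_x}+4\theta\eta_y L_{xy}^2$ and that of $\E\|y^{k+1}-y^k\|^2$ is $\mu_y+L_y\sigma_y-\frac{1}{4\eta_y}$, both nonpositive by the step-size definitions (the $x$-coefficient again uses $4\eta_y L_{xy}^2\le\frac{1}{4\eta_x}$ and $\mu_x+L_x\sigma_x\le\frac{1}{4\eta_x}$), so they are discarded. The surviving squared-distance and Bregman terms reduce to $\theta\Psi^k$ provided $1-\theta\le\eta_x(\mu_x+\beta_x\mu_{yx}^2)$, $1-\theta\le\eta_y(\mu_y+\beta_y\mu_{xy}^2)$, $1-\theta\le\sigma_x/2$, $1-\theta\le\sigma_y/2$; since $1-\theta=\max\{\rho_a,\rho_b,\rho_c,\rho_d\}$ it suffices to check each $\rho_\bullet$ is below each of these, which one does by substituting the $\min$-expressions for $\eta_x,\eta_y,\beta_x,\beta_y$ and observing that the reciprocals of $\eta_x\mu_x$, $\eta_y\mu_y$, $\eta_x\beta_x\mu_{yx}^2$, $\eta_y\beta_y\mu_{xy}^2$, $\sigma_x/2$, $\sigma_y/2$ all occur among the entries of the relevant $\rho_\bullet^{-1}$ (any $\rho_\bullet$ whose positivity requirement fails equals $0$ and drops out of the max). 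Finally, the $\delta$-terms total $(\frac{4}{\sigma_x}+2)\delta_x(\xi^{k-1})+(\frac{4}{\sigma_y}+2)\delta_y(\xi^{k-1})$, bounded after expectation by $\frac{4}{1-\theta}(\delta_x+\delta_y)$ via $\E\delta_x(\xi^{k-1})\le\delta_x$, $\E\delta_y(\xi^{k-1})\le\delta_y$ (Assumption~\ref{ass:6}) and $1-\theta\le\sigma_x/2\le1$; and the variance terms $2\eta_x\sigma_f^2+\beta_x\sigma_g^2+\beta_y\sigma_f^2+2\eta_y\sigma_g^2$ are bounded, using $\eta_x\le\frac{\omega}{4L_{xy}}$, $\beta_y\le\frac{1}{2L_x}$, $\beta_x\le\frac{1}{2L_y}$, $\eta_y\le\frac{1}{4L_{xy}\omega}$, by $\frac{1}{2}(\frac{1}{L_x}+\frac{\omega}{L_{xy}})\sigma_f^2+\frac{1}{2}(\frac{1}{L_y}+\frac{1}{L_{xy}\omega})\sigma_g^2$, which gives the claimed recursion.

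The main obstacle is twofold. Conceptually, it is the bilinear bookkeeping above: correctly tracking which $\langle A\cdot,\cdot\rangle$ pieces telescope into $\theta\Psi^k$ (the negative-momentum iterate $y_m^k$ is precisely engineered for this) and which survive as $\|x^{k+1}-x^k\|^2$-type residuals that must be killed inside the step-size budget. Technically, it is the exhaustive verification that the four definitions of $\theta$ through $\rho_a,\dots,\rho_d$ make $1-\theta\le\eta_x(\mu_x+\beta_x\mu_{yx}^2)$ and its three companions hold simultaneously across the strongly-convex--strongly-concave, strongly-convex--concave, convex--strongly-concave, and convex--concave regimes; everything else is a routine application of Young's inequality and the parameter definitions.
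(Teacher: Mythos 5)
Your proposal is correct and follows essentially the same route as the paper: both halves are proved by summing the conclusions of the two preceding one‑step lemmas, cancelling the Bregman terms, merging the bilinear products through the momentum point $y_m^k$, absorbing the residual $\langle A(x^{k+1}-x^k),y^k-y^{k-1}\rangle$ and the $\|x^{k+1}-x^k\|^2$, $\|y^{k+1}-y^k\|^2$ terms via Young's inequality and the step-size definitions, and checking the coefficient conditions against the entries of $\rho_a,\dots,\rho_d$; the lower bound on $\Psi^k$ is the same Cauchy--Schwarz/Young argument with $4\eta_yL_{xy}^2\le\tfrac{1}{4\eta_x}$. The only cosmetic difference is the order in which Young's inequality and the bound $L_{xy}\le\tfrac{1}{4\sqrt{\eta_x\eta_y}}$ are applied.
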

		
		\begin{proof}
			The proof of this lemma is similar to the proof of Lemma B.4. in \cite{minimax2112}.
			
			After adding up \eqref{L9} and \eqref{L8} we get
			\begin{align*}
				\mathrm{(LHS)}
				&\leq
				\left(\frac{1}{\eta_x} - \mu_x - \beta_x\mu_{yx}^2\right)\sqn{x^{k} - x^*}
				+
				\left(\frac{1}{\eta_y} - \mu_y - \beta_y\mu_{xy}^2\right)\sqn{y^{k} - y^*}
				\\&+
				\left(\mu_x + L_x\sigma_x -\frac{1}{2\eta_x}\right)
				\E_{\xi_x^k, \xi_y^k}\sqn{x^{k+1} - x^k}
				\\&+
				\left(\mu_y + L_y\sigma_y -\frac{1}{2\eta_y}\right)
				\E_{\xi_x^k, \xi_y^k}\sqn{y^{k+1} - y^k}
				+		
				\left(\frac{2}{\sigma_x} - 1\right)\bg_f(x_f^k,x^*) 
				\\&+
				\left(\frac{2}{\sigma_y} - 1\right)\bg_g(y_f^k,y^*) 
				+
				2\E_{\xi_x^k, \xi_y^k}\<y^{k+1} - y_m^k,\mA(x^{k+1} - x^*)>
				\\&+
				\left( 2 + \frac{4}{\sigma_x}\right)\delta_x(\xi^{k-1})+\left(2 + \frac{4}{\sigma_y}\right)\delta_y(\xi^{k-1}) + (\beta_y + 2\eta_x)\sigma_f^2 + (\beta_x + 2\eta_y)\sigma_g^2,
			\end{align*}
			where $\mathrm{(LHS)}$ is given as
			\begin{align*}
				\mathrm{(LHS)}&= \frac{1}{\eta_x}\E_{\xi_x^k, \xi_y^k}\sqn{x^{k+1} - x^*} + \frac{1}{\eta_y}\E_{\xi_x^k, \xi_y^k}\sqn{y^{k+1} - y^*} 
				\\&+
				\frac{2}{\sigma_x}\E_{\xi_x^k, \xi_y^k}\bg_f(x_f^{k+1},x^*) +
				\frac{2}{\sigma_y}\E_{\xi_x^k, \xi_y^k}\bg_g(y_f^{k+1},y^*).
			\end{align*}
			Using Line~\ref{ISAPDG:line:2} of the Algorithm~\ref{alg:ISAPDG} we get
			\begin{align*}
				\mathrm{(LHS)}
				&\leq
				\left(\frac{1}{\eta_x} - \mu_x - \beta_x\mu_{yx}^2\right)\sqn{x^{k} - x^*}
				+
				\left(\frac{1}{\eta_y} - \mu_y - \beta_y\mu_{xy}^2\right)\sqn{y^{k} - y^*}
				\\&+
				\left(\mu_x + L_x\sigma_x -\frac{1}{2\eta_x}\right)
				\E_{\xi_x^k, \xi_y^k}\sqn{x^{k+1} - x^k}
				\\&+
				\left(\mu_y + L_y\sigma_y -\frac{1}{2\eta_y}\right)
				\E_{\xi_x^k, \xi_y^k}\sqn{y^{k+1} - y^k}
				\\&+		
				\left(\frac{2}{\sigma_x} - 1\right)\bg_f(x_f^k,x^*) 
				+
				\left(\frac{2}{\sigma_y} - 1\right)\bg_g(y_f^k,y^*) 
				\\&+
				2\E_{\xi_x^k, \xi_y^k}\<y^{k+1} -y^k,\mA(x^{k+1} - x^*)>
				-
				2\theta\<y^k -y^{k-1},\mA(x^{k+1} - x^*)>
				\\&+ \left( 2 + \frac{4}{\sigma_x}\right)\delta_x(\xi^{k-1})+\left(2 + \frac{4}{\sigma_y}\right)\delta_y(\xi^{k-1}) + (\beta_y + 2\eta_x)\sigma_f^2 + (\beta_x + 2\eta_y)\sigma_g^2.
			\end{align*}
			
			Using Assumption~\ref{ass:5} we get
			\begin{align*}
				\mathrm{(LHS)}
				&\leq
				\left(\frac{1}{\eta_x} - \mu_x - \beta_x\mu_{yx}^2\right)\sqn{x^{k} - x^*}
				+
				\left(\frac{1}{\eta_y} - \mu_y - \beta_y\mu_{xy}^2\right)\sqn{y^{k} - y^*}
				\\&+
				\left(\mu_x + L_x\sigma_x -\frac{1}{2\eta_x}\right)
				\E_{\xi_x^k, \xi_y^k}\sqn{x^{k+1} - x^k}
				\\&+
				\left(\mu_y + L_y\sigma_y -\frac{1}{2\eta_y}\right)
				\E_{\xi_x^k, \xi_y^k}\sqn{y^{k+1} - y^k}
				\\&+		
				\left(\frac{2}{\sigma_x} - 1\right)\bg_f(x_f^k,x^*) 
				+
				\left(\frac{2}{\sigma_y} - 1\right)\bg_g(y_f^k,y^*) 
				\\&+
				2\E_{\xi_x^k, \xi_y^k}\<y^{k+1} -y^k,\mA(x^{k+1} - x^*)>
				-
				2\theta\<y^k -y^{k-1},\mA(x^{k} - x^*)>
				\\&+
				2\theta L_{xy}\E_{\xi_x^k, \xi_y^k}\norm{y^k - y^{k-1}}\norm{x^{k+1} - x^k}
				\\&+ \left( 2 + \frac{4}{\sigma_x}\right)\delta_x(\xi^{k-1})+\left(2 + \frac{4}{\sigma_y}\right)\delta_y(\xi^{k-1}) + (\beta_y + 2\eta_x)\sigma_f^2 + (\beta_x + 2\eta_y)\sigma_g^2.
			\end{align*}
			Using the definition of $\eta_x$ and $\eta_y$ and the definition of $\theta< 1$ we get
			\begin{align*}
				\mathrm{(LHS)}
				&\leq
				\left(\frac{1}{\eta_x} - \mu_x - \beta_x\mu_{yx}^2\right)\sqn{x^{k} - x^*}
				+
				\left(\frac{1}{\eta_y} - \mu_y - \beta_y\mu_{xy}^2\right)\sqn{y^{k} - y^*}
				\\&-
				\frac{1}{4\eta_x}
				\E_{\xi_x^k, \xi_y^k}\sqn{x^{k+1} - x^k}
				-
				\frac{1}{4\eta_y}
				\E_{\xi_x^k, \xi_y^k}\sqn{y^{k+1} - y^k}
				\\&+		
				\left(\frac{2}{\sigma_x} - 1\right)\bg_f(x_f^k,x^*) 
				+
				\left(\frac{2}{\sigma_y} - 1\right)\bg_g(y_f^k,y^*) 
				\\&+
				2\E_{\xi_x^k, \xi_y^k}\<y^{k+1} -y^k,\mA(x^{k+1} - x^*)>
				-
				2\theta\<y^k -y^{k-1},\mA(x^{k} - x^*)>
				\\&+
				\frac{\theta }{2\sqrt{\eta_x\eta_y}}\E_{\xi_x^k, \xi_y^k}\norm{y^k - y^{k-1}}\norm{x^{k+1} - x^k}
				\\&+ \frac{4}{1-\theta}\left(\delta_x(\xi^{k-1})+\delta_y(\xi^{k-1})\right) + \left(\beta_y + \frac{\omega}{2L_{xy}}\right)\sigma_f^2 + \left(\beta_x + \frac{1}{2L_{xy}\omega}\right)\sigma_g^2.
			\end{align*}
			Transforming this inequality we get
			\begin{align*}
				\mathrm{(LHS)}
				&\leq
				\left(\frac{1}{\eta_x} - \mu_x - \beta_x\mu_{yx}^2\right)\sqn{x^{k} - x^*}
				+
				\left(\frac{1}{\eta_y} - \mu_y - \beta_y\mu_{xy}^2\right)\sqn{y^{k} - y^*}
				\\&-
				\frac{1}{4\eta_x}
				\E_{\xi_x^k, \xi_y^k}\sqn{x^{k+1} - x^k}
				-
				\frac{1}{4\eta_y}
				\E_{\xi_x^k, \xi_y^k}\sqn{y^{k+1} - y^k}
				\\&+		
				\left(\frac{2}{\sigma_x} - 1\right)\bg_f(x_f^k,x^*) 
				+
				\left(\frac{2}{\sigma_y} - 1\right)\bg_g(y_f^k,y^*) 
				\\&+
				2\E_{\xi_x^k, \xi_y^k}\<y^{k+1} -y^k,\mA(x^{k+1} - x^*)>
				-
				2\theta\<y^k -y^{k-1},\mA(x^{k} - x^*)>
				\\&+
				\frac{\theta}{4\eta_x}\E_{\xi_x^k, \xi_y^k}\sqn{x^{k+1} - x^k}
				+
				\frac{\theta}{4\eta_y}\sqn{y^k - y^{k-1}}
				\\&+ \frac{4}{1-\theta}\left(\delta_x(\xi^{k-1})+\delta_y(\xi^{k-1})\right) + \left(\beta_y + \frac{\omega}{2L_{xy}}\right)\sigma_f^2 + \left(\beta_x + \frac{1}{2L_{xy}\omega}\right)\sigma_g^2.
			\end{align*}
			Transforming further
			\begin{align*}
				\mathrm{(LHS)}
				&\leq
				\left(\frac{1}{\eta_x} - \mu_x - \beta_x\mu_{yx}^2\right)\sqn{x^{k} - x^*}
				+
				\left(\frac{1}{\eta_y} - \mu_y - \beta_y\mu_{xy}^2\right)\sqn{y^{k} - y^*}
				\\&+
				\frac{\theta}{4\eta_y}\sqn{y^k - y^{k-1}}
				-
				\frac{1}{4\eta_y}
				\E_{\xi_x^k, \xi_y^k}\sqn{y^{k+1} - y^k}
				\\&+		
				\left(\frac{2}{\sigma_x} - 1\right)\bg_f(x_f^k,x^*) 
				+
				\left(\frac{2}{\sigma_y} - 1\right)\bg_g(y_f^k,y^*) 
				\\&+
				2\E_{\xi_x^k, \xi_y^k}\<y^{k+1} -y^k,\mA(x^{k+1} - x^*)>
				-
				2\theta\<y^k -y^{k-1},\mA(x^{k} - x^*)>
				\\&+ \frac{4}{1-\theta}\left(\delta_x(\xi^{k-1})+\delta_y(\xi^{k-1})\right) + \left(\beta_y + \frac{\omega}{2L_{xy}}\right)\sigma_f^2 + \left(\beta_x + \frac{1}{2L_{xy}\omega}\right)\sigma_g^2.
			\end{align*}
			
			Using the definition of $\beta_x$ and $\beta_y$ we get
			\begin{align*}
				\mathrm{(LHS)}
				&\leq
				\left(1 - \eta_x\mu_x - \min\left\{\frac{\eta_x\mu_{yx}^2}{2L_y},\frac{\mu_{yx}^2}{2L_{xy}^2}\right\}\right)\frac{1}{\eta_x}\sqn{x^{k} - x^*}
				\\&+
				\left(1 - \eta_y\mu_y - \min\left\{\frac{\eta_y\mu_{xy}^2}{2L_x},\frac{\mu_{xy}^2}{2L_{xy}^2}\right\}\right)\frac{1}{\eta_y}\sqn{y^{k} - y^*}
				\\&+
				\frac{\theta}{4\eta_y}\sqn{y^k - y^{k-1}}
				-
				\frac{1}{4\eta_y}
				\E_{\xi_x^k, \xi_y^k}\sqn{y^{k+1} - y^k}
				\\&+		
				\left(\frac{2}{\sigma_x} - 1\right)\bg_f(x_f^k,x^*) 
				+
				\left(\frac{2}{\sigma_y} - 1\right)\bg_g(y_f^k,y^*) 
				\\&+
				2\E_{\xi_x^k, \xi_y^k}\<y^{k+1} -y^k,\mA(x^{k+1} - x^*)>
				-
				2\theta\<y^k -y^{k-1},\mA(x^{k} - x^*)>
				\\&+ \frac{4}{1-\theta}\left(\delta_x(\xi^{k-1})+\delta_y(\xi^{k-1})\right) + \frac{1}{2}\left(\frac{1}{L_x} + \frac{\omega}{L_{xy}}\right)\sigma_f^2 + \frac{1}{2}\left(\frac{1}{L_y} + \frac{1}{L_{xy}\omega}\right)\sigma_g^2
			\end{align*}
			Transforming this inequality
			\begin{align*}
				\mathrm{(LHS)}
				&\leq
				\left(1 - \max\left\{\eta_x\mu_x, \min\left\{\frac{\eta_x\mu_{yx}^2}{2L_y},\frac{\mu_{yx}^2}{2L_{xy}^2}\right\}\right\}\right)\frac{1}{\eta_x}\sqn{x^{k} - x^*}
				\\&+
				\left(1 - \max\left\{\eta_y\mu_y, \min\left\{\frac{\eta_y\mu_{xy}^2}{2L_x},\frac{\mu_{xy}^2}{2L_{xy}^2}\right\}\right\}\right)\frac{1}{\eta_y}\sqn{y^{k} - y^*}
				\\&+
				\frac{\theta}{4\eta_y}\sqn{y^k - y^{k-1}}
				-
				\frac{1}{4\eta_y}
				\E_{\xi_x^k, \xi_y^k}\sqn{y^{k+1} - y^k}
				\\&+		
				\left(\frac{2}{\sigma_x} - 1\right)\bg_f(x_f^k,x^*) 
				+
				\left(\frac{2}{\sigma_y} - 1\right)\bg_g(y_f^k,y^*) 
				\\&+
				2\E_{\xi_x^k, \xi_y^k}\<y^{k+1} -y^k,\mA(x^{k+1} - x^*)>
				-
				2\theta\<y^k -y^{k-1},\mA(x^{k} - x^*)>
				\\&+ \frac{4}{1-\theta}\left(\delta_x(\xi^{k-1})+\delta_y(\xi^{k-1})\right) + \frac{1}{2}\left(\frac{1}{L_x} + \frac{\omega}{L_{xy}}\right)\sigma_f^2 + \frac{1}{2}\left(\frac{1}{L_y} + \frac{1}{L_{xy}\omega}\right)\sigma_g^2.
			\end{align*}
			Using the definition of $\theta$ we get
			\begin{align*}
				\mathrm{(LHS)}
				&\leq
				\theta\left(
				\frac{1}{\eta_x}\sqn{x^{k} - x^*}
				+
				\frac{1}{\eta_y}\sqn{y^{k} - y^*}
				+
				\frac{1}{4\eta_y}\sqn{y^k - y^{k-1}}
				\right)
				\\&+
				\theta\left(
				-2\<y^k -y^{k-1},\mA(x^{k} - x^*)>
				+
				\frac{2}{\sigma_x}\bg_f(x_f^k,x^*) 
				+
				\frac{2}{\sigma_y}\bg_g(y_f^k,y^*) 
				\right)	
				\\&-
				\frac{1}{4\eta_y}
				\E_{\xi_x^k, \xi_y^k}\sqn{y^{k+1} - y^k}
				+
				2\E_{\xi_x^k, \xi_y^k}\<y^{k+1} -y^k,\mA(x^{k+1} - x^*)>
				\\&+ \frac{4}{1-\theta}\left(\delta_x(\xi^{k-1})+\delta_y(\xi^{k-1})\right) + \frac{1}{2}\left(\frac{1}{L_x} + \frac{\omega}{L_{xy}}\right)\sigma_f^2 + \frac{1}{2}\left(\frac{1}{L_y} + \frac{1}{L_{xy}\omega}\right)\sigma_g^2.
			\end{align*}

			After taking the expectation over all random variables, rearranging and using the definition of $\Psi^k$, using the fact that $\E\delta_x(\xi^{k-1}) \leq \delta_x, \E\delta_y(\xi^{k-1}) \leq \delta_y$ we get
			\begin{equation*}
				\E\Psi^{k+1}\leq \theta\E\Psi^k + \frac{4}{1-\theta}\left(\delta_x+\delta_y\right) + \frac{1}{2}\left(\frac{1}{L_x} + \frac{\omega}{L_{xy}}\right)\sigma_f^2 + \frac{1}{2}\left(\frac{1}{L_y} + \frac{1}{L_{xy}\omega}\right)\sigma_g^2.
			\end{equation*}
			Finally, using the definition of $\Psi^k$, $\eta_x$ and $\eta_y$ we get
			\begin{align*}
				\Psi^k
				&\geq
				\frac{1}{\eta_x}\sqn{x^{k} - x^*}
				+
				\frac{1}{\eta_y}\sqn{y^{k} - y^*}
				+
				\frac{1}{4\eta_y}\sqn{y^k - y^{k-1}}
				-
				2\<y^k -y^{k-1},\mA(x^{k} - x^*)>
				\\&\geq
				\frac{1}{\eta_x}\sqn{x^{k} - x^*}
				+
				\frac{1}{\eta_y}\sqn{y^{k} - y^*}
				+
				\frac{1}{4\eta_y}\sqn{y^k - y^{k-1}}
				-
				2L_{xy}\norm{y^k -y^{k-1}}\norm{x^{k} - x^*}
				\\&\geq
				\frac{1}{\eta_x}\sqn{x^{k} - x^*}
				+
				\frac{1}{\eta_y}\sqn{y^{k} - y^*}
				+
				\frac{1}{4\eta_y}\sqn{y^k - y^{k-1}}
				-
				\frac{1}{2\sqrt{\eta_x\eta_y}}\norm{y^k -y^{k-1}}\norm{x^{k} - x^*}
				\\&\geq
				\frac{3}{4\eta_x}\sqn{x^{k} - x^*}
				+
				\frac{1}{\eta_y}\sqn{y^{k} - y^*}.
			\end{align*}
		\end{proof}
		
		Back to proof of the Theorem \ref{th4}.
		
		Let $\Sigma^2 \triangleq \left( \frac{1}{L_x}+\frac{\omega}{L_{xy}} \right)\sigma_f^2+\left( \frac{1}{L_y}+\frac{1}{L_{xy}\omega} \right)\sigma_g^2$. Then
		
		\begin{align*}
			\E\Psi^k &\leq \theta^k \Psi^0 + \left(\frac{4}{1-\theta}(\delta_x+\delta_y)+\frac{\Sigma^2}{2}\right)(1 + \theta + \theta^2 + \dots)
			\\&\leq \theta^k \Psi^0 +  \frac{4}{(1-\theta)^2}(\delta_x+\delta_y)+\frac{\Sigma^2}{2(1-\theta)},
		\end{align*}
		
		$$\theta^k \Psi^0 +  \frac{4}{(1-\theta)^2}(\delta_x+\delta_y)+\frac{\Sigma^2}{2(1-\theta)}
		\geq
		\E \Psi^k
		\geq \frac{3}{4\eta_x}\E\|x^k - x^*\|^2 + \frac{1}{\eta_y}\E\|y^k - y^*\|^2.$$
		
		Using the definitions of $\eta_x$ and $\eta_y$, we get
		
		\begin{equation}\label{syn_th4_x}
			\mathbb{E}\|x^k - x^*\|^2 \leq \frac{\omega}{3L_{xy}}\left( \theta^k \Psi^0 +  \frac{4}{(1-\theta)^2}(\delta_x+\delta_y)+\frac{\Sigma^2}{2(1-\theta)}\right),
		\end{equation}
		\begin{equation}\label{syn_th4_y}
			\mathbb{E}\|y^k - y^*\|^2 \leq \frac1{4L_{xy}\omega}\left( \theta^k \Psi^0 +  \frac{4}{(1-\theta)^2}(\delta_x+\delta_y)+\frac{\Sigma^2}{2(1-\theta)}\right).
		\end{equation}
		
		Also for such definitions we know from \cite{minimax2112}
		{\small
		\begin{align*}\label{ests}
			\frac{1}{\rho_a} &\leq
			4 + 4\max\left\{\sqrt{\frac{L_x}{\mu_x}}, \sqrt{\frac{L_y}{\mu_y}},\frac{L_{xy}}{\sqrt{\mu_x\mu_y}}\right\} \\
			&\quad\text{ for } \omega = \sqrt{\frac{\mu_y}{\mu_x}}, \sigma_x = \sqrt{\frac{\mu_x}{2L_x}},\sigma_y = \sqrt{\frac{\mu_y}{2L_y}},\\
			\frac{1}{\rho_b} & \leq
			4+8\max\left\{
			\frac{\sqrt{L_xL_y}}{\mu_{xy}},
			\frac{L_{xy}}{\mu_{xy}}\sqrt{\frac{L_x}{\mu_x}},
			\frac{L_{xy}^2}{\mu_{xy}^2}
			\right\} \\
			&\quad\text{ for } \omega = \sqrt{\frac{\mu_{xy}^2}{2\mu_xL_x}}, \sigma_x = \sqrt{\frac{\mu_x}{2L_x}},\sigma_y =\min\left\{1,\sqrt{\frac{\mu_{xy}^2}{4L_xL_y}}\right\},\\
			\frac{1}{\rho_c} & \leq
			4+8\max\left\{
			\frac{\sqrt{L_xL_y}}{\mu_{yx}},
			\frac{L_{xy}}{\mu_{yx}}\sqrt{\frac{L_y}{\mu_y}},
			\frac{L_{xy}^2}{\mu_{yx}^2}
			\right\} \\
			&\quad\text{ for } \omega = \sqrt{\frac{2\mu_yL_y}{\mu_{yx}^2}},\sigma_x =\min\left\{1,\sqrt{\frac{\mu_{yx}^2}{4L_xL_y}}\right\},\sigma_y = \sqrt{\frac{\mu_y}{2L_y}},\\
			\frac{1}{\rho_d} &\leq 2+8\max\left\{
			\frac{\sqrt{L_xL_y}L_{xy}}{\mu_{xy}\mu_{yx}},
			\frac{L_{xy}^2}{\mu_{yx}^2},
			\frac{L_{xy}^2}{\mu_{xy}^2}
			\right\} \\
			&\quad\text{ for } \omega = \frac{\mu_{xy}}{\mu_{yx}}\sqrt{\frac{L_y}{L_x}}, \sigma_x = \min\left\{1,\sqrt{\frac{\mu_{yx}^2}{4L_xL_y}}\right\},\sigma_y =\min\left\{1,\sqrt{\frac{\mu_{xy}^2}{4L_xL_y}}\right\}, \\
            \frac{1}{1-\theta} &= \min \{ \rho_a^{-1},  \rho_b^{-1},  \rho_c^{-1},  \rho_d^{-1}\}.
		\end{align*}
        }
		
		
		Note, that  adding up batches and choosing $\omega = \sqrt{\frac{\mu_y}{\mu_x}}, \sigma_x = \sqrt{\frac{\mu_x}{2L_x}},\sigma_y = \sqrt{\frac{\mu_x}{2L_x}}$ proves the Theorem \ref{th4}.
		
		Rewriting inequalities in batch setting and assuming $\delta_x=\delta_y=0$ we get
		
		$$\mathbb{E}\|x^k - x^*\|^2 \leq \frac{\omega}{3L_{xy}}\left( \theta^k \Psi^0+\frac{1}{2(1-\theta)}\left(\left( \frac{1}{L_x}+\frac{\omega}{L_{xy}}\right)\frac{\sigma_f^2}{r_f}+\left( \frac{1}{L_y}+\frac{1}{L_{xy}\omega} \right)\frac{\sigma_g^2}{r_g}\right) \right),$$
		$$\mathbb{E}\|y^k - y^*\|^2 \leq \frac{1}{4L_{xy}\omega}\left( \theta^k \Psi^0+\frac{1}{2(1-\theta)}\left(\left( \frac{1}{L_x}+\frac{\omega}{L_{xy}}\right)\frac{\sigma_f^2}{r_f}+\left( \frac{1}{L_y}+\frac{1}{L_{xy}\omega}\right)\frac{\sigma_g^2}{r_g}\right) \right).$$
		
		Therefore, we can estimate the number of algorithm iterations $N = \mathcal{O} \left(\frac{1}{1-\theta}\log{\frac{C}{\epsilon}}\right)$, where C is polynomial and not depend on $\epsilon$. Rewriting it we obtain $N = \mathcal{O} \left( \min \{ \rho_a^{-1},  \rho_b^{-1},  \rho_c^{-1},  \rho_d^{-1}\}\log{\frac{C}{\epsilon}}\right).$
		
		It is sufficient to take such batch sizes $r_f = \left\lceil\frac{\max\{\omega, \omega^{-1}\}}{2L_{xy}(1-\theta)\epsilon}\left(\frac1{L_x}+\frac{\omega}{L_{xy}}\right)\sigma_f^2\right\rceil$, $r_g = \left\lceil\frac{\max\{\omega, \omega^{-1}\}}{2L_{xy}(1-\theta)\epsilon}\left(\frac1{L_y}+\frac{1}{L_{xy}\omega}\right)\sigma_g^2\right\rceil.$
		
		Rewriting it with the selected constants
		$$r_f = \left\lceil \max\left\{\sqrt{\frac{L_x}{\mu_x}}, \sqrt{\frac{L_y}{\mu_y}},\frac{L_{xy}}{\sqrt{\mu_x\mu_y}}\right\} \frac{\mu}{2L_{xy}\sqrt{\mu_x\mu_y}\epsilon}\left(\frac{1}{L_x} + \frac{1}{L_{xy}}\sqrt{\frac{\mu_y}{\mu_x}}\right)\sigma_f^2 \right\rceil,$$
		$$r_g = \left\lceil\max\left\{\sqrt{\frac{L_x}{\mu_x}}, \sqrt{\frac{L_y}{\mu_y}},\frac{L_{xy}}{\sqrt{\mu_x\mu_y}}\right\} \frac{\mu}{2L_{xy}\sqrt{\mu_x\mu_y}\epsilon}\left(\frac{1}{L_y} + \frac{1}{L_{xy}}\sqrt{\frac{\mu_x}{\mu_y}}\right)\sigma_g^2\right\rceil,$$
		
		where $\mu = \max\{\mu_x, \mu_y\}$.
		
		\section{Decentralized setting}\label{app:consensus_iterations}
		
		Let us get an Algorithm \ref{alg:AV} from the Algorithm \ref{alg:DAPDG}, by multiplying every line by $\frac{\textbf{1}}{n}$, where \textbf{1} is a column of $1$.
	
	Algorithm \ref{alg:AV} shows what happens to the average values at the nodes.
	
	\begin{algorithm}[H]\caption{Average values}\label{alg:AV}
		\begin{algorithmic}
			\State \textbf{Input:} {$\eta_x, \eta_y, \alpha_x, \alpha_y, \beta_x, \beta_y > 0$, $\tau_x, \tau_y, \sigma_x, \sigma_y\in \left(0, 1\right]$, $\theta\in \left(0, 1\right)$
				
				\State $\overline{x}^0_f = \overline{x}^0$, $\overline{x}^0 \in \range \mA^\top$
				
				\State $\overline{y}^0_f = \overline{y}^{-1} = \overline{y}^0$, $\overline{y}^0 \in \range \mA$}
			
			\For{$k=0,1,2,\dots$}
			\State $\overline{y}_m^k = \overline{y}^k + \theta\left(\overline{y}^k-\overline{y}^{k-1}\right)$\label{AV:1}
			
			\State $\overline{x}_g^k = \tau_x \overline{x}^k + \left(1 - \tau_x\right)\overline{x}_f^k$\label{AV:2}
			
			\State $\overline{y}_g^k = \tau_y \overline{y}^k + \left(1 - \tau_y\right)\overline{y}_f^k$\label{AV:3}
			
			\State $\overline{x}^{k+1}=\overline{x}^k+\eta_x\alpha_x\left(\overline{x}_g^k - \overline{x}^k\right) - \eta_x\beta_x A^T\left(A \overline{x}^k-\nabla g_{\delta}(\overline{y}_g^k, \xi_y^k)\right) - \eta_x\left(\nabla f_{\delta}(\overline{x}_g^k, \xi_x^k) + A^T \overline{y}_m^k\right)$\label{AV:4}
			
			\State $\overline{y}^{k+1} = \overline{y}^k + \eta_y\alpha_y\left(\overline{y}_g^k - \overline{y}^k\right) - \eta_y\beta_y A\left(A^T \overline{y}^k+ f_{\delta}(\overline{x}_g^k, \xi_x^k)\right) - \eta_y\left(\nabla g_{\delta}(\overline{y}_g^k, \xi_y^k) - A \overline{x}^{k+1}\right)$\label{AV:5}
			
			\State $\overline{x}_f^{k+1} = \overline{x}_g^k + \sigma_x\left(\overline{x}^{k+1} - \overline{x}^k\right)$\label{AV:6}
			
			\State $\overline{y}_f^{k+1} = \overline{y}_g^k + \sigma_y\left(\overline{y}^{k+1} - \overline{y}^k\right)$\label{AV:7}
			
			\EndFor
		\end{algorithmic}
	\end{algorithm}
	
	Supporting values $X$ and $Y$ to be in the neighborhood of $\mathcal{C}(d_x)$ and $\mathcal{C}(d_y)$ and using Lemma~\ref{lemma:3}, conditions of  Theorem~\ref{th4} hold.
	
	Using Assumption~\ref{ass:3}, Assumption~\ref{ass:4}, we get
	\begin{equation}\label{c:10}
		\begin{split}
			\mathbb{E}_{\xi_{x, k}}\|f_{\delta}(\overline{x}_g^k, \xi_{x, k}) -  \nabla f_{\delta}(\overline{x}_g^k)\|^2 \leq \frac{\sum_{i=1}^n \sigma_{f, i}^2/r_{f, i}}{n^2} \triangleq \frac{\sigma_{F, r}^2}{n},
			\\
			\mathbb{E}_{\xi_{y, k}}\|g_{\delta}(\overline{y}_g^k, \xi_{y, k}) -  \nabla g_{\delta}(\overline{y}_g^k)\|^2 \leq \frac{\sum_{i=1}^n \sigma_{g, i}^2/r_{g, i}}{n^2} \triangleq \frac{\sigma_{G,r}^2}{n}.
		\end{split}
	\end{equation}
		
		Let us support the number of iterations of Consensus to be sufficiently big to guarantee $\mathbb{E}\|X^k - \overline{X^k}\| \leq \sqrt{\delta'}$ and $\mathbb{E}\|Y^k - \overline{Y^k}\| \leq \sqrt{\delta'}.$
		
		Introducing some definitions, which correspond to Lemma \ref{lemma:3}
		$$\delta_x = \frac{1}{2n}\left( \frac{L_{lx}^2}{L_{x}} + \frac{2L_{lx}^2}{\mu_{x}} + L_{lx} - \mu_{lx} \right)\delta',$$
		$$\delta_y = \frac{1}{2n}\left( \frac{L_{ly}^2}{L_{y}} + \frac{2L_{ly}^2}{\mu_{y}} + L_{ly} - \mu_{ly} \right)\delta',$$
		
		$\hat{L_x} = 2L_{x}$, $\hat{L_y} = 2L_{y}$, $\hat{\mu_x} = \mu_{x}/2$, $\hat{\mu_y} = \mu_{y}/2.$
		
		Consider the iteration $k \geq 1$. Assuming, that $\mathbb{E}\|X^t - \overline{X^t}\| \leq \sqrt{\delta'}$ and $\mathbb{E}\|Y^t - \overline{Y^t}\| \leq \sqrt{\delta'}$ for $t=0,1,\dots,k$, we are going to prove it for $t=k+1$, using constant number of consensus iterations.

		Using Line~\ref{DAPDG:line:6} and \ref{DAPDG:line:2} of Algorithm~\ref{alg:DAPDG}, we get
		$$X_g^k = \tau_x X^k + (1-\tau_x) X_f^k = (\tau_x+(1-\tau_x)\sigma_x) X^k - (1-\tau_x)\sigma_x X^{k-1} + (1-\tau_x)X_g^{k-1}.$$
		
		Define $V^k = X_g^k-\sigma_x X^k$. Using $X_g^0 = X^0$, we get $V^0 = (1-\sigma_x) X^0, \|V^0-\overline{V^0}\|\leq(1-\sigma_x)\sqrt{\delta'}.$
		
		\begin{align*}
			V^k &= (1-\sigma_x)\tau_x X^k + (1-\tau_x)V^{k-1},
			\\
			V^k-\overline{V^k} &= (1-\sigma_x)\tau_x \left( X^k-\overline{X^k} \right) + (1-\tau_x)\left( V^{k-1}-\overline{V^{k-1}} \right),
			\\
			\E\|V^k-\overline{V^k}\|&\leq (1-\sigma_x)\tau_x\sqrt{\delta'} + (1-\tau_x)(1-\sigma_x)\sqrt{\delta'} = (1-\sigma_x)\sqrt{\delta'}.
		\end{align*}
		
		Let us now estimate $X_f^k$, $k\geq 1$. Using Line~\ref{DAPDG:line:6}, we get
		$$X_f^k = V^{k-1} + \sigma_x X^k,$$
		$$\mathbb{E}\|X_f^k - \overline{X_f^k}\| \leq \sqrt{\delta'}.$$
		
		Let us now estimate $X_g^k$ and $Y_m^k$. Using Line~\ref{DAPDG:line:2} and \ref{DAPDG:line:1}, we get
		$$\mathbb{E}\|X_g^k - \overline{X_g^k}\| \leq \sqrt{\delta'},$$
		$$\mathbb{E}\|Y_m^k - \overline{Y_m^k}\| \leq (1+2\theta)\sqrt{\delta'}.$$
		
		The estimations for $Y_g^k$, $Y_f^k$ are similar.
		
		Let us now estimate $\mathbb{E}\|U^{k+1}-\overline{U^{k+1}}\|$. Using Line~\ref{DAPDG:line:4}, we get
		
		\begin{align*}
			U^{k+1}-\overline{U^{k+1}}&=(1-\eta_x\alpha_x)\left(X^k-\overline{X^k}\right)+\eta_x\alpha_x\left( X_g^k-\overline{X_g^k}\right)
			\\&- \eta_x\beta_x A^T\left(A \left(X^k-\overline{X^k}\right)-\left(\nabla^r G(Y_g^k, \xi_{y, k})-\overline{\nabla^r G(Y_g^k, \xi_{y, k})}\right)\right)
			\\&- \eta_x\left(\left(\nabla^r F(X_g^k, \xi_{x, k})-\overline{\nabla^r F(X_g^k, \xi_{x, k}})\right) + A^T \left(Y_m^k-\overline{Y_m^k}\right)\right).
		\end{align*}
		
		Using that $\eta_x\alpha_x \leq 1$ and previous estimations, we get
		
		\begin{align*}
			\mathbb{E}\|U^{k+1}-\overline{U^{k+1}}\| &\leq (1-\eta_x\alpha_x)\sqrt{\delta'} + \eta_x\alpha_x\sqrt{\delta'} + \eta_x\beta_x L_{xy}^2\sqrt{\delta'}
			\\&+
			\eta_x\beta_x L_{xy}\E\|\nabla^r G(Y_g^k, \xi_{y, k})\|+\eta_x\mathbb{E}\|\nabla^r F(X_g^k, \xi_{x, k})\|+\eta_x L_{xy}(1+2\theta)\sqrt{\delta'}
			\\&= (1+\eta_x\beta_x L_{xy}^2+\eta_x L_{xy}(1+2\theta))\sqrt{\delta'}+\eta_x\beta_x L_{xy}\mathbb{E}\|\nabla^r G(Y_g^k, \xi_{y, k})\|
			\\&+
			\eta_x\mathbb{E}\|\nabla^r F(X_g^k, \xi_{x, k})\|.
		\end{align*}

		Getting estimations for $\E\norm{\nabla^r G(Y_g^k, \xi_{y, k})}$ and $\E\norm{\nabla^r F(X_g^k, \xi_{x, k})}$.
		
		\begin{align*}
			\E\norm{\nabla^r F(X_g^k, \xi_{x, k})} &\leq \E\norm{\nabla^r F(X_g^k, \xi_{x, k}) - \nabla F(X_g^k)} + \E\norm{\nabla F(X_g^k) - \nabla F(\ovl{X_g^k})}
			\\&+
			\E\|\nabla F(\overline{X_g^k}) - \nabla F(X^*)\| + \|\nabla F(X^*)\| 
			\\&\leq
			\left(\E\sqn{\nabla^r F(X_g^k, \xi_{x, k}) - \nabla F(X_g^k)}\right)^{\frac{1}{2}} + L_{lx}\E\norm{X_g^k-\ovl{X_g^k}}
			\\&+
			L_{x}\E\norm{\ovl{X_g^k}-X^*} + \norm{\nabla F(X^*)}
			\\&
			\leq \left(\sum_{i=1}^n\sigma_{f, i}^2/r_{f, i}\right)^{\frac{1}{2}} + L_{lx}\sqrt{\delta'} + L_{x}\sqrt{n}\E\norm{\ovl{x_g^k}-x^*} + \norm{\nabla F(X^*)}.
		\end{align*}
		
		Let us define $M_x$
		$$M_x^2=\frac{\omega}{3L_{xy}}\left( \Psi^0 +  \frac{4}{(1-\theta)^2}(\delta_x+\delta_y)+\frac{\Sigma^2}{2(1-\theta)} \right),$$
		$$\Sigma^2 = \left( \frac{1}{2L_{x}}+\frac{\omega}{L_{xy}} \right)\frac{\sigma_{F, r}^2}{n}+\left( \frac{1}{2L_{y}}+\frac{1}{L_{xy}\omega} \right)\frac{\sigma_{G, r}^2}{n}.$$
		
		We choose constants the same as in Equation~\ref{syn_th4_x} and Equation~\ref{syn_th4_y} for Algorithm \ref{alg:AV}.
		
		Now we are going to estimate $\norm{\ovl{x_g^k}-x^*}$. As we know from Equation~\ref{syn_th4_x} and from Equation \ref{c:10}
		
		$$\E\sqn{x^k - x^*}  \leq M_x^2,$$
		
		$$\E\norm{x^k - x^*} \leq \sqrt{\E\sqn{x^k - x^*}} \leq M_x.$$
		
		Let $k \geq 1$. Using Line~\ref{AV:6} and \ref{AV:2} of Algorithm \ref{alg:AV}, we get
		$$
		\overline{x_g^k} = \tau_x\overline{x^k} + (1-\tau_x)\overline{x_f^k} = (\tau_x+(1-\tau_x)\sigma_x) \overline{x^k} - (1-\tau_x)\sigma_x \overline{x^{k-1}} + (1-\tau_x)\overline{x_g^{k-1}}.
		$$
		
		Let's define $\overline{v^k} = \overline{x_g^k} - \sigma_x\overline{x^k}$ and $v^* = (1-\sigma_x)x^*$. $\overline{v^0} = (1-\sigma_x)\overline{x^0}$, therefore $\|\overline{v^0}-v^*\| \leq (1-\sigma_x)M_x$.
		
		$$\overline{v^k} = \tau_x(1-\sigma_x)\overline{x^k}+(1-\tau_x)\overline{v^{k-1}}.$$
		
		Firstly, we want to estimate $\E\|\overline{v^k}-v^*\|$.
		
		\begin{align*}
			\E\|\overline{v^k}-v^*\| &\leq \tau_x(1-\sigma_x)\E\|\overline{x^k}-x^*\|+(1-\tau_x)\E\|\overline{v^{k-1}}-v^*\|
			\\&\leq
			(\tau_x(1-\sigma_x)+(1-\tau_x)(1-\sigma_x))M_x=(1-\sigma_x)M_x.
		\end{align*}
		
		Using Line~\ref{AV:6}, we get
		
		$$\overline{x_f^k} = \overline{v^{k-1}}+\sigma_x\overline{x^k},$$
		$$\E\|\overline{x_f^k}-x^*\| \leq \E\|\overline{v^{k-1}}-v^*\|+\sigma_x\E\|\overline{x^k}-x^*\|\leq (1-\sigma_x)M_x+\sigma_x M_x = M_x.$$
		
		Let's estimate $\E\|\overline{x_g^k}-x^*\|$. Using Line~\ref{AV:1}, we get
		
		$$\E\|\overline{x_g^k}-x^*\| \leq \tau_x\E\|\overline{x^k}-x^*\|+(1-\tau_x)\E\|\overline{x_f^k}-x^*\| \leq M_x.$$
		
		Returning to $\E\norm{\nabla^r F(X_g^k, \xi_{x, k})}$
		$$\E\norm{\nabla^r F(X_g^k, \xi_{x, k})} \leq \sqrt{n\sigma_{F, r}^2} + L_{lx}\sqrt{\delta'} + L_{x}\sqrt{n}M_x + \norm{\nabla F(X^*)}.$$
		
		Let's define $M_y$
		$$M_y^2=\frac{1}{4L_{xy}\omega}\left( \Psi^0 +  \frac{4}{(1-\theta)^2}(\delta_x+\delta_y)+\frac{\Sigma^2}{2(1-\theta)} \right).$$
		
		Then we can estimate $\E\norm{\nabla^r G(Y_g^k, \xi_{y, k})}$ in a similar way
		
		$$\E\norm{\nabla^r G(Y_g^k, \xi_{y, k})} \leq \sqrt{n\sigma_{G, r}^2} + L_{ly}\sqrt{\delta'} + L_{y}\sqrt{n}M_y + \norm{\nabla G(Y^*)}.$$
		
		\begin{lemma}\label{l5}
			$$\max\left\{ \E\norm{U^{k+1}-\ovl{U^{k+1}}}, \E\norm{W^{k+1}-\ovl{W^{k+1}}} \right\} \leq D,$$
			
			where
			\begin{equation}
				D = \max\left\{ D_{x,1}\sqrt{\delta'}+D_{x,2}, D_{y,1}\sqrt{\delta'}+D_{y,2} \right\},
			\end{equation}
			
			\begin{equation}
				\begin{split}
					D_{y, 2} &=\frac{L_{xy}}{2\mu_{y}}D_{x,2}+ \frac{1}{2L_{xy}}\left(\sqrt{n\sigma_{F,r}^2} + L_{x}\sqrt{n}M_x+\|\nabla F(X^*)\| \right)
					\\&+
					\frac{1}{2\mu_{y}}\left(\sqrt{n\sigma_{G,r}^2} + L_{y}\sqrt{n}M_y+\|\nabla G(Y^*)\| \right),
				\end{split}
			\end{equation}
			
			\begin{equation}
				D_{y,1} = \frac{3}{2}+\frac{L_{xy}}{2\mu_{y}}D_{x,1}+\frac{L_{lx}}{2L_{xy}}+\frac{L_{ly}}{2\mu_{y}},
			\end{equation}
			
			\begin{equation}
				\begin{split}
					D_{x,2} &= \frac{1}{2L_{xy}}\left(\sqrt{n\sigma_{G,r}^2} + L_{y}\sqrt{n}M_y+\|\nabla G(Y^*)\| \right)
					\\&+
					\frac{1}{2\mu_{x}}\left(\sqrt{n\sigma_{F,r}^2} + L_{x}\sqrt{n}M_x+\|\nabla F(X^*)\| \right),
				\end{split}
			\end{equation}
			
			\begin{equation}
				D_{x,1} = \frac{3}{2}+\frac{L_{xy}}{2\mu_{x}}(1+2\theta)+\frac{L_{ly}}{2L_{xy}}+\frac{L_{lx}}{2\mu_{x}},
			\end{equation}
			
			\begin{equation}
				\begin{split}
					M_x^2&=\frac{\omega}{3L_{xy}}\left(\Psi^0 +  \frac{4}{(1-\theta)^2}(\delta_x+\delta_y)+\frac{\Sigma^2}{2(1-\theta)}\right),
					\\
					M_y^2&=\frac{1}{4L_{xy}\omega}\left(\Psi^0 +  \frac{4}{(1-\theta)^2}(\delta_x+\delta_y)+\frac{\Sigma^2}{2(1-\theta)}\right),
				\end{split}
			\end{equation}
			
			\begin{equation}
				\Sigma^2 = \left(\frac{1}{2L_{x}}+\frac{\omega}{L_{xy}} \right)\frac{\sigma_{F, r}^2}{n}
				+
				\left(\frac{1}{2L_{y}}+\frac{1}{L_{xy}\omega}\right)\frac{\sigma_{G, r}^2}{n}.
			\end{equation}
			
		\end{lemma}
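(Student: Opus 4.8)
The statement is the ``one–step expansion'' bound that feeds into the induction on the consensus error, so its proof is essentially a careful assembly of the estimates derived in the paragraphs immediately preceding it. The plan is to work under the standing inductive hypothesis $\E\norm{X^t-\ovl{X^t}}\le\sqrt{\delta'}$ and $\E\norm{Y^t-\ovl{Y^t}}\le\sqrt{\delta'}$ for $t=0,\dots,k$, and to show that $\E\norm{U^{k+1}-\ovl{U^{k+1}}}$ and $\E\norm{W^{k+1}-\ovl{W^{k+1}}}$ are bounded by $D_{x,1}\sqrt{\delta'}+D_{x,2}$ and $D_{y,1}\sqrt{\delta'}+D_{y,2}$ respectively, after which one takes the maximum. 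Two ingredients are imported: first, under the inductive hypothesis Lemma~\ref{lemma:3} together with Equation~\eqref{c:10} shows that Algorithm~\ref{alg:AV} satisfies the hypotheses of Theorem~\ref{th4} (with $\delta_x,\delta_y$ as defined just after Equation~\eqref{c:10} and variances $\sigma_{F,r}^2/n,\sigma_{G,r}^2/n$), so Equations~\eqref{syn_th4_x}--\eqref{syn_th4_y} give $\E\norm{\ovl{x}^k-x^*}\le M_x$ and $\E\norm{\ovl{y}^k-y^*}\le M_y$ for all $k$ (dropping the $\theta^k$ factor); second, the propagated consensus errors $\E\norm{X_g^k-\ovl{X_g^k}}\le\sqrt{\delta'}$, $\E\norm{Y_g^k-\ovl{Y_g^k}}\le\sqrt{\delta'}$, $\E\norm{Y_m^k-\ovl{Y_m^k}}\le(1+2\theta)\sqrt{\delta'}$ established via the auxiliary variables $V^k=X_g^k-\sigma_xX^k$ (and its $Y$-analogue).

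\textbf{Step 1: gradient-norm bounds.} I would first bound $\E\norm{\nabla^r F(X_g^k,\xi_{x,k})}$ by splitting into four pieces through the triangle inequality: the batch-noise term $(\E\sqn{\nabla^rF(X_g^k,\xi_{x,k})-\nabla F(X_g^k)})^{1/2}\le\sqrt{n\sigma_{F,r}^2}$ by Equation~\eqref{c:10}; the consensus term $\norm{\nabla F(X_g^k)-\nabla F(\ovl{X_g^k})}\le L_{lx}\norm{X_g^k-\ovl{X_g^k}}$ by local smoothness; the optimization term $\norm{\nabla F(\ovl{X_g^k})-\nabla F(X^*)}\le L_x\sqrt n\norm{\ovl{x_g^k}-x^*}$; and the residual $\norm{\nabla F(X^*)}$. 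Taking expectations and using $\E\norm{\ovl{x_g^k}-x^*}\le M_x$ (which follows from the Theorem~\ref{th4} bound combined with Lines~\ref{AV:2},\ref{AV:6} of Algorithm~\ref{alg:AV}, exactly as spelled out for $\ovl{x_g^k}$ above) yields $\E\norm{\nabla^r F(X_g^k,\xi_{x,k})}\le\sqrt{n\sigma_{F,r}^2}+L_{lx}\sqrt{\delta'}+L_x\sqrt n\,M_x+\norm{\nabla F(X^*)}$, and symmetrically for $\nabla^r G(Y_g^k,\xi_{y,k})$ with the $y$-subscripts and $M_y$.

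\textbf{Step 2: the $U$-update.} Subtracting the row-average from Line~\ref{DAPDG:line:4} of Algorithm~\ref{alg:DAPDG} and applying the triangle inequality, I would bound $\E\norm{U^{k+1}-\ovl{U^{k+1}}}$ using $\eta_x\alpha_x\le1$, $\eta_x\beta_xL_{xy}^2\le\tfrac12$ and $\eta_x\beta_xL_{xy}\le\tfrac1{2L_{xy}}$ (both from $\beta_x\le\tfrac1{2\eta_xL_{xy}^2}$), and $\eta_x\le\tfrac1{2\mu_x}$ (from $\alpha_x=\mu_x$ and $\eta_x\le\tfrac1{4(\mu_x+L_x\sigma_x)}$). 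Collecting the $\sqrt{\delta'}$-coefficients — $1$ from $1-\eta_x\alpha_x+\eta_x\alpha_x$, $\tfrac12$ from $\eta_x\beta_xL_{xy}^2$, $\tfrac{L_{xy}}{2\mu_x}(1+2\theta)$ from the $A^TY_m^k$ term, $\tfrac{L_{ly}}{2L_{xy}}$ and $\tfrac{L_{lx}}{2\mu_x}$ from the $L_{ly}\sqrt{\delta'}$ and $L_{lx}\sqrt{\delta'}$ pieces of the Step~1 bounds — reproduces $D_{x,1}$, while the non-$\sqrt{\delta'}$ pieces of the gradient bounds, weighted by $\eta_x\beta_xL_{xy}$ and $\eta_x$, reproduce $D_{x,2}$.

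\textbf{Step 3: the $W$-update and conclusion.} The same computation on Line~\ref{DAPDG:line:5} of Algorithm~\ref{alg:DAPDG} is slightly more involved because of the coupling term $AU^{k+1}$: here I would use $\eta_y\alpha_y\le1$, $\eta_y\beta_yL_{xy}^2\le\tfrac12$, $\eta_y\beta_yL_{xy}\le\tfrac1{2L_{xy}}$, $\eta_y\le\tfrac1{2\mu_y}$, and additionally feed in the bound on $\E\norm{U^{k+1}-\ovl{U^{k+1}}}$ from Step~2 through $\eta_yL_{xy}\E\norm{U^{k+1}-\ovl{U^{k+1}}}\le\tfrac{L_{xy}}{2\mu_y}(D_{x,1}\sqrt{\delta'}+D_{x,2})$, which is precisely where the $\tfrac{L_{xy}}{2\mu_y}D_{x,1}$ and $\tfrac{L_{xy}}{2\mu_y}D_{x,2}$ terms of $D_{y,1},D_{y,2}$ come from. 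Taking the maximum of the two bounds gives $D$. The only point requiring care — and the main ``obstacle'' — is avoiding circularity: $M_x,M_y$ rely on Theorem~\ref{th4} applied to Algorithm~\ref{alg:AV}, whose hypotheses (Assumption~\ref{ass:6}, via Lemma~\ref{lemma:3}) hold only while $\E\norm{X^t-\ovl{X^t}},\E\norm{Y^t-\ovl{Y^t}}\le\sqrt{\delta'}$ for $t\le k$; thus this lemma must be read as part of the induction, and one must use the exact definitions of $\eta_x,\eta_y,\alpha_x,\alpha_y,\beta_x,\beta_y$ rather than loose surrogates in order to land on the stated constants.
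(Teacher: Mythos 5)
Your proposal is correct and follows essentially the same route as the paper: the same four-term triangle-inequality decomposition of the gradient norms (batch noise, local smoothness on the consensus error, global smoothness on $\ovl{x_g^k}-x^*$ via $M_x$, and the residual $\norm{\nabla F(X^*)}$), the same step-size inequalities $\eta_x\alpha_x\le 1$, $\eta_x\beta_xL_{xy}^2\le\tfrac12$, $\eta_x\beta_xL_{xy}\le\tfrac{1}{2L_{xy}}$, $\eta_x\le\tfrac{1}{2\mu_x}$, and the same chaining of the $U$-bound into the $W$-bound through the $AU^{k+1}$ coupling term. Your remark about reading the lemma as part of the induction (to avoid circularity with Theorem~\ref{th4} applied to Algorithm~\ref{alg:AV}) matches the paper's standing hypothesis $\E\norm{X^t-\ovl{X^t}},\E\norm{Y^t-\ovl{Y^t}}\le\sqrt{\delta'}$ for $t\le k$.
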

		\begin{proof}
			\begin{align*}
				\E\|U^{k+1}-\overline{U^{k+1}}\| &\leq (1+\eta_x\beta_x L_{xy}^2+\eta_x L_{xy}(1+2\theta))\sqrt{\delta'}
				\\&+
				\eta_x\beta_x L_{xy}\E\|\nabla^r G(Y_g^k, \xi_{y, k})\|+\eta_x\E\|\nabla^r F(X_g^k, \xi_{x, k})\|.
			\end{align*}
			Using the definition of $\eta_x$, $\beta_x$ and estimations on gradients, we get
			\begin{align*}
				\begin{split}
					\E\|U^{k+1}-\overline{U^{k+1}}\| &\leq \left( 1+\frac{1}{2}+\frac{L_{xy}}{4\hat{\mu_x}}(1+2\theta)\right)\sqrt{\delta'}+\frac{1}{2L_{xy}}\E\|\nabla^r G(Y_g^k, \xi_{y, k})\|
					\\&+
					\frac{1}{4\hat{\mu_x}}\E\|\nabla^r F(X_g^k, \xi_{x, k})\|
					\\&\leq
					\left(\frac{3}{2}+\frac{L_{xy}}{4\hat{\mu_x}}(1+2\theta)+\frac{L_{ly}}{2L_{xy}}+\frac{L_{lx}}{4\hat{\mu_x}}\right)\sqrt{\delta'} 
					\\&+
					\frac{1}{2L_{xy}}\left(\sqrt{n\sigma_{G, r}^2} + L_{y}\sqrt{n}M_y+\|\nabla G(Y^*)\| \right)
					\\&+
					\frac{1}{4\hat{\mu_x}}\left(\sqrt{n\sigma_{F, r}^2} + L_{x}\sqrt{n}M_x+\|\nabla F(X^*)\| \right) = D_{x,1}\sqrt{\delta'}+D_{x,2}.
				\end{split}
			\end{align*}
			
			Let's estimate $\E\|W^{k+1}-\overline{W^{k+1}}\|$
			\begin{align*}
				W^{k+1}-\overline{W^{k+1}}&=(1-\eta_y\alpha_y)\left(Y^k-\overline{Y^k}\right)+\eta_y\alpha_y\left( Y_g^k-\overline{Y_g^k}\right)
				\\&- 
				\eta_y\beta_y A\left(A^T \left(Y^k-\overline{Y^k}\right)+\left(\nabla^r F(X_g^k, \xi_{x, k})-\ovl{\nabla^r F(X_g^k, \xi_{x, k})}\right)\right) 
				\\&- \eta_y\left(\left(\nabla^r G(Y_g^k, \xi_{y, k})-\ovl{\nabla^r G(Y_g^k, \xi_{y, k})}\right) - A \left(U^{k+1}-\overline{U^{k+1}}\right)\right).
			\end{align*}
			
			Using that $\eta_y\alpha_y \leq 1$ and previous estimations, we get
			
			\begin{align*}
				\E\|W^{k+1}-\overline{W^{k+1}}\| &\leq (1-\eta_y\alpha_y)\sqrt{\delta'} + \eta_y\alpha_y\sqrt{\delta'} + \eta_y\beta_y L_{xy}^2\sqrt{\delta'} 
				\\&+
				\eta_y\beta_y L_{xy}\E\|\nabla^r F(X_g^k, \xi_{x, k})\|
				\\&+
				\eta_y\E\|\nabla^r G(Y_g^k, \xi_{y, k})\|+\eta_y L_{xy}\E\|U^{k+1}-\overline{U^{k+1}}\| 
				\\&
				= (1+\eta_y\beta_y L_{xy}^2)\sqrt{\delta'}+\eta_y L_{xy}\E\|U^{k+1}-\overline{U^{k+1}}\|
				\\&+
				\eta_y\beta_y L_{xy}\E\|\nabla^r F(X_g^k, \xi_{x, k})\|
				+
				\eta_y\E\|\nabla^r G(Y_g^k, \xi_{y, k})\|.
			\end{align*}
			Using the definition of $\beta_y$, $\eta_y$ and gradient estimations, we get
			\begin{align*}
				\E\|W^{k+1}-\overline{W^{k+1}}\| &\leq \left(1+\
				\frac{1}{2}\right)\sqrt{\delta'}+\frac{L_{xy}}{4\hat{\mu_y}}\E\|U^{k+1}-\overline{U^{k+1}}\|+\frac{1}{2L_{xy}}\E\|\nabla^r F(X_g^k, \xi_{x, k})\|
				\\&+
				\frac{1}{4\hat{\mu_y}}\E\|\nabla^r G(Y_g^k, \xi_{y, k})\|
				\\&\leq 
				\left(\frac{3}{2}+\frac{L_{lx}}{2L_{xy}}+\frac{L_{ly}}{4\hat{\mu_y}}+\frac{L_{xy}}{4\hat{\mu_y}}D_{x,1} \right)\sqrt{\delta'}+\frac{L_{xy}}{4\hat{\mu_y}}D_{x,2}
				\\&+
				\frac{1}{2L_{xy}}\left(\sqrt{n\sigma_{F, r}^2} + L_{x}\sqrt{n}M_x+\|\nabla F(X^*)\| \right)
				\\&+
				\frac{1}{4\hat{\mu_y}}\left(\sqrt{n\sigma_{G, r}^2} + L_{y}\sqrt{n}M_y+\|\nabla G(Y^*)\| \right) = D_{y,1}\sqrt{\delta'}+D_{y,2}.
			\end{align*}
			
		\end{proof}
		
		Now let us estimate the number of communication steps $T$.
		
		$$(1-\lambda)^{T/\tau}\max\{\E\|W^{k+1}-\overline{W^{k+1}}\|, \E\|U^{k+1}-\overline{U^{k+1}}\| \}  \leq \delta'.$$
		
		It would we sufficient to guarantee
		$$(1-\lambda)^{T/\tau}D  \leq \delta'.$$
		
		Above inequality leads from this
		$$T \geq \frac{\tau}{\lambda}\log\left(\frac{D}{\delta'}\right).$$
		
		\paragraph{Putting the proof together}
		
		Using Lemma \ref{l1}, we get
		$$\E\sqn{\ovl{x^k} - x^*} \leq \frac{\omega}{3L_{xy}}\left( \theta^k\Psi^0 +  \frac{4}{(1-\theta)^2}(\delta_x+\delta_y)+
		\frac{\Sigma^2}{2(1-\theta)} \right),$$
		$$\E\sqn{\ovl{y^k} - y^*} \leq \frac{1}{4L_{xy}\omega}\left( \theta^k\Psi^0 + 
		\frac{4}{(1-\theta)^2}(\delta_x+\delta_y)+\frac{\Sigma^2}{2(1-\theta)} \right).$$
		
		Define several notations
		$$\nu = \max\left\{ \frac{1}{3L_{xy}}\omega, \frac{1}{4L_{xy}}\omega^{-1}  \right\}.$$
		
		Using the definition of $\Psi^k$, we get
		$$\Psi^0 = \frac{1}{\eta_x}\|x^0-x^*\|+\frac{1}{\eta_y}\|x^0-x^*\| + \frac{2}{\sigma_x}B_f(x^0,x^*)+\frac{2}{\sigma_y}B_g(y^0,y^*).$$
		
		Where $\eta_x = \min\left\{ \frac{1}{4(\hat{\mu_x} + \hat{L_x}\sigma_x)}, \frac{\omega}{4L_{xy}} \right\}$, $\eta_y = \min\left\{ \frac{1}{4(\hat{\mu_y} + \hat{L_y}\sigma_y)}, \frac{1}{4L_{xy}\omega} \right\}.$
		
		Rewriting it in terms of $L_{lx}$, $L_{x}$, $L_{ly}$, $L_{y}$, $\mu_{lx}$, $\mu_{x}$, $\mu_{ly}$, $\mu_{y}.$
		
		$\eta_x = \min\left\{ \frac{1}{2\mu_{x} + 8L_{x}\sigma_x}, \frac{\omega}{4L_{xy}} \right\}$, $\eta_y = \min\left\{ \frac{1}{2\mu_{y} + 8L_{y}\sigma_y}, \frac{1}{4L_{xy}\omega} \right\}.$
		
		$$\nu \theta^k \Psi^0 \leq \frac{\epsilon}{3}.$$
		
		It would be sufficient to take $N = k = \frac{1}{1-\theta}\log\left(\frac{3\Psi^0\nu}{\epsilon}\right)$.
		
		Finally, let us estimate the right part
		$$\delta_x, \delta_y \leq \frac{(1-\theta)^2\epsilon}{24\nu}.$$
		
		Define $E$ as $E=\frac{1}{2n}\max\left\{\frac{L_{lx}^2}{L_{x}} + \frac{2L_{lx}^2}{\mu_{x}} + L_{lx} - \mu_{lx}, \frac{L_{ly}^2}{L_{y}} + \frac{2L_{ly}^2}{\mu_{y}} + L_{ly} - \mu_{lx} \right\}.$
		
		Using definition of $\delta_x$ and $\delta_y$, we get
		$$\delta' = \frac{(1-\theta)^2\epsilon}{24E\nu}.$$
		
		Define $F_x$ and $F_y$ as $F_x = \frac{\nu}{2n(1-\theta)}\left( \frac{1}{\hat{L_x}}+\frac{\omega}{L_{xy}} \right)$, $F_y= \frac{\nu}{2n(1-\theta)}\left( \frac{1}{\hat{L_y}}+\frac{1}{L_{xy}\omega} \right).$

		Using the definitions of $\Sigma^2$, $\sigma_{F, r}^2$, $\sigma_{G, r}^2$ we get, that it would be sufficient to take $r_{f, i} = \left\lceil\frac{6F_x\sigma_{f, i}^2}{\epsilon}\right\rceil$ and  $r_{g, i} = \left\lceil\frac{6F_y\sigma_{g, i}^2}{\epsilon}\right\rceil$.
		
		Finally
		
		$$N_{comm} = NT =\mathcal{O}\left( \frac{1}{1-\theta}\kappa\log\left(\frac{\Psi^0\nu}{\epsilon}\right)\log\left(\frac{D'}{\epsilon}\right)\right),$$
		
		\begin{align*}
			N_{comp}^i &= N(r_{i, f}+r_{i, g}) 
			\\&= 2N + \mathcal{O}\left( \frac{\max\{\omega, \omega^{-1}\}}{nL_{xy}(1-\theta)^2\epsilon}\left( \left( \frac{1}{L_{x}}+\frac{\omega}{L_{xy}} \right)\sigma_{f, i}^2 + \left( \frac{1}{L_{y}}+\frac{1}{L_{xy}\omega} \right)\sigma_{g, i}^2 \right)\log\left(\frac{\Psi^0\nu}{\epsilon}\right)\right).
		\end{align*}
		
		\begin{align*}\label{ests}
			\frac{1}{1-\theta} &=
			\mathcal{O}\left(\max\left\{\sqrt{\frac{L_{x}}{\mu_{x}}}, \sqrt{\frac{L_{y}}{\mu_{y}}},\frac{L_{xy}}{\sqrt{\mu_{x}\mu_{y}}}\right\}\right),
			\\
			\omega &= \sqrt{\frac{\mu_{y}}{\mu_{x}}}, \sigma_x = \sqrt{\frac{\mu_{x}}{8L_{x}}},\sigma_y = \sqrt{\frac{\mu_{y}}{8L_{y}}},\\
			\frac{1}{1-\theta} & =
			\mathcal{O}\left(\max\left\{
			\frac{\sqrt{L_{x}L_{y}}}{\mu_{xy}},
			\frac{L_{xy}}{\mu_{xy}}\sqrt{\frac{L_{x}}{\mu_{x}}},
			\frac{L_{xy}^2}{\mu_{xy}^2}
			\right\}\right), 
			\\
			\omega &= \sqrt{\frac{\mu_{xy}^2}{2\mu_{x}L_{x}}}, \sigma_x = \sqrt{\frac{\mu_{x}}{8L_{x}}},\sigma_y =\min\left\{1,\sqrt{\frac{\mu_{xy}^2}{16L_{x}L_{y}}}\right\}, \\
			\frac{1}{1-\theta} &=
			\mathcal{O}\left(\max\left\{
			\frac{\sqrt{L_{x}L_{y}}}{\mu_{yx}},
			\frac{L_{xy}}{\mu_{yx}}\sqrt{\frac{L_{y}}{\mu_{y}}},
			\frac{L_{xy}^2}{\mu_{yx}^2}
			\right\}\right),
			\\
			\omega &= \sqrt{\frac{2\mu_{y}L_{y}}{\mu_{yx}^2}}, \sigma_x =\min\left\{1,\sqrt{\frac{\mu_{yx}^2}{16L_{x}L_{y}}}\right\},\sigma_y = \sqrt{\frac{\mu_{y}}{8L_{y}}},\\
			\frac{1}{1-\theta} &= \mathcal{O}\left(\max\left\{
			\frac{\sqrt{L_{x}L_{y}}L_{xy}}{\mu_{xy}\mu_{yx}},
			\frac{L_{xy}^2}{\mu_{yx}^2},
			\frac{L_{xy}^2}{\mu_{xy}^2}
			\right\}\right),
			\\
			\omega &= \frac{\mu_{xy}}{\mu_{yx}}\sqrt{\frac{L_{y}}{L_{x}}}, \sigma_x = \min\left\{1,\sqrt{\frac{\mu_{yx}^2}{16L_{x}L_{y}}}\right\},\sigma_y =\min\left\{1,\sqrt{\frac{\mu_{xy}^2}{16L_{x}L_{y}}}\right\}.
		\end{align*}
	\end{document}